\theoremstyle{plain}
\newtheorem{theorem}{Theorem}
\newtheorem{lemma}{Lemma}
\newtheorem{corollary}{Corollary}
\theoremstyle{definition}
\newtheorem{definition}{Definition}
\newtheorem*{remark*}{Remark}
\numberwithin{equation}{section}
\newcommand{\reals}{{\mathbb{R}}}
\newcommand{\Expect}{\mathbb{E}}
\newcommand{\expect}[1]{\mathbb{E}\left[ #1 \right]}
\newcommand{\prob}[1]{{ \mathbb{P}\left\{ #1 \right\} }}
\newcommand{\var}{\mathsf{var}}
\newcommand{\Tr}{{\rm Tr}}
\newcommand{\eg}{e.g.\xspace}
\newcommand{\ie}{i.e.\xspace}
\newcommand{\iprod}[2]{\left \langle #1, #2 \right\rangle}
\newcommand{\Iprod}[2]{\langle #1, #2 \rangle}
\newcommand{\indc}[1]{{\mathbf{1}_{\left\{{#1}\right\}}}}
\newcommand{\calE}{{\mathcal{E}}}
\newcommand{\calF}{{\mathcal{F}}}
\newcommand{\diag}{{\rm diag}}
\definecolor{darkred}{RGB}{100,0,0}
\definecolor{darkgreen}{RGB}{0,100,0}
\definecolor{darkblue}{RGB}{0,0,150}
\definecolor{red}{RGB}{255,0,0}
\newcommand{\Var}{\textrm{Var}}
\renewcommand{\P}{\operatorname{\mathbb{P}}}
\newcommand{\E}{\operatorname{\mathbb{E}}}
\renewcommand{\vec}[1]{{\boldsymbol{#1}}}
\newcommand{\vct}[1]{\bm{#1}}
\newcommand{\mtx}[1]{\bm{#1}}
\newcommand{\rank}{\operatorname{rank}}
\renewcommand{\hat}{\widehat}
\newcommand{\SBM}{{\sf SBM}\xspace }
\newcommand{\DCSBM}{{\sf DCSBM}\xspace }
\newcommand{\Pnr}{\ensuremath{ \mathbb{P}_{n, r} }}
\newcommand{\Mnr}{\ensuremath{ \mathbb{M}_{n, r} }}
\newcommand{\Yhat}{\ensuremath{ \widehat{\mtx{Y}} }}
\newcommand{\Ystar}{\ensuremath{ \mtx{Y}^* }}
\newcommand{\PsiCheck}{\ensuremath{ \widecheck{\mtx{\Psi}} }}
\newcommand{\XCheck}{\ensuremath{ \widecheck{\mtx{X}} }}
\newcommand{\silent}[1]{}
\definecolor{xl}{RGB}{200,50,120}
\definecolor{yc}{RGB}{50,150,50}
\definecolor{jx}{RGB}{50,50,200}
\begin{document}

\title{Convexified Modularity Maximization for Degree-corrected Stochastic Block Models}

\date{\today}
\author{ 
Yudong Chen\thanks{Y. Chen is
with the School of Operations Research and Information Engineering at Cornell University, Ithaca, NY. \texttt{yudong.chen@cornell.edu}.}
  \and Xiaodong Li\thanks{X. Li is with the Statistics Department at the University of California, Davis, CA.
\texttt{xdgli@ucdavis.edu}.}
   \and Jiaming Xu\thanks{J. Xu is with Department of Statistics, The Wharton School, University of Pennsylvania, Philadelphia, PA. \texttt{jiamingx@wharton.upenn.edu}.}
}

\maketitle

\begin{abstract}
The stochastic block model (\SBM) is a popular framework for studying community detection in networks.
This model is limited by the assumption that all nodes in the same community are statistically equivalent and have equal expected degrees.
The degree-corrected stochastic block model (\DCSBM) is a natural extension of \SBM that allows for degree heterogeneity within communities.
This paper proposes a \emph{convexified modularity maximization} approach for estimating the hidden communities under \DCSBM. Our approach is based on a convex programming relaxation of the classical (generalized) modularity maximization formulation, followed by a novel doubly-weighted $ \ell_1 $-norm $ k $-median procedure.
We establish non-asymptotic theoretical guarantees for both approximate clustering and perfect clustering.  Our approximate clustering results are  insensitive to the minimum degree, and hold even in sparse regime with bounded average degrees. In the special case of \SBM, these theoretical results match the best-known performance guarantees of computationally feasible algorithms. 
Numerically, we provide an efficient implementation of our algorithm, which is applied to both synthetic and real-world networks. Experiment results show that our method enjoys competitive performance compared to the state of the art in the literature.
\end{abstract}


\section{Introduction}

\label{SecIntro}

\label{sec:introduction}

Detecting communities/clusters in networks and graphs is an important subroutine in many applications across computer, social and natural sciences and engineering. A standard framework for studying community detection in a statistical setting is the stochastic block model (\SBM) proposed in~\cite{HLL1983}. Also known as the planted partition model in the computer science literature~\citep{Cordon01}, \SBM is a random graph model for generating networks from a set of underlying clusters. The statistical task is to accurately recover the underlying true clusters given a single realization of the random graph.

The versatility and analytic tractability of  \SBM have made it arguably the most popular model for studying community detections. It however falls short of abstracting a key aspect of real-world networks. In particular, an unrealistic assumption of \SBM
is that within each community, the degree distributions of each node are the same. In empirical network data sets, however, the degree distributions are often highly inhomogeneous across nodes, sometimes exhibiting a heavy tail behavior with some nodes having very high degrees (so-called hubs). At the same time, sparsely connected nodes with small degrees are also common in real networks. To overcome this shortcoming of the \SBM, the degree-corrected stochastic block model (\DCSBM) was introduced in the literature to allow for degree heterogeneity within communities, thereby providing a more flexible and accurate model of real-world networks \citep{DHM2004, KN2011}.

A number of community detection methods have been proposed based on \DCSBM, such as model-based methods and spectral methods.
Model-based methods include profile likelihood maximization and modularity maximization \citep{Newman2006modularity,KN2011}. Although these methods enjoy certain statistical guarantees \citep{ZLZ2012}, they often involve optimization over all possible partitions, which is computationally intractable. Recent work in \cite{ACBL2013,LLV14} discusses efficient solvers, but the theoretical guarantees are only established under restricted settings such as those with two communities. Spectral methods, which estimate the communities based on the eigenvectors of the graph adjacency matrix and its variants, are computationally fast. Statistical guarantees are derived for spectral methods under certain settings (see, \eg, \cite{DHM2004,CL2009,chaudhuri,qin2013regularized,LR2013,Jin2012,GLM15}), but numerical validation on synthetic and real data has not been as thorough.
One notable exception is the SCORE method proposed in \cite{Jin2012}, which achieved one of the best known clustering performance on the political blogs dataset from~\cite{AG2005}. Spectral methods are also known to suffer from inconsistency in sparse graphs \citep{KMMNSZZ2013} as well as sensitivity to outliers \citep{Cai2014robust}. We discuss other related work in details in \prettyref{sec:related}.

In this paper, we seek for a clustering algorithm that is computationally feasible, has strong statistical performance guarantees under \DCSBM, and provides competitive empirical performance.
Our approach makes use of the robustness and computational power of convex optimization. Under the standard \SBM, convex optimization has been proven to be statistically efficient under a broad range of model parameters, including the size and number of communities as well as the sparsity of the network; see e.g.\ \cite{CSX2013, ChenXu14, Vershynin14,AV2011,OH2011}. Moreover, a significant advantage of convex methods is their robustness against arbitrary outlier nodes, as is established in the theoretical framework in \cite{Cai2014robust}. There, it is also observed that their convex optimization approach leads to state-of-the-art misclassification rates in the political blogs dataset, in which the node degrees are highly heterogeneous. These observations motivate us to study whether strong theoretical guarantees under \DCSBM can be established for convex optimization-based methods.

Building on the work of~\cite{CSX2013} and~\cite{Cai2014robust}, we introduce in \prettyref{sec:methodology} a new community detection approach called Convexified Modularity Maximization (CMM). CMM is based on convexification of the elegant modularity maximization formulation, followed by a novel and computationally tractable weighted $ \ell_1 $-norm $ k $-median clustering procedure. As we show in~\prettyref{sec:theory} and~\prettyref{sec:simulation}, our approach has strong theoretical guarantees, applicable even in the sparse graph regime with bounded average degree, as well as state-of-the-art empirical performance. In both aspects our approach is comparable to or improves upon the best-known results in the literature.

\section{Problem setup and algorithms}
\label{sec:methodology}

In this section, we set up the community detection problem under \DCSBM, and describe our algorithms based on convexified modularity maximization and weighted $k$-median clustering. 

Throughout this paper, we use lower-case and upper-case bold letters such as $ \vct{u}$ and $ \mtx{U} $ to represent vectors and matrices, respectively, with $ u_i $ and $ U_{ij} $ denoting their elements. We use $ \mtx{U}_{i*} $  to denote the $ i $-th row of the matrix $ \mtx{U} $.
If all coordinates of a vector $\vct{v}$ are nonnegative, we write $\vct{v}\geq \vct{0}$. The notation $\vct{v}>\vct{0}$, as well as $\mtx{U} \geq 0$ and $\mtx{U}>0$ for matrices, are defined similarly.
For a symmetric matrix $\mtx{U} \in \mathbb{R}^{n \times n}$, we write $\mtx{U}\succ \mtx{0}$ if $ \mtx{U} $ is positive definite, and $\mtx{U}\succeq \mtx{0}$ if it is positive semidefinite. 
For any sequences $\{a_n\}$ and $\{b_n\}$,  we write $a_n  \lesssim b_n$ 
if there is an absolute constant $c>0$ such that $a_n/ b_n \le c, \forall n$, and we define $a_n \gtrsim b_n$ similarly. 

\subsection{The degree-corrected stochastic block model}
\label{sec:model}

In \DCSBM a graph $ \mathcal{G} $ is generated randomly as follows. A total of $n$ nodes, which we identify with the set $ [n] := \{1, \ldots, n\}$, are partitioned into $r$ fixed but unknown clusters $C^\ast_1, C^\ast_2 \ldots, C^\ast_r$. Each pair of distinct nodes $i \in C^*_a$ and $j \in C^*_b$ are connected by an (undirected) edge with probability $\theta_i \theta_j B_{ab} \in [0,1]$, independently of all others. Here the vector $\vct{\theta}=(\theta_1, \ldots, \theta_n)^\top \in \reals^{n}_+ $ is referred to as the \emph{degree heterogeneity parameters} of the nodes, and the symmetric matrix $\mtx{B} \in \reals^{r\times r}_+$ is called the \emph{connectivity matrix} of the clusters. Without loss of generality, we assume that $ \max_{1\le i \le n} \theta_i =1 $ (one can multiply $\vct{\theta}$ by a scalar $c$ and divide $\mtx{B}$ by $c^2$ without changing the distribution of the graph).
Note that if $\theta_i=1$ for all nodes~$i$, DCSBM reduces to the classical \SBM. Given a single realization of the resulting random graph $\mathcal{G} = ([n], E)$, the statistical goal is to estimate the true clusters $\{C^\ast_a\}_{a=1}^r$.

Before describing our algorithms,  let us first introduce some useful notation. Denote by $\mtx{A} \in \{0, 1\}^{n \times n}$ the \emph{adjacency matrix}  associated with the graph $ \mathcal{G} $, with $A_{ij}=1$ if and only if nodes $i$ and $j$ are connected. For each candidate partition of $n$ nodes into $r$ clusters, we associate it with a \emph{partition matrix} $\mtx{Y} \in \{0, 1\}^{n \times n}$, such that $Y_{ij}=1$ if and only if nodes $i$ and $j$ are assigned to the same cluster, with the convention that $ Y_{ii} = 1, \forall i $. Let $\Pnr$ be the set of all such partition matrices, and $ \Ystar $ the true partition matrix associated with the ground-truth clusters $\{C^\ast_a\}_{a=1}^r$. The notion of partition matrices plays a crucial role in the subsequent discussion.

\subsection{Generalized modularity maximization}

Our clustering algorithm is based on Newman and Girvan's classical notion of  \emph{modularity} (see, e.g., \cite{Newman2006modularity}). Given the graph adjacency matrix $\mtx{A}$ of $ n $ nodes, the modularity of a partition represented by the partition matrix $\mtx{Y} \in \bigcup_r \Pnr$, is defined as
\begin{equation}
\label{eq:modularity}
Q(\mtx{Y}) := \sum_{1 \leq i, j\leq n}\left(A_{ij}-\frac{d_{i}d_{j}}{2L}\right)Y_{ij},
\end{equation}
where $d_{i}:=\sum_{j=1}^n A_{ij}$ is the degree of node $i$, and $L= \frac{1}{2}\sum_{i=1}^n d_{i}$
is the total number of edges in the graph. The \emph{modularity maximization} approach to community detection is based on finding a partition $\mtx{Y}_{m}$ that optimizes $ Q(\mtx{Y}) $:
\begin{equation}
\label{eq:modularity1}
\mtx{Y}_{m}\leftarrow \arg\max_{\mtx{Y} \in \bigcup_r \Pnr} Q(\mtx{Y}).
\end{equation}
This standard form of modularity maximization is known to suffer from a ``resolution limit'' and cannot detect small clusters~\citep{fortunato2007resolution}. To address this issue, several authors have proposed to replace the normalization factor $\frac{1}{2L}$ by a tuning parameter~$ \lambda $ \citep{RB2006,LF2011}, giving rise to the following generalized formulation of modularity maximization:
\begin{equation}
\label{eq:modularity2}
\mtx{Y}_{m}\leftarrow \arg\max_{\mtx{Y} \in \bigcup_r \Pnr} Q_\lambda (\mtx{Y}) := \sum_{1 \leq i, j\leq n}\left(A_{ij}-\lambda d_{i}d_{j}\right)Y_{ij}.
\end{equation}

While modularity maximization enjoys several desirable statistical properties under \SBM and \DCSBM \citep{ZLZ2012, ACBL2013}, the associated optimization problems~\prettyref{eq:modularity1} and~\prettyref{eq:modularity2} are not computationally feasible due to the combinatorial constraint, which limits the practical applications of these formulations. In practice, modularity maximization is often used as a guidance for designing heuristic algorithms.

Here we take a more principled approach to computational feasibility while maintaining provable statistical guarantees:  we develop a tractable convex surrogate for the above combinatorial optimization problems, whose solution is then refined by a novel weighted $ k $-median algorithm.

\subsection{Convex relaxation}
\label{sec:cvx}

Introducing the degree vector $\vct{d}=(d_1, \ldots, d_n)^\top$, we can rewrite the generalized modularity maximization problem~\prettyref{eq:modularity2} in matrix form as
\begin{equation}
\label{eq:moduarity3}
\begin{aligned}
&\max_{\mtx{Y}} && \iprod{\mtx{Y}}{\mtx{A}- \lambda \vct{d}\vct{d}^\top}
\\
&\text{subject to} && \mtx{Y}\in \textstyle{\bigcup_r}\, \Pnr,
\end{aligned}
\end{equation}
where $ \iprod{\cdot}{\cdot} $ denotes the trace inner product between matrices.
The objective function is linear in matrix variable $\mtx{Y}$, so it suffices to convexify the combinatorial constraint $ \mtx{Y}\in \bigcup_r \Pnr $.

Recall that each matrix $ \mtx{Y} $ in $ \Pnr$ corresponds to a unique partition of $n$ nodes into~$r$ clusters. There is another algebraic representation of such a partition via a \emph{membership matrix} $\mtx{\Psi}  \in \{0,1\}^{ n \times r}$, where $\Psi _{ia}=1$ if and only if node $i$ belongs to the cluster $a$.
These two representations are related by the identity
\begin{equation}
\label{eq:decomposition}
\mtx{Y}= \mtx{\Psi}\mtx{\Psi}^\top,
\end{equation}
which implies that $\mtx{Y}\succeq \mtx{0}$. The membership matrix of a partition is only unique up to permutation of the cluster labels $1, 2, \ldots, r$, so each partition matrix $\mtx{Y}$ corresponds to multiple membership matrices $\mtx{\Psi}$.
We use $ \Mnr $ to denote the set of all possible membership matrices of $ r $-partitions.

Besides being positive semidefinite, a partition matrix $\mtx{Y}$ also satisfies the linear constraints $ 0 \le Y_{ij} \le 1 $ and $Y_{ii}=1$ for all $ i,j \in [n] $. Using these properties of partition matrices, we obtain the following convexification of the modularity optimization problem~\prettyref{eq:moduarity3}:
\begin{equation}
\label{eq:cvx}
\begin{aligned}
\Yhat =\arg\;
&\max_{\mtx{Y}} && \iprod{ \mtx{Y} }{ \mtx{A}- \lambda \vct{d}\vct{d}^\top }
\\
&\text{subject to} && \mtx{Y}\succeq \mtx{0},
\\
&~              && \mtx{0} \leq \mtx{Y} \leq \mtx{J},
\\
&~             &&  Y_{ii} = 1,\quad\text{for~each~} i \in [n].
\end{aligned}
\end{equation}
Here $\mtx{J}$ is the $n \times n$ matrix with all entries equal to $1$. Implementation of the formulation~\prettyref{eq:cvx} requires choosing an appropriate tuning parameter $\lambda$. We will discuss the theoretical range for $\lambda$ for consistent clustering in \prettyref{sec:theory}, and empirical choice of $ \lambda $ in \prettyref{sec:simulation}. As our convexification is based on the generalized version~\prettyref{eq:modularity2} of modularity maximization, it is capable of detecting small clusters, even when the number of clusters $ r $ grows with $ n $, as is shown later.

\subsection{Explicit clustering via weighted $k$-median}
\label{sec:kmedian}

Ideally, the optimal solution $ \Yhat $ to the convex relaxation~\prettyref{eq:cvx} is a valid partition matrix in~$\Pnr $ and recovers the true partition $ \Ystar $ perfectly --- our theoretical results in Section~\ref{sec:theory} characterize when this is the case. In general, the matrix $ \Yhat $ will not lie in $ \Pnr $, but we expect it to be close to $ \Ystar $. To extract an explicit clustering from $ \Yhat $, we introduce a novel and tractable \emph{weighted} $k$-median algorithm.

Recall that by definition, the $ i $-th and $ j $-th rows of the true partition matrix $ \Ystar $ are identical if the corresponding nodes $ i $ and $ j $ belong to the same community, and otherwise orthogonal to each other. If $\Yhat$ is close to $\Ystar$, intuitively one can extract a good partition by clustering the row vectors of $\Yhat$ as points in the Euclidean space $ \reals^n $.
While there exist numerous algorithms (e.g., $ k $-means) for such a task, our analysis identifies a particularly viable choice --- a $ k $-median procedure appropriately weighted by the node degrees --- that is efficient both theoretically and empirically.

Specifically, our weighted $ k $-median procedure consists of two steps. First, we multiply the columns of $\Yhat$ by the corresponding degrees to obtain the matrix $\widehat{\mtx{W}}:= \Yhat\mtx{D}$, where $\mtx{D}:=\diag(\vct{d})=\diag(d_1, \ldots, d_n)$, which is the diagonal matrix formed by the entries of $\vct{d}$. Clustering is performed on the row vectors of $\widehat{\mtx{W}}$ instead of $ \Yhat $.
Note that if we consider the $ i $-th row of $ \Yhat $ as a vector of $ n $ features for node~$ i $, then the rows of $ \widehat{\mtx{W}} $ can be thought of as vectors of \emph{weighted} features.

In the second step, we implement a weighted $k$-median clustering on the row vectors of $ \widehat{\mtx{W}} $. Denoting by $\hat{\vct{w}}_i$ the $ i $-th row of $ \widehat{\mtx{W}} $, we search for a partition $C_1, \ldots, C_r$ of $ [n] $ and $r$ cluster centers $\vct{x}_1, \ldots, \vct{x}_r \in \mathbb{R}^n$ that minimize the sum of the weighted distances in $ \ell_1 $ norm
\[
\sum_{1 \leq a \leq r} \sum_{i \in C_a} d_i \| \hat{\vct{w}}_i - \vct{x}_a\|_1.
\]
Additionally, we require that the center vectors $\vct{x}_1, \ldots, \vct{x}_r$ are chosen from the row vectors of $\widehat{\mtx{W}}$ (these centers are sometimes called \emph{medoids}).

Representing the partition $ \{C_a\}_{a=1}^r $ by a membership matrix $ \mtx{\Psi} \in \Mnr $ and the centers $ \{ \vct{x}_i \}$ as the rows of a matrix $ \mtx{X} \in \mathbb{R}^{r \times n}$, we may write the above two-step procedure compactly as
\begin{equation}
\label{eq:k-median}
\begin{aligned}
&\min_{\mtx{\Psi}, \mtx{X}} &&~\|\mtx{D}(\mtx{\Psi}\mtx{X} - \widehat{\mtx{W}})\|_1
\\
&s.t. && ~\mtx{\Psi} \in \Mnr ,
\\
&~&& ~\mtx{X} \in \mathbb{R}^{r \times n} ,~\text{Rows}(\mtx{X}) \subseteq \text{Rows}(\widehat{\mtx{W}}),
\end{aligned}
\end{equation}
where $\text{Rows}(\mtx{Z})$ denotes the collection of row vectors of a matrix $\mtx{Z}$, and $\|\mtx{Z}\|_1$ denotes the sum of the absolute values of all entries of $\mtx{Z}$.

We emphasize that the formulation~\prettyref{eq:k-median} differs from standard clustering algorithms (such as $ k $-means) in a number of ways. The objective function is the sum of distances rather than that of squared distances (hence $ k $-median), and the distances are in $ \ell_1 $ instead of $ \ell_2 $ norms. Moreover, our formulation has two levels of weighting: each column of $ \Yhat $ is weighted to form $ \widehat{\mtx{W}} $, and the distance of each row $ \vct{w}_i $ to its cluster center is further weighted by $ d_i $.
This doubly-weighted $ \ell_1 $-norm $ k $-median formulation is crucial in obtaining strong and robust statistical bounds, and is significantly different from previous approaches, such as those in~\cite{LR2013,GLM15} (which only use the second weighting, and the weights are \emph{inversely} proportional to $d_i $).
Our double weighting scheme is motivated by the observation that nodes with larger degrees tend to be clustered more accurately --- in particular our analysis of the convex relaxation~\prettyref{eq:moduarity3} naturally leads to a doubly weighted error bound on its solution $ \Yhat $.
On the one hand, for each given $i$, $\hat{Y}_{ij}$ is expected to be closer to $Y^*_{ij}$ if the degree of node $j$ is larger,
so we multiply $\hat{Y}_{ij}$ by $d_j$ for every~$j$ to get the weighted feature vector. On the other hand, the $i$-th
row of $\hat{\mtx{Y} }\mtx{D}$ is closer to the $i$-the row of $\mtx{Y}^*\mtx{D}$ if the degree of node $i$ is larger, hence we
minimize the distances weighted by $d_i$.

With the constraint $\text{Rows}(\mtx{X}) \subseteq \text{Rows}(\widehat{\mtx{W}}) $, the optimization problem~\prettyref{eq:k-median} is precisely the weighted $ \ell_1 $-norm $ k $-median (also known as $ k $-medoid) problem considered in \cite{Charikar02}. Computing the exact optimizer   to~\prettyref{eq:k-median}, denoted by $(\overline{\mtx{\Psi}}, \overline{\mtx{X}})$, is NP-hard.  Nevertheless, \cite{Charikar02} provides a polynomial-time approximation algorithm, which outputs a solution $(\PsiCheck, \XCheck) \in \Mnr \times \mathbb{R}^{r\times n}$ feasible to \prettyref{eq:k-median} and provably satisfying
\begin{align}
\label{eq:approximation}
\|\mtx{D}( \widecheck{ \mtx{\Psi}} \XCheck  - \widehat{\mtx{W}})\|_1 \le \frac{20}{3} \|\mtx{D}( \overline{\mtx{\Psi} } \; \overline{ \mtx{X} } - \widehat{\mtx{W}})\|_1 .
\end{align}

As the solution $ \Yhat $ to the convex relaxation~\prettyref{eq:cvx} and the approximate solution $ \PsiCheck $ to the $ k $-median problem~\prettyref{eq:k-median} can both be computed in polynomial-time,  our algorithm is computationally tractable. In the next section, we  turn to the statistical aspect and show that the clustering induced by $ \Yhat $ and $ \PsiCheck $ is close to the true underlying clusters, under some mild and interpretable conditions of \DCSBM.

\section{Theoretical results}
\label{sec:theory}

In this section, we provide theoretical results characterizing the statistical properties of our algorithm. 
We show that under mild conditions of \DCSBM, the difference between the convex relaxation solution $ \Yhat $ and the true partition matrix $ \Ystar $,
and the difference between the approximate $ k $-median clustering $ \PsiCheck $ and the true clustering $\mtx{\Psi}^*$, are well bounded.
When additional conditions hold, we further show that $ \Yhat $  perfectly recovers the true clusters.
Our results are non-asymptotic in nature, valid for any scaling of $ n$, $r$, $\vct{\theta}$ and $\mtx{B}$ etc.

\subsection{Density gap conditions}
\label{sec:gap}
In the literature of community detection by convex optimization under standard \SBM,  it is often assumed that the minimum within-cluster edge density is greater than the maximum cross-cluster edge density, i.e.,
\begin{equation}
\label{eq:DGC}
\max_{1 \leq a < b \leq r} {B_{ab}}  < \min_{1 \leq a \leq r} {B_{aa} } .
\end{equation}
See for example~\cite{CSX2013,OH2011,AV2011, Cai2014robust, Vershynin14}.
This requirement~\prettyref{eq:DGC} can be directly extended to the \DCSBM setting, leading to the condition
\begin{equation}
\label{eq:SDGC}
 \max_{1 \leq a < b \leq r} \max_{i \in C_a^*, j \in C_b^*} B_{ab} \theta_i \theta_j < \min_{1 \leq a \leq r} \min_{i, j \in C_a^*, i \neq j} B_{aa}\theta_{i}\theta_{j} .
\end{equation}
Under \DCSBM, however, this condition would often be overly restrictive, particularly when the degree parameters $ \{\theta_i\} $ are imbalanced with some of them being very small. In particular, this condition  is highly sensitive to the minimum value $\theta_{\min} :=\min_{1\leq i \leq n} \theta_i,$ which is unnecessary since the community memberships of nodes with larger $ \theta_i $ may still be recoverable.

Here we instead consider a version of the density gap condition that is much milder and more appropriate for \DCSBM. For each cluster index $1 \leq a \leq r$, define the quantities
\begin{equation}
\label{eq:GandH}
G_a := \sum_{i \in C^\ast_a} \theta_i \text{~~~~and~~~~} H_a := \sum_{b=1}^r B_{ab}G_b.
\end{equation}
Simple calculation gives
\[
\E d_i = \theta_i H_a - \theta_i^2 B_{aa} \approx \theta_i H_a.
\]
Therefore, the quantity $H_a$ controls the \emph{average degree} of the nodes in the $ a $-th cluster.
With this notation, we impose the condition that
\begin{equation}
\label{eq:dcgap}
\max_{1 \leq a < b \leq r} \frac{B_{ab}}{H_a H_b} < \min_{1 \leq a \leq r} \frac{B_{aa}}{H_a^2}
\end{equation}
We refer to the condition~\prettyref{eq:dcgap} as the \emph{degree-corrected density gap condition}. This condition can be viewed as the ``average'' version of~\prettyref{eq:SDGC}, as it depends on the aggregate quantity~$ H_a $ associated with each cluster $ a $ rather than the $ \theta_i $'s of individual nodes in the cluster --- in particular, the condition~\prettyref{eq:dcgap} is robust against small $\theta_{\min}$. This condition plays a key role throughout our theoretical analysis, for both approximate and exact cluster recovery under \DCSBM.

To gain intuition on the new degree-corrected density gap condition~\prettyref{eq:dcgap}, consider the following sub-class of \DCSBM with symmetric/balanced clusters.
\begin{definition}
We say that a \DCSBM obeys a $\mathcal{F}(n, r, p, q, g)$-model, if $B_{aa} = p$ for all $a=1, \ldots, r$, $B_{ab}=q$ for all $1\leq a<b \leq r$, and $G_1 = G_2 = \cdots = G_r = g$.
\end{definition}
In a $\mathcal{F}(n, r, p, q, g)$-model, the true clusters are balanced in terms of the connectivity matrix $ \mtx{B} $ and the sum of the degree heterogeneity parameters (rather than the cluster size). Under this model, straightforward calculation gives $H_a = ((r-1)q + p)g$ for all $a = 1 \ldots r$. The degree-corrected density gap condition \prettyref{eq:dcgap} then reduces to $p>q$, i.e., the classical density gap condition \prettyref{eq:DGC}.

\subsection{Theory of approximate clustering}
\label{sec:approx}

We now study when the solutions to our convex relaxation~\prettyref{eq:cvx} and weighted $ k $-median algorithms~\prettyref{eq:k-median} approximately recover the underlying true clusters.
Under \DCSBM, nodes with different $ \theta_i $'s have varying degrees, and therefore contribute differently to the overall graph and in turn to the clustering quality. Such heterogeneity needs to be taken into account in order to get tight bounds on clustering errors. The following version of $ \ell_1 $ norm, corrected by the degree heterogeneity parameters, is the natural notion of an error metric:
\begin{definition}
\label{def:weighted_norm}
For each matrix $\mtx{Z} \in \mathbb{R}^{n \times n}$, its weighted element-wise $ \ell_1 $ norm is defined as
\[
\|\mtx{Z}\|_{1, \vct{\theta}}:= \sum_{1 \leq i, j \leq n} |\theta_i Z_{ij} \theta_j| .
\]
\end{definition}
\noindent Also recall our definitions of $H_a$ and $G_a$ in equation~\prettyref{eq:GandH}. Furthermore, for each $1\leq a \leq r$ and $i \in C^*_a$, define the quantity $f_i := \theta_i H_a$, which corresponds approximately to the expected degree of node $ i $ and satisfies
$\|\vct{f}\|_1 = \sum_{a, b} B_{ab} G_a G_b$.

With the notation above, our first theorem shows that the convex relaxation solution~$ \Yhat $ is close to the true partition matrix~$ \Ystar $ in terms of the weighted $ \ell_1 $ norm.
\begin{theorem}
\label{thm:approx}
Under \DCSBM, assume that the degree-corrected density gap condition \prettyref{eq:dcgap} holds. Moreover, suppose that the tuning parameter $\lambda$ in the convex relaxation~\prettyref{eq:cvx} satisfies
\begin{equation}
\label{eq:RDGC}
\max_{1 \leq a < b \leq r} \frac{B_{ab} + \delta}{H_a H_b} < \lambda < \min_{1 \leq a \leq r} \frac{B_{aa} - \delta}{H_a^2}
\end{equation}
for some number $\delta>0$. Then with probability at least $0.99 - 2(e/2)^{-2n}$, the solution~$ \widehat{ \mtx{Y} } $ to \prettyref{eq:cvx} satisfies the bound
\begin{equation}
\label{eq:pillar1}
\|\Ystar - \widehat{ \mtx{Y} } \|_{1, \vct{\theta}} \leq \frac{C_0}{\delta}\left( 1+ \left(\min_{1 \leq a \leq r} \frac{B_{aa}}{H_a^2}\right)  \|\vct{f}\|_1 \right)\left(  \sqrt{n\|\vct{f}\|_1 }  + n \right),
\end{equation}
where $ C_0 >0 $  is an absolute constant.
\end{theorem}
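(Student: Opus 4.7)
The plan is to exploit the SDP optimality of $\widehat{\mtx{Y}}$ together with Grothendieck's inequality, following the strategy pioneered by Gu\'edon--Vershynin for the SBM but adapted to the modularity objective and to the weighted norm $\|\cdot\|_{1,\vct{\theta}}$. Define the noiseless surrogate $\mtx{M}^\ast$ by $M^\ast_{ij}=\theta_i\theta_j(B_{a(i)a(j)}-\lambda H_{a(i)}H_{a(j)})$ for $i\ne j$ (any diagonal value works, as it is annihilated by the unit-diagonal constraint), and set $\widehat{\mtx{M}}=\mtx{A}-\lambda\vct{d}\vct{d}^\top$. Since $\mtx{Y}^\ast$ is feasible for~\prettyref{eq:cvx} and $\widehat{\mtx{Y}}$ is optimal, $\iprod{\widehat{\mtx{Y}}-\mtx{Y}^\ast}{\widehat{\mtx{M}}}\ge 0$, which rearranges into the master inequality $\iprod{\mtx{Y}^\ast-\widehat{\mtx{Y}}}{\mtx{M}^\ast}\le\iprod{\widehat{\mtx{Y}}-\mtx{Y}^\ast}{\widehat{\mtx{M}}-\mtx{M}^\ast}$. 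The assumption \prettyref{eq:RDGC} forces $M^\ast_{ij}\ge\delta\theta_i\theta_j$ for within-cluster pairs and $M^\ast_{ij}\le-\delta\theta_i\theta_j$ for between-cluster pairs; together with $Y^\ast_{ij}\in\{0,1\}$, $\widehat Y_{ij}\in[0,1]$, and agreement on the diagonal, a sign-by-sign argument yields the deterministic lower bound
\[
\iprod{\mtx{Y}^\ast-\widehat{\mtx{Y}}}{\mtx{M}^\ast}\;\ge\;\delta\,\|\mtx{Y}^\ast-\widehat{\mtx{Y}}\|_{1,\vct{\theta}}.
\]

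The core of the argument is to upper bound the stochastic right-hand side. Decompose $\widehat{\mtx{M}}-\mtx{M}^\ast=(\mtx{A}-\E\mtx{A})-\lambda(\vct{d}\vct{d}^\top-\vct{f}\vct{f}^\top)$, modulo lower-order corrections on the diagonal and from $\E\vct{d}\ne\vct{f}$. Since both $\widehat{\mtx{Y}}$ and $\mtx{Y}^\ast$ are PSD with unit diagonal, Grothendieck's inequality supplies the bound
\[
\bigl|\iprod{\widehat{\mtx{Y}}-\mtx{Y}^\ast}{\mtx{Z}}\bigr|\;\le\;2K_G\,\max_{\vct{x},\vct{y}\in\{\pm 1\}^n}|\vct{x}^\top\mtx{Z}\vct{y}|
\]
for every symmetric $\mtx{Z}$. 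For $\mtx{Z}=\mtx{A}-\E\mtx{A}$, a Bernstein inequality for the centered Bernoulli sum $\vct{x}^\top(\mtx{A}-\E\mtx{A})\vct{y}$ combined with a union bound over the $4^n$ sign patterns yields $|\vct{x}^\top(\mtx{A}-\E\mtx{A})\vct{y}|\lesssim\sqrt{n\|\vct{f}\|_1}+n$ with probability at least $1-(e/2)^{-2n}$. For $\mtx{Z}=\vct{d}\vct{d}^\top-\vct{f}\vct{f}^\top$, I would write $\vct{d}=\vct{f}+\vct{\xi}$ and expand
\[
\vct{x}^\top(\vct{d}\vct{d}^\top-\vct{f}\vct{f}^\top)\vct{y}=(\vct{x}^\top\vct{f})(\vct{\xi}^\top\vct{y})+(\vct{x}^\top\vct{\xi})(\vct{f}^\top\vct{y})+(\vct{x}^\top\vct{\xi})(\vct{\xi}^\top\vct{y}),
\]
bound $|\vct{x}^\top\vct{f}|\le\|\vct{f}\|_1$ deterministically, and bound $\max_{\vct{y}\in\{\pm 1\}^n}|\vct{\xi}^\top\vct{y}|\le\|\vct{\xi}\|_1\le\sqrt{n}\,\|\vct{\xi}\|_2\lesssim\sqrt{n\|\vct{f}\|_1}$ via Cauchy--Schwarz together with $\E\|\vct{\xi}\|_2^2=\sum_i\Var(d_i)\le\|\vct{f}\|_1$ and Markov's inequality. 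Collecting the three pieces gives an upper bound of order $\|\vct{f}\|_1\sqrt{n\|\vct{f}\|_1}+n\|\vct{f}\|_1$; multiplying by $\lambda\le\min_a B_{aa}/H_a^2$ produces exactly the prefactor $(\min_a B_{aa}/H_a^2)\|\vct{f}\|_1(\sqrt{n\|\vct{f}\|_1}+n)$ appearing in \prettyref{eq:pillar1}.

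Pitting the deterministic lower bound against the stochastic upper bound and dividing through by $\delta$ gives the advertised estimate. The hardest step is the treatment of the rank-one perturbation $\vct{d}\vct{d}^\top-\vct{f}\vct{f}^\top$: a crude operator-norm bound would scale like $n\|\vct{f}\|_1$ and would ruin the argument in the sparse regime, whereas the Grothendieck expansion into pieces that are linear in $\vct{\xi}$ recovers the correct $\sqrt{n\|\vct{f}\|_1}+n$ scaling, with the factor $(1+\lambda\|\vct{f}\|_1)$ recording precisely how the modularity penalty couples to the degree fluctuations. Minor but essential bookkeeping remains to verify that the diagonal and $\E\vct{d}\ne\vct{f}$ discrepancies are of lower order, and to consolidate the Bernstein, Markov, and union-bound tail probabilities into the form $0.99-2(e/2)^{-2n}$ absorbable into the absolute constant $C_0$.
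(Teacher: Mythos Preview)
Your proposal is correct and follows essentially the same route as the paper: the optimality inequality is split into the deterministic piece $\langle\widehat{\mtx{Y}}-\mtx{Y}^\ast,\E\mtx{A}-\lambda\vct{f}\vct{f}^\top\rangle\le-\delta\|\mtx{Y}^\ast-\widehat{\mtx{Y}}\|_{1,\vct{\theta}}$, the degree-fluctuation piece $\lambda\langle\widehat{\mtx{Y}}-\mtx{Y}^\ast,\vct{f}\vct{f}^\top-\vct{d}\vct{d}^\top\rangle$ handled by Grothendieck plus a Markov bound on $\|\vct{f}-\vct{d}\|_1$, and the noise piece $\langle\widehat{\mtx{Y}}-\mtx{Y}^\ast,\mtx{A}-\E\mtx{A}\rangle$ handled by Grothendieck, Bernstein, and a union bound over $\{\pm 1\}^n$. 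The only cosmetic difference is that the paper bounds $\|\vct{f}-\vct{d}\|_1$ via the first moment $\E|d_i-f_i|\le 1+\sqrt{f_i}$ rather than your Cauchy--Schwarz plus second-moment route, and packages the degree-fluctuation estimate as a separate lemma on $\|\vct{f}\vct{f}^\top-\vct{d}\vct{d}^\top\|_{\infty\to 1}$; both variants give the same $(\sqrt{n\|\vct{f}\|_1}+n)\|\vct{f}\|_1$ bound.
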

\noindent We prove this claim in~\prettyref{sec:proof_approx}. The bound~\prettyref{eq:pillar1} holds with probability close to one. Notably, it is insensitive to $\theta_{\min}$ as should be expected, because community memberships of  nodes with relatively large $\theta_i$
are still recoverable. In contrast, the error bounds of several existing methods, such as that of SCORE method in \cite[eq.\ (2.15), (2.16)] {Jin2012}, depend on $\theta_{\min}$ crucially.

Under the $\mathcal{F}(n, r, p, q, g)$-model, recall that  $ H_a \equiv (p+(r-1)q)g$ and density gap condition~\prettyref{eq:dcgap} becomes $p>q$. Moreover, the constraint \prettyref{eq:RDGC} for $\delta$ and $\lambda$ becomes
\begin{equation}
\label{eq:RDGC2}
p-q > 2\delta \quad \text{and} \quad \frac{q+\delta}{(p+(r-1)q)^2g^2} < \lambda  < \frac{p - \delta}{(p+(r-1)q)^2g^2}.
\end{equation}
Note that the first inequality above is the same as the standard density gap condition imposed in, for example, \cite{CSX2013, ChenXu14, Cai2014robust}. Furthermore, the vector $ \vct{f} $ satisfies $\|\vct{f}\|_1 = r(p+(r-1)q)g^2 \le r^2 p g^2$. Substituting these expressions into the bound~\prettyref{eq:pillar1}, we obtain the following corollary for the symmetric \DCSBM setting.
\begin{corollary}
\label{cor:Fmodel}
Under the  $\mathcal{F}(n, r, p, q, g)$-model of \DCSBM, if the condition~\prettyref{eq:RDGC2} holds for the density gap and tuning parameter, then with probability at least $0.99 - 2(e/2)^{-2n}$, the solution~$ \widehat{ \mtx{Y} } $ to the convex relaxation~\prettyref{eq:cvx} satisfies the bound
\begin{equation}
\label{eq:pillar2}
\|\Ystar - \widehat{ \mtx{Y} } \|_{1, \vct{\theta}}  \lesssim \frac{1}{\delta} \left( 1+ \frac{r p}{ ( p+(r-1)q ) } \right)  (n + rg\sqrt{np})
\lesssim \frac{1}{\delta} r (n + rg\sqrt{np}).
\end{equation}
\end{corollary}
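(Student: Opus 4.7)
The plan is to obtain the corollary by plugging the parameters of the $\mathcal{F}(n,r,p,q,g)$-model directly into Theorem~\ref{thm:approx}, so no new probabilistic argument is needed; the work is purely algebraic simplification of the quantities $H_a$, $\|\vct{f}\|_1$, and $\min_a B_{aa}/H_a^2$ that appear in the general bound \prettyref{eq:pillar1}.

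First I would compute the model-specific constants. Under $\mathcal{F}(n,r,p,q,g)$ we have $G_a\equiv g$ and
\[
H_a \;=\; \sum_{b=1}^r B_{ab}G_b \;=\; p g + (r-1)qg \;=\; (p+(r-1)q)g,
\]
so $\min_a B_{aa}/H_a^2 = p/((p+(r-1)q)^2 g^2)$ and $\max_{a<b} B_{ab}/(H_aH_b)=q/((p+(r-1)q)^2 g^2)$. Hence the degree-corrected density gap condition \prettyref{eq:dcgap} collapses to $p>q$, and the admissible range \prettyref{eq:RDGC} for $(\lambda,\delta)$ is exactly the one in \prettyref{eq:RDGC2}. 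In particular, all hypotheses of Theorem~\ref{thm:approx} hold under the assumptions of the corollary, so \prettyref{eq:pillar1} applies with probability at least $0.99 - 2(e/2)^{-2n}$.

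Next I would compute $\|\vct{f}\|_1$. Since $f_i=\theta_i H_a$ on $C^*_a$ and $\sum_{i\in C^*_a}\theta_i=G_a=g$, we get
\[
\|\vct{f}\|_1 \;=\; \sum_{a=1}^{r} H_a G_a \;=\; r\,(p+(r-1)q)\,g^2 \;\le\; r^2 p g^2,
\]
using $q\le p$. This bound yields $\sqrt{n\|\vct{f}\|_1}\le rg\sqrt{np}$ and also
\[
\Bigl(\min_{a}\tfrac{B_{aa}}{H_a^2}\Bigr)\|\vct{f}\|_1 \;=\; \frac{p}{(p+(r-1)q)^2 g^2}\cdot r(p+(r-1)q)g^2 \;=\; \frac{rp}{p+(r-1)q}.
\]
Substituting these two identities into \prettyref{eq:pillar1} gives the first inequality of \prettyref{eq:pillar2}. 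For the second, I would note that $\frac{rp}{p+(r-1)q}\le \frac{rp}{p}=r$ (since $q\ge 0$), so $1+\frac{rp}{p+(r-1)q}\le 1+r \lesssim r$, and this weakens the bound to the stated $\frac{1}{\delta}\,r(n+rg\sqrt{np})$.

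There is no real obstacle here beyond careful bookkeeping: the only place where an inequality (rather than an identity) is used is in controlling $\|\vct{f}\|_1$ by $r^2 p g^2$ and in bounding $1+\frac{rp}{p+(r-1)q}$ by $r$, both of which are trivial consequences of $q\le p$ and $r\ge 1$. Everything else is direct substitution, and the probability and absolute-constant factors are inherited verbatim from Theorem~\ref{thm:approx}.
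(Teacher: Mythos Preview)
Your proposal is correct and follows exactly the same approach as the paper: the paper derives the corollary by the same substitution, noting that under the $\mathcal{F}(n,r,p,q,g)$-model one has $H_a\equiv(p+(r-1)q)g$, $\|\vct{f}\|_1=r(p+(r-1)q)g^2\le r^2pg^2$, and that \prettyref{eq:RDGC} reduces to \prettyref{eq:RDGC2}, then plugs these into \prettyref{eq:pillar1}. Your write-up in fact supplies more of the intermediate algebra (the explicit evaluation of $(\min_a B_{aa}/H_a^2)\|\vct{f}\|_1$ and the bound $1+\frac{rp}{p+(r-1)q}\le 1+r\lesssim r$) than the paper does.
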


Note that if $ \frac{p}{q}  = c$ for an absolute constant $ c$, then the bound~\prettyref{eq:pillar2} takes the simpler form $ \|\Ystar - \widehat{ \mtx{Y} } \|_{1, \vct{\theta}}  \lesssim \frac{n + rg\sqrt{np}}{\delta} $. If $\theta_i=1$ for all nodes $i$, the $\mathcal{F}(n, r, p, q, g)$-model reduces to the standard \SBM with equal community size. If we assume $r=O(1)$ additionally, and note that $g=n/r$ and let $\delta = \frac{p-q}{4}$, then the error bound \prettyref{eq:pillar2} becomes
\begin{align*}
 \|\Ystar - \widehat{ \mtx{Y} } \|_{1}  \lesssim \frac{n( 1 + \sqrt{np} )  }{p-q}.
\end{align*}
This bound matches the error bounds proved in \cite[Theorem 1.3]{Vershynin14}. \\

The output $ \Yhat $ of the convex relaxation need not be a partition matrix corresponding to a clustering; a consequence is that the theoretical results in~\cite{Vershynin14} do not provide an explicit guarantee on clustering errors (except for the special case of $ r=2 $). We give such a bound below, based on the explicit clustering extracted from $ \Yhat $ using the weighted $ \ell_1 $-norm $ k $-median algorithm~\prettyref{eq:k-median}. Recall that $\PsiCheck$ is the membership matrix in the approximate $ k $-median solution given in~\prettyref{eq:approximation}, and let $\mtx{\Psi}^\ast$ be the membership matrix corresponding to the true clusters. A membership matrix is unique only up to permutation of its columns (i.e., relabeling the clusters), so counting the misclassified nodes in $ \PsiCheck $ requires an appropriate minimization over such permutations. The following definition is useful to this end. For a matrix $\mtx{M}$,  let $\mtx{M}_{i \bullet}$ denote its $i$-th row vector. 

\begin{definition}
\label{def:calE}
Let $\mathcal{S}_r$ denote the set of all $r \times r$ permutation matrices. The set of misclassified nodes with respect to a permutation matrix $\mtx{\Pi} \in \mathcal{S}_r $ is defined as
\begin{align*}
\mathcal{E}(\mtx{\Pi}) := \big\{ i \in [n] : \big( \PsiCheck \mtx{\Pi} \big)_{i \bullet} \neq \mtx{\Psi}^\ast_{i \bullet} \big\}.
\end{align*}
\end{definition}

\noindent With this definition, we have the following theorem that quantifies the misclassification rate of approximate $ k $-median solution $ \PsiCheck $.
\begin{theorem}
\label{thm:kmedian}
Under the $\mathcal{F}(n, r, p, q, g)$-model, assume that the parameters $\delta$ and $\lambda$ satisfy \prettyref{eq:RDGC2}. Then with probability at least $0.99 - 2(e/2)^{-2n}$, the approximate $ k $-median solution $ \PsiCheck $ satisfies the bound
\begin{align}
\label{eq:Sbound}
\min_{\mtx{\Pi} \in \mathcal{S}_r}  \Big\{  \textstyle{\sum_{i \in \mathcal{E}(\mtx{\Pi})} \theta_{i} } \Big\} 
&\leq C_0\frac{r}{\delta} \left( \frac{n}{g} + r\sqrt{np} \right)
\end{align}
for some absolute constant $C_0$.
\end{theorem}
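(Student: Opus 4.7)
The plan is to translate the entrywise bound on $\Ystar - \Yhat$ provided by Corollary~\ref{cor:Fmodel} into a clustering-error bound on $\PsiCheck$, using the $\frac{20}{3}$-approximation guarantee \prettyref{eq:approximation} of the $k$-median solver together with a cluster-separation argument. First I introduce the ``ideal'' matrix $\mtx{W}^\ast := \Ystar \mtx{D}$: its $i$-th row is the vector $\vct{v}_{a(i)}$ with $j$-th coordinate $d_j\,\mathbf{1}_{\{j \in C^\ast_{a(i)}\}}$, so rows within the same true cluster coincide and rows across distinct clusters have disjoint support, yielding the separation $\|\vct{v}_a - \vct{v}_b\|_1 = \sum_{j \in C^\ast_a} d_j + \sum_{j \in C^\ast_b} d_j$. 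Next, consider the feasible candidate $(\mtx{\Psi}^\ast, \mtx{X}^\#)$ for \prettyref{eq:k-median} whose $a$-th row of $\mtx{X}^\#$ is chosen to be the row of $\widehat{\mtx{W}}$ indexed by a fixed representative of $C^\ast_a$. Combining \prettyref{eq:approximation} with two triangle inequalities then yields
\[
\|\mtx{D}(\PsiCheck \XCheck - \mtx{W}^\ast)\|_1 \,\lesssim\, \|\mtx{D}(\widehat{\mtx{W}} - \mtx{W}^\ast)\|_1 \;=\; \sum_{i,j} d_i d_j\,|\hat{Y}_{ij} - Y^\ast_{ij}|.
\]

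To control the right-hand side, I would invoke a Chernoff bound for the Bernoulli sums $d_j = \sum_k A_{jk}$, which gives $d_j \lesssim \theta_j H_{a(j)}$ uniformly in $j$ with high probability, up to a contribution from nodes of tiny expected degree that can be absorbed into the $O(n/g)$ slack of the target bound. Under the $\mathcal{F}(n,r,p,q,g)$-model, $H_a \equiv (p+(r-1)q)g$, hence
\[
\sum_{i,j} d_i d_j\,|\hat{Y}_{ij} - Y^\ast_{ij}| \,\lesssim\, \big((p+(r-1)q)g\big)^2\,\|\Ystar - \Yhat\|_{1,\vct{\theta}} \,\lesssim\, \big((p+(r-1)q)g\big)^2 \cdot \tfrac{r}{\delta}\bigl(n + rg\sqrt{np}\bigr),
\]
by Corollary~\ref{cor:Fmodel}. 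For the matching lower bound, I define a permutation $\mtx{\Pi}\in\mathcal{S}_r$ by mapping each estimated cluster $c$ to the true cluster containing the largest $\theta$-weighted share of its members. For any $i \in \mathcal{E}(\mtx{\Pi})$, I argue (using the $k$-median optimality of $\XCheck$ together with the global upper bound just derived) that the chosen medoid $\XCheck_{c\bullet}$ must lie near $\vct{v}_{\mtx{\Pi}(c)}$, whence triangle inequality and the separation $\|\vct{v}_{\mtx{\Pi}(c)} - \vct{v}_{a(i)}\|_1 \gtrsim g^2(p+(r-1)q)$ force $\|(\PsiCheck\XCheck)_{i\bullet} - (\mtx{W}^\ast)_{i\bullet}\|_1 \gtrsim g^2(p+(r-1)q)$. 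Summing $d_i$ times this lower bound over $i \in \mathcal{E}(\mtx{\Pi})$ gives
\[
\Big(\sum_{i\in\mathcal{E}(\mtx{\Pi})} d_i\Big)\cdot g^2(p+(r-1)q) \,\lesssim\, \|\mtx{D}(\PsiCheck\XCheck - \mtx{W}^\ast)\|_1.
\]
Chaining these two displays cancels one factor of $g^2(p+(r-1)q)$, and replacing $d_i$ by its Chernoff lower bound $\gtrsim \theta_i(p+(r-1)q)g$ converts $\sum d_i$ to $g(p+(r-1)q)\sum \theta_i$, delivering the claimed $\sum_{i\in\mathcal{E}(\mtx{\Pi})}\theta_i \lesssim \frac{r}{\delta}(n/g + r\sqrt{np})$.

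\emph{The main obstacle} is the cluster-separation step: one must show that the plurality map $\mtx{\Pi}$ is genuinely a bijection, and that each medoid $\XCheck_{c\bullet}$ truly lies near a single ideal center $\vct{v}_{\mtx{\Pi}(c)}$ rather than straddling two true clusters. This requires a global consistency argument of the form ``if $\mtx{\Pi}$ failed to be a bijection, or some medoid lay between two ideal centers, the aggregate $k$-median cost would exceed the upper bound already established, a contradiction.'' A secondary technical point is the two-way passage between $d_i$- and $\theta_i$-weighted sums, since nodes with $\theta_i H_{a(i)} = O(\log n)$ need not have $d_i$ concentrated around $\theta_i H_{a(i)}$; these must either be handled by a truncation argument on the degree or absorbed into the error budget already allowed by the $n/g$ term in the conclusion.
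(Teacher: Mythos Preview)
Your Step~1 (pass from $\widehat{\mtx{W}}$ to $\widecheck{\mtx{W}}=\PsiCheck\XCheck$ via a feasible candidate and the $\tfrac{20}{3}$-approximation) is essentially the paper's first step. The remaining two ingredients, however, diverge from the paper and each hides a real difficulty.

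\textbf{The $\vct{d}\leftrightarrow\vct{\theta}$ conversion.} You propose node-wise Chernoff bounds $d_i\asymp f_i=\theta_i h$ and label the failure for low-degree nodes a ``secondary technical point''. It is not secondary: the theorem is stated to hold even in the bounded-average-degree regime $p,q=O(1/n)$, where $f_i=O(1)$ for \emph{every} node and no multiplicative degree concentration is available. In particular your final step $\sum_{i\in\mathcal{E}}d_i\gtrsim h\sum_{i\in\mathcal{E}}\theta_i$ can fail outright (misclassified nodes may all have $d_i=0$), and the truncation/absorption fix does not fit into the $n/g$ budget in general. The paper sidesteps this entirely: it introduces $\widecheck{\mtx{Y}}$ with $\widecheck{\mtx{W}}=\widecheck{\mtx{Y}}\mtx{D}$, notes that both $\widecheck{\mtx{Y}}-\Ystar$ and $\Yhat-\Ystar$ have entries in $[-1,1]$, and uses the elementary inequality
\[
\bigl|\,\|\mtx{Z}\|_{1,\vct{d}}-\|\mtx{Z}\|_{1,\vct{f}}\,\bigr|\le \|\mtx{Z}\|_\infty\cdot\|\vct{d}\vct{d}^\top-\vct{f}\vct{f}^\top\|_1
\]
together with the aggregate bound $\|\vct{d}\vct{d}^\top-\vct{f}\vct{f}^\top\|_1\lesssim\|\vct{f}\|_1(n+\sqrt{n\|\vct{f}\|_1})$ from \prettyref{lmm:fd}. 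This converts the entire inequality $\|\widecheck{\mtx{Y}}-\Ystar\|_{1,\vct{d}}\lesssim\|\Yhat-\Ystar\|_{1,\vct{d}}$ into its $\vct{\theta}$-weighted analogue at a cost of $M/h^2\lesssim\frac{r}{\delta}(n+rg\sqrt{np})$, with no per-node concentration needed and matching the stated probability $0.99-2(e/2)^{-2n}$.

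\textbf{The clustering extraction.} You correctly flag the plurality-permutation/medoid-localization step as the main obstacle, and it is not resolved in your outline: injectivity of the plurality map and the ``no straddling'' claim both require exactly the kind of global argument you allude to but do not supply. The paper replaces this with a clean combinatorial device carried out \emph{after} the $\vct{\theta}$-conversion. Define $S_a=\{i\in C^*_a:\|(\widecheck{\vct{Y}}_{i\bullet}-\vct{Y}^*_{i\bullet})\diag(\vct{\theta})\|_1\ge g\}$ and $T_a=C^*_a\setminus S_a$; the $\mathcal{F}$-model separation $\|(\vct{Y}^*_{i\bullet}-\vct{Y}^*_{j\bullet})\diag(\vct{\theta})\|_1=2g$ forces nodes in different $T_a$'s to carry different rows of $\PsiCheck$. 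Classifying the $T_a$'s as empty ($R_1$), pure ($R_2$), or impure ($R_3$), the pigeonhole count $|R_2|+2|R_3|\le r=|R_1|+|R_2|+|R_3|$ gives $|R_3|\le|R_1|$, whence $\sum_{i\in\mathcal{E}(\mtx{\Pi})}\theta_i\le 2\sum_{i\in S}\theta_i\le\frac{2}{g}\|\widecheck{\mtx{Y}}-\Ystar\|_{1,\vct{\theta}}$. This avoids any need to localize medoids or prove a map is bijective.

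In short: your architecture is right, but the paper's proof performs the $\vct{d}\to\vct{\theta}$ switch \emph{before} the cluster-extraction step (via the $\ell_\infty$-bounded/aggregate-$\ell_1$ trick rather than Chernoff), and then extracts the permutation via the pure/impure counting argument rather than plurality. Both substitutions are what make the argument go through in the sparse regime and close the gap you identified.
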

\noindent We prove this claim in \prettyref{sec:proof_kmedian}. Extension to the general \DCSBM setting is straightforward.

If we let $\mtx{\Pi}_{ \vct{\theta} }$ be a minimizer of the LHS of~\prettyref{eq:Sbound} and $\mathcal{E}_{\vct{\theta}} := \mathcal{E}(\mtx{\Pi}_{ \vct{\theta} })$, then the quantity $ \sum_{i \in \mathcal{E}_{\vct{\theta}} } \theta_i $ is the number of misclassified nodes \emph{weighted} by their degree heterogeneity parameters $ \{ \theta_i \}$. \prettyref{thm:kmedian} controls this weighted quantity, which is natural as nodes with smaller $ \theta_i $ are harder to cluster and thus less controlled in~\prettyref{eq:Sbound}. 
Notably, the bound given in \prettyref{eq:Sbound}  is applicable even in the \emph{sparse graph regime} with bounded average degrees, i.e., $p, q = O(1/n)$. For example, suppose that $p =a/n$ and $q=b/n$ for two fixed constants $a>b$,  $r=O(1)$ and $g \asymp n$; if $(a-b)/\sqrt{a}$ is sufficiently large, then, with the choice $ \delta \asymp (a-b)/n$, the right hand side of~\prettyref{eq:Sbound} can be an arbitrarily small constant times $ n $.
In comparison, conventional spectral methods are known to be inconsistent in this sparse regime \citep{KMMNSZZ2013}. While this difficulty is alleviated under \SBM by the use of regularization or non-backtracking matrices (\eg, \cite{LeVershynin15,BordenaveLelargeMassoulie:2015dq}),
rigorous justification and numerical validation under \DCSBM have not been well explored.
\\

It is sometimes desirable to have a direct (unweighted) bound on the number misclassified nodes. Suppose that $\mtx{\Pi}_0 \in \mathcal{S}_r $ is a permutation matrix that minimizes $|\mathcal{E}(\mtx{\Pi})|$, and let $\mathcal{E}_0:=\mathcal{E}(\mtx{\Pi}_0)$. A bound on the unweighted misclassification error $ | \mathcal{E}_0 | $ can be easily derived from the general weighted bound~\prettyref{eq:Sbound}. For example, combining~\prettyref{eq:Sbound} with the AM-HM inequality $\sum_{i\in \mathcal{E}_{\vct{\theta}}} \theta_{i}\ge\frac{|\mathcal{E}_{\vct{\theta}}|^{2}}{\sum_{i\in \mathcal{E}_{\vct{\theta}}} 1/\theta_{i}} $,
we obtain that
\begin{equation}
|\mathcal{E}_0| \leq |\mathcal{E}_{\vct{\theta}} | 
\lesssim \sqrt{\frac{1}{\delta g} r (n + rg\sqrt{np}) \sum_{i=1}^{n} \frac{1}{\theta_{i}}}.
\label{eq:Sbound_AM_HM}
\end{equation}
Another bound on $ |\mathcal{E}_0| $, which is applicable even when some $ \theta_i $'s are zero, can be derived as follows: we pick any number $\tau>0$ and use the inequality~\prettyref{eq:Sbound} to get
\begin{equation}
|\mathcal{E}_0| \leq |\mathcal{E}_\theta| \le \frac{1}{\delta g \tau} r (n + rg\sqrt{np}) + \big|\{i:\theta_{i}<\tau\}\big| , \quad \forall \tau >0.
\label{eq:Sbound_tau}
\end{equation}
 This simple bound is already quite useful, for example in standard \SBM with $ \theta_i \equiv 1 $, $ p\ge \frac{1}{n} $ and~$ r $ equal-sized clusters.
In this case, setting $ \tau=0.9 $ in~\prettyref{eq:Sbound_tau} yields that the number of misclassified nodes satisfies  $|\mathcal{E}_0| \lesssim \frac{ r^2 \sqrt{np}  }{ p-q }.$
When $ r=2 $, this bound is consistent with those in~\cite{Vershynin14}, but our result is more general as it applies to more clusters $ r \ge 3 $.

\subsection{Theory of perfect clustering}
\label{sec:exact}

In this section, we show that under an additional condition on the minimum degree heterogeneity parameter $\theta_{\min}= \min_{1 \leq i \leq n} \theta_j$, the solution $ \Yhat $ to the convex relaxation perfectly recovers the true partition matrix $ \Ystar $. In this case the true clusters can be extracted easily from $ \Yhat $ without using the $ k $-median procedure.

For the purpose of studying perfect clustering, we consider a setting of \DCSBM with $B_{aa} = p$ for all $a=1, \ldots, r$, and $B_{ab}=q$ for all $1\leq a<b \leq r$. Under this setup, the degree-corrected density gap condition \prettyref{eq:RDGC} becomes
\begin{equation}
\label{eq:sep_perf}
\max_{1 \leq a < b \leq r} \frac{q + \delta}{H_a H_b} < \lambda < \min_{1 \leq a \leq r} \frac{p - \delta}{H_a^2}.
\end{equation}
Recalling the definition of $ G_a $ in~\prettyref{eq:GandH}, we further define 
$G_{\min}:= \min_{1 \leq a \leq r} G_a.$
The following theorem characterizes when perfect clustering is guaranteed.
\begin{theorem}
\label{thm:exact}
Suppose that the degree-corrected density gap condition \prettyref{eq:sep_perf} is satisfied for some number $\delta >0$ and tuning parameter $\lambda$, and that
\begin{align}
 \label{eq:assump}
 \delta > C_0 \left( \frac{\sqrt{q n}}{G_{\min} }  + \sqrt{\frac{p \log n}{G_{\min}\theta_{\min}}} \right)
\end{align}
for some sufficiently large absolute constant $C_0$. Then with  probability at least $1-10n^{-1}$, the solution~$ \Yhat  $ to the convex relaxation~\prettyref{eq:cvx} is unique and equals $\Ystar$.
\end{theorem}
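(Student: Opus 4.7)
The plan is to prove perfect recovery through a primal--dual (KKT) argument for the SDP~\prettyref{eq:cvx}. Forming the Lagrangian with dual variables $\mtx{S}\succeq \mtx{0}$ (PSD constraint), $\mtx{Z},\mtx{W}\ge \mtx{0}$ (box constraints $\mtx{0}\le \mtx{Y}\le \mtx{J}$), and a diagonal multiplier $\boldsymbol{\eta}$ (for $Y_{ii}=1$), complementary slackness at $\Ystar$ forces $Z_{ij}=0$ on within-cluster off-diagonal pairs and $W_{ij}=0$ on cross-cluster pairs, since $Y^*_{ij}\in\{0,1\}$. The KKT system thus reduces to constructing a symmetric $\mtx{S}$ such that $\mtx{S}\succeq \mtx{0}$, $\mtx{S}\Ystar=\mtx{0}$ (equivalently $\mtx{S}\vct{u}_a=\vct{0}$ for every cluster indicator $\vct{u}_a:=\mathbf{1}_{C^*_a}$), and (off-diagonal) $M_{ij}+S_{ij}\ge 0$ within clusters and $M_{ij}+S_{ij}\le 0$ across clusters, where $\mtx{M}:=\mtx{A}-\lambda\vct{d}\vct{d}^\top$. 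Strict inequalities in the latter two conditions will later yield uniqueness.

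The construction of $\mtx{S}$ is guided by taking expectations. For $i\in C^*_a$, $j\in C^*_b$ one has $\E M_{ij}=\theta_i\theta_j(B_{ab}-\lambda H_aH_b)+O(\theta_i^2\theta_j^2)$, so the density-gap assumption \prettyref{eq:sep_perf} yields $\E M_{ij}\ge \theta_i\theta_j\delta$ within clusters and $\E M_{ij}\le -\theta_i\theta_j\delta$ across clusters. Let $\mtx{P}^*$ be the orthogonal projector onto $\col(\Ystar)=\mathrm{span}\{\vct{u}_1,\ldots,\vct{u}_r\}$. A natural candidate is
\begin{equation*}
\mtx{S}^* := -(\mtx{I}-\mtx{P}^*)\,\mtx{M}\,(\mtx{I}-\mtx{P}^*) + \diag(\vct{\alpha}),
\end{equation*}
which automatically satisfies $\mtx{S}^*\Ystar=\mtx{0}$, with the diagonal correction $\diag(\vct{\alpha})$ absorbed into $\boldsymbol{\eta}$ and tuned so that the sign conditions become tight.

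The remaining verifications split into two concentration tasks. For positive definiteness on $\col(\Ystar)^\perp$, the expected part $-(\mtx{I}-\mtx{P}^*)\,\E\mtx{M}\,(\mtx{I}-\mtx{P}^*)$ admits a lower bound of order $\delta\cdot G_{\min}$ (it is essentially block-diagonal with eigenvalues controlled by the cluster sums $G_a$), while the fluctuation $(\mtx{I}-\mtx{P}^*)(\mtx{M}-\E\mtx{M})(\mtx{I}-\mtx{P}^*)$ has spectral norm bounded by $\|\mtx{A}-\E\mtx{A}\|_2+\lambda\,\|\vct{d}\vct{d}^\top-\E\vct{d}\vct{d}^\top\|_2 \lesssim \sqrt{qn}$ via standard operator-norm concentration for sparse inhomogeneous random graphs. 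This produces the first threshold $\delta\gtrsim \sqrt{qn}/G_{\min}$ in~\prettyref{eq:assump}. For the entrywise sign conditions, writing $M_{ij}=\E M_{ij}+(M_{ij}-\E M_{ij})$, the mean gives the guaranteed gap $\theta_i\theta_j\delta$, while the fluctuation is controlled via Bernstein's inequality applied to $A_{ij}-\E A_{ij}$ and to the degree products $d_id_j-\E d_id_j$, combined with a union bound over $O(n^2)$ pairs. Normalizing by $\theta_i\theta_j$ so that the fluctuation must be smaller than the gap, the worst entry involves $\theta_{\min}$, yielding the second threshold $\delta\gtrsim \sqrt{p\log n/(G_{\min}\theta_{\min})}$.

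The hardest part is keeping the PSD verification free of $\theta_{\min}$ in the leading (operator-norm) term: one has to show that the \emph{aggregate} cluster quantities $H_a,G_a$ dominate the random fluctuation on $\col(\Ystar)^\perp$ regardless of how unevenly the $\theta_i$'s are distributed within a cluster --- this is precisely why the first term in~\prettyref{eq:assump} features $G_{\min}$ alone, while $\theta_{\min}$ enters only through the more delicate entrywise bound. Once positive semidefiniteness on $\col(\Ystar)^\perp$ together with the strict entrywise sign conditions are verified with probability at least $1-10n^{-1}$ (from Bernstein with a $\log n$ union bound together with the operator-norm tail), uniqueness follows automatically: any other optimizer $\tilde{\mtx{Y}}$ must match $\tilde Y_{ij}=Y^*_{ij}$ on every off-diagonal entry by strict complementary slackness with $\mtx{Z},\mtx{W}$, after which the constraint $\tilde Y_{ii}=1$ pins down the diagonal and forces $\tilde{\mtx{Y}}=\Ystar$.
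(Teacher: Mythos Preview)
Your high-level strategy---a dual/KKT certificate---is the right genre, and in the SBM case ($\theta_i\equiv 1$) it would go through essentially as written. For \DCSBM, however, the specific construction you propose has a genuine gap: the \emph{unweighted} projector $\mtx{P}^*$ onto $\mathrm{span}\{\vct{1}_{C^*_a}\}$ is misaligned with the model. Within block $a$ one has $\E\mtx{M}_{(a,a)}\approx (p-\lambda H_a^2)\,\vct{\theta}_{(a)}\vct{\theta}_{(a)}^\top$, which is rank-one in the direction $\vct{\theta}_{(a)}$, not $\vct{1}_{\ell_a}$. Consequently $-(\mtx{I}-\mtx{P}^*)\,\E\mtx{M}\,(\mtx{I}-\mtx{P}^*)$ is \emph{negative} semidefinite on each within-block component of $\col(\Ystar)^\perp$ (a projection of a PSD rank-one matrix, then negated), so your claim that it ``admits a lower bound of order $\delta\cdot G_{\min}$'' fails whenever the $\theta_i$'s inside a cluster are not constant. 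The diagonal correction $\diag(\vct{\alpha})$ cannot repair this without introducing $\theta_{\min}$ into the leading PSD term, which is exactly what you set out to avoid. Relatedly, the fluctuation bound $\|\mtx{A}-\E\mtx{A}\|_2\lesssim\sqrt{qn}$ is not correct: the within-cluster entries have variance of order $p$, so the full operator norm is $\asymp\sqrt{np}$, not $\sqrt{qn}$. Finally, the entrywise step ``Bernstein applied to $A_{ij}-\E A_{ij}$'' has no content for a single Bernoulli; what must actually concentrate are the row/column averages $(\mathcal{P}_T\mtx{M})_{ij}$, and again the unweighted averages give the wrong scale under heterogeneous $\vct{\theta}$.

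The paper's proof addresses precisely these points by working with $\vct{\theta}$-weighted objects throughout. It is a primal ``divide-and-conquer'' argument rather than a pure KKT construction: one shows $\langle \Ystar-\mtx{Y},\mtx{A}-\lambda\vct{d}\vct{d}^\top\rangle>0$ by splitting into a within-cluster piece and a cross-cluster piece. For the within-cluster PSD certificate, the block is first conjugated by $\diag(\vct{\theta}_{(a)})^{-1/2}$ so that the signal becomes a constant rank-one matrix and the noise has bounded entries; Weyl's inequality together with a matrix Bernstein bound then yields the needed eigenvalue gap, with the diagonal chosen so that $\mtx{\Lambda}_{(a,a)}\vct{1}_{\ell_a}=0$. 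For the cross-cluster piece, the paper uses a nuclear-norm subgradient argument with the \emph{$\sqrt{\theta}$-weighted} characteristic matrix $\mtx{U}$ (so $\mtx{U}\mtx{U}^\top$ has entries $\sqrt{\theta_i\theta_j}/G_a$), and controls $\|\mathcal{P}_T(\mtx{W})\|_{\infty,\mtx{\Theta}^{-1/2}}$ and $\|\mtx{W}\|$ with $\mtx{W}=(\mtx{A}_b-\E\mtx{A}_b)\circ\mtx{\Theta}^{-1/2}$. This $\theta$-weighting is what makes the first term of~\prettyref{eq:assump} depend only on $G_{\min}$ and pushes $\theta_{\min}$ into the second term alone. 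If you rework your certificate with the $\theta$-weighted projector (columns $\propto\sqrt{\theta_i}\,\vct{1}_{C^*_a}$) and separate the within/between blocks before invoking operator-norm concentration, your outline can be salvaged; as written, the unweighted $\mtx{P}^*$ and the $\sqrt{qn}$ spectral bound are the steps that fail.
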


The condition~\prettyref{eq:assump} depends on the minimum values $G_{\min}$ and $\theta_{\min}$. Such dependence is necessary for perfect clustering, as clusters and nodes with overly small $ G_a $ and $ \theta_i $ will have too few edges and are not recoverable. In comparison, the approximate recovery results in~\prettyref{thm:approx} are not sensitive to either $\theta_{\min}$ or $G_{\min}$, as should be expected.
Valid for the more general \DCSBM, \prettyref{thm:exact} significantly generalizes the existing theory for standard \SBM on perfect clustering by SDP in the literature (see, \eg, \cite{CSX2013, ChenXu14, Cai2014robust}).  Taking $ n\to \infty $, \prettyref{thm:exact} guarantees that the probability of perfect clustering converges to one, thereby implying the convex relaxation approach is \emph{strongly consistent} in the sense of \cite{ZLZ2012}.

In the special case of standard \SBM with $\theta_i=1, \forall i \in [n]$, the density gap lower bound~\prettyref{eq:assump} simplifies to
\begin{align*}
\delta \gtrsim \frac{ \sqrt{qn} } { \ell_{\min} }  + \sqrt{ \frac{p \log n }{ \ell_{\min} } } , 
\label{eq:suff_sdp_exact_sbm}
\end{align*}
where $\ell_{\min} := \min_{1 \le a \le r} \ell_a$ is the minimum community size and $\ell_a := |C^*_a|$ is the size of community $a$.
This density gap lower bound is consistent with best existing results given in \cite{CSX2013, ChenXu14, Cai2014robust} --- as we discussed earlier, our density condition in~\prettyref{eq:RDGC2} under the $\calF(n,r,p,q,g)$ model (which encompasses \SBM with equal-sized clusters) is the same as in these previous papers, with the minor difference that in these papers the term $ \frac{\lambda}{d_i d_j} $ in the convex relaxation~\prettyref{eq:cvx} is replaced by a tuning parameter~$ \lambda' $ assumed to satisfy the condition $q+\delta < \lambda' < p - \delta$.

\section{Numerical results}
\label{sec:simulation}

In this section, we provide numerical results on both synthetic and real datasets, which corroborate our theoretical findings.
Our convexified modularity maximization approach is found to empirically outperform state-of-the-art methods in several settings. 

The convexified modularity maximization problem~\prettyref{eq:cvx} is a semidefinite program (SDP), and can be solved efficiently by a range of general and specialized algorithms. Here we use the alternating direction method of multipliers (ADMM) suggested in \cite{Cai2014robust}.
To specify the ADMM solver, we need some notations as follows. For any two $n \times n$ matrices $\mtx{X}$ and $\mtx{Y}$, let $\max\{ \mtx{X}, \mtx{Y} \}$ be the matrix whose $(i,j)$-th entry is given by
$\max\{X_{ij}, Y_{ij}\}$; the matrix $ \min\{ \mtx{X}, \mtx{Y} \}$ is similarly defined.
For a symmetric matrix $\mtx{X}$ with an eigenvalue decomposition $\mtx{X}= \mtx{U} \mtx{\Sigma}  \mtx{U}^\top$,
let $( \mtx{X} )_{+} :=  \mtx{U} \max \{ \mtx{\Sigma}, \mtx{0} \} \mtx{U}^\top$,
and let $(\mtx{X} )_{\mtx{I}}$ be the matrix obtained by setting all the diagonal entries of $\mtx{X}$ to $1$. Recall that $\mtx{J}$ denotes the $n \times n$ all-one matrix. The ADMM algorithm for solving~\prettyref{eq:cvx} with the dual update step size equal to $1$, is given as \prettyref{alg:ADMMSDP}.

\begin{algorithm}
\caption{ADMM algorithm for solving the SDP \prettyref{eq:cvx} } \label{alg:ADMMSDP}
\begin{algorithmic}[1]
\STATE Input: $\mtx{A}$ and $\lambda= \Iprod{\mtx{A}}{\mtx{J}}^{-1}$.
\STATE Initialization:   $ \mtx{Z}^{(0)} = \mtx{\Lambda}^{(0)} = 0$, $ k=0 $ and $\texttt{MaxIter} =100$.
\STATE  while $k < \texttt{MaxIter}$
\begin{enumerate}
\item $
\mtx{Y}^{(k+1)}  = \left(\mtx{Z}^{(k)} - \mtx{\Lambda}^{(k)} + \mtx{A} - \mtx{\lambda} \vec{d} \vec{d}^\top \right)_{+}
$
\item $
\mtx{Z}^{(k+1)} = \left(  \min \left\{ \max \left\{ \mtx{Y}^{(k) } + \mtx{\Lambda}^{(k)}, \mtx{0} \right\} , \mtx{J} \right\} \right)_{\mtx{I}}
$
\item $
\mtx{\Lambda}^{(k+1)}  = \mtx{\Lambda}^{(k)} + \mtx{Y}^{(k+1)}- \mtx{Z}^{(k+1)}
$
\item $ k = k+1 $
\end{enumerate}
end while
\STATE Output the final solution $\mtx{Y}^{(k)}$.
\end{algorithmic}
\end{algorithm}

Our choice of the tuning parameter $\lambda= \Iprod{\mtx{A}}{\mtx{J}}^{-1}$ is motivated by the following simple observation.
By standard concentration inequalities,  the number $\Iprod{\mtx{A}}{\mtx{J}}$ is close to its expectation $\sum_{i} \expect{d_i} \approx \| \vct{f} \|_1$.
Under the $\calF(n,r,p,q,g)$-model, we have $\| \vct{f} \|_1 = r (p + (r-1) q ) g^2$  and $H_a=( (r-1)q +p ) g$ for all $a \in [r]$.
In this case and with the above choice of $ \lambda $, the density gap assumption~\prettyref{eq:sep_perf} simplifies to $ \frac{q+\delta}{(r-1)q+p} < \frac{1}{r} <\frac{p-\delta}{ (r-1) q +p }$, which holds with $\delta=(p-q)/r$.

After obtaining the solution $\Yhat$ to the convex relaxation, we extract an explicit clustering using the weighted $k$-median procedure described in~\prettyref{eq:k-median} with $ k=r $, where the number of major clusters $r$ is assumed known.  Our complete community detection algorithm, Convexified Modularity Maximization (CMM), is summarized in~\prettyref{alg:CD}. In our experiments, the weighted $ k $-median problem is solved by an iterative greedy procedure that optimizes alternatively over the variables $ \mtx{\Psi} $ and $ \mtx{X} $ in~\prettyref{eq:k-median}, with $ 100 $ random initializations.

\begin{algorithm}
\caption{Convexified Modularity Maximization (CMM)} \label{alg:CD}
\begin{algorithmic}[1]
\STATE Input: $\mtx{A}$, $\lambda= \Iprod{\mtx{A}}{\mtx{J}}^{-1}$, and $r \ge 2$.
\STATE Solve the convex relaxation~\prettyref{eq:cvx} for $\widehat{\mtx{Y}}$ using \prettyref{alg:ADMMSDP}.
\STATE Solve the weighted $ k $-median problem~\prettyref{eq:k-median} with $\widehat{\mtx{W}}=\widehat{\mtx{Y}}\mtx{D}$ and $k = r$, and output the resulting $ r $-partition of $ [n] $.
\end{algorithmic}
\end{algorithm}

\subsection{Synthetic data experiments}
\label{sec:synthetic}

We provide experiment results on synthetic data generated from \DCSBM. For each node $ i \in [n]$, the degree heterogeneity parameter $ \theta_i $ is sampled independently from a Pareto$ (\alpha, \beta) $ distribution with the density function
$f(x | \alpha, \beta) =  \frac{\alpha \beta^\alpha}{x^{\alpha + 1}} \indc{x \ge \beta}$, 
where $\alpha$ and $\beta$ are called the \emph{shape} and \emph{scale} parameters, respectively. We consider different values of the shape parameter, and choose the scale parameter accordingly so that the expectation of each $ \theta_i $ is fixed at $ 1 $. Note that the variability of the $\theta_i $'s decreases with the shape parameter $ \alpha $.  Given the degree heterogeneity parameters $ \{\theta_i\} $ and two numbers $0 \le q<p \le 1 $, a graph is generated from \DCSBM, with the edge probability between nodes $i \in C^*_a $ and $j \in C^*_b$ being $\min(1, \theta_i \theta_j B_{ab})$ and $ B_{aa}  = p,  B_{ab} = q, \forall 1\le a \neq b\le r $. 

We applied our CMM approach in \prettyref{alg:CD} to the resulting graph, and recorded the misclassification rate $ | \calE(\mtx{\Pi}_0) | /n $ (cf.\ the discussion after \prettyref{thm:kmedian}).
For comparison, we also applied the SCORE algorithm in~\cite{Jin2012} and the OCCAM algorithm in~\cite{ZLZ14}, which are reported to have state-of-the-art empirical performance on \DCSBM in the existing literature. The SCORE algorithm performs $ k $-means on the  top-$ 2 $ to top-$ r $  eigenvectors of the adjacency matrix normalized element-wise by the top-$ 1 $ eigenvector. OCCAM is a type of regularized spectral $ k $-median algorithm; it can produce non-overlapping clusters and its regularization parameter is given explicitly in~\cite{ZLZ14}. For all $ k $-means/medians procedures used in the experiments, we took  $ k=r $ and used $ 100 $ random initializations.

\begin{figure}[htbp!]
	\centering
	\begin{tabular}{cc}
		\includegraphics[width=0.45\columnwidth]{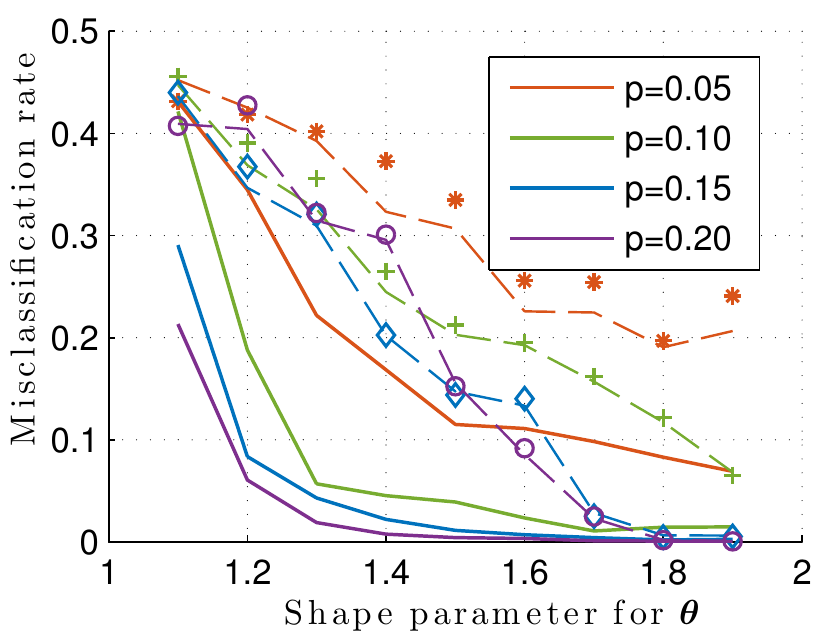}&  
		\includegraphics[width=0.45\columnwidth]{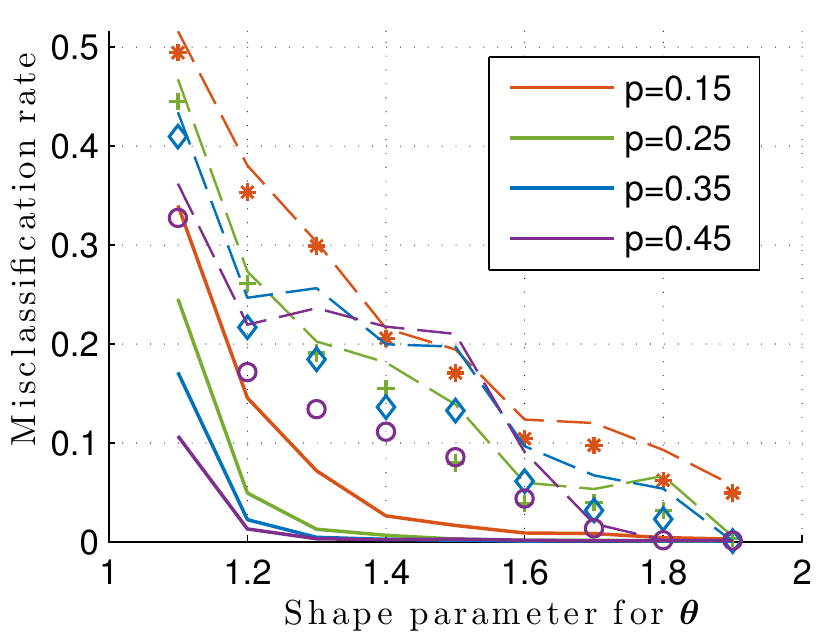}\\
		(a) & (b) \\
		\includegraphics[width=0.45\columnwidth]{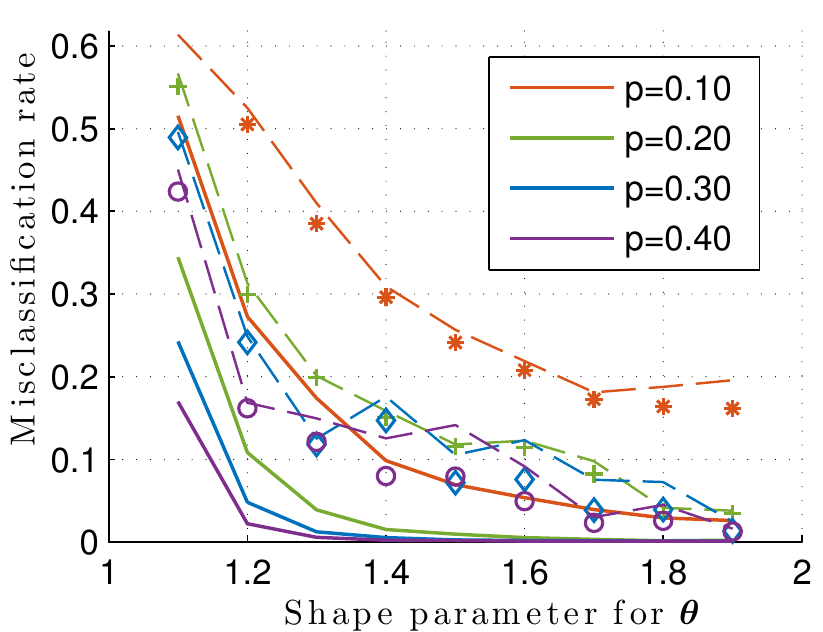} & 
		\includegraphics[width=0.45\columnwidth]{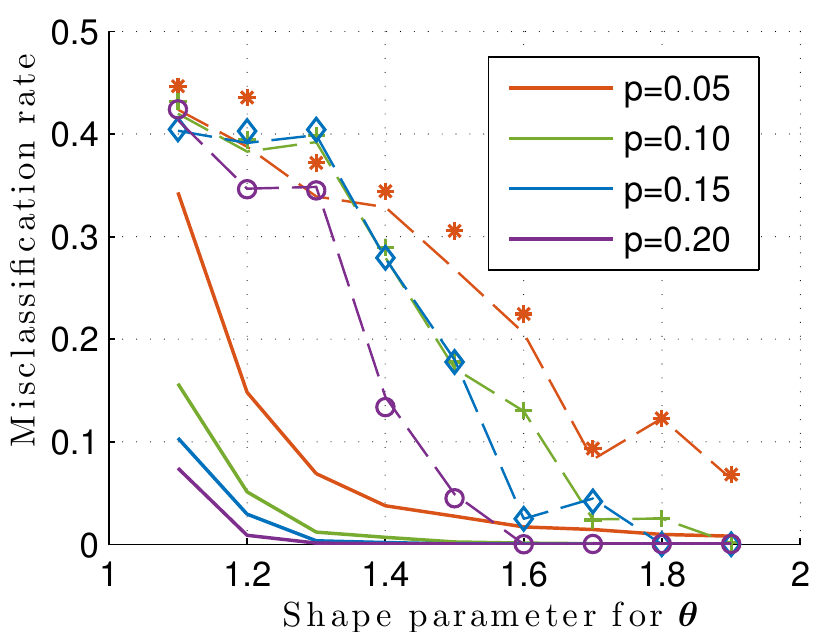}\\ 
		(c) & (d)\\
	\end{tabular}
	\caption{Clustering performance on synthetic datasets versus the variability of $ \vct{\theta} $. Solid lines: our CMM algorithm. Dash lines: the SCORE algorithm. Individual marker: Regularized spectral algorithm.  Panel (a): $ 400 $ nodes, $ 2 $ clusters of size $ 200 $. 
	Panel (b): $ 600 $ nodes, $ 3 $ clusters of size $ 200 $.
	Panel (c): $ 800 $ nodes, $ 4 $ clusters of size $ 200 $. 
	Panel (d): $ 900 $ nodes, $ 2 $ clusters of size $ 450 $.
	Each point represents the average of $ 20 $ independent runs. In all experiments we set $ q = 0.3p $.  }
	\label{fig:synthetic}
\end{figure}

\prettyref{fig:synthetic} shows the misclassification rates of CMM (solid lines), SCORE (dash lines) and OCCAM (individual markers) for various settings of $ n $, $ p $, $ q $, cluster size and the shape parameter for $ \vct{\theta} $. 
We see that the misclassification rate of CMM grows as the degree parameters $ \{ \theta_i \} $ becomes more heterogeneous (smaller values of the shape parameter),  and as the graph becomes sparser, which is consistent with the prediction of \prettyref{thm:kmedian}. Moreover, our approach has consistently lower misclassification rates than SCORE and OCCAM, with SCORE and OCCAM exhibiting similar performance.

\subsection{Political blog network dataset}
\label{sec:poliblog}

We next test the empirical performance of CMM (\prettyref{alg:CD}), SCORE and OCCAM
on the US political blog network dataset from~\cite{AG2005}.
This dataset consists of 19090 hyperlinks (directed edges) between 1490 political blogs collected in the year 2005.
The political leaning (liberal versus conservative) of each blog
has been labeled manually based on blog directories, incoming and outgoing links and posts around the time of the 2004 presidential election. We treat these labels as the true memberships of $ r=2 $ communities. We ignore the edge direction, and focus on the largest connected component with $ n = 1222$ nodes and $16,714$ edges, represented by the adjacency matrix $ \mtx{A} $.
This graph has high degree variation: the maximum degree is $351$ while the mean degree is around $27$.
CMM, SCORE and OCCAM misclassify $62$, $ 58 $ and $ 65 $ nodes, respectively, out of $1222$ nodes on this dataset. The SCORE method has the best known error rate  on the political blogs dataset in the literature \citep{Jin2012}, and we see that our CMM approach is comparable to the state of the art. Panel~(a) in \prettyref{fig:sol} shows the adjacency matrix $ \mtx{A} $ with rows and columns sorted according to the true community labels.
The output of ADMM \prettyref{alg:ADMMSDP} for solving the convex relaxation~\prettyref{eq:cvx} is shown in~\prettyref{fig:sol}~(b). The partition matrix corresponding to the output of the weighted $ k $-median step in~\prettyref{alg:CD} is shown in~\prettyref{fig:sol}~(c). 

\begin{figure}[ht]
\centering
\begin{tabular}{ccc}
\includegraphics[width=.3\columnwidth]{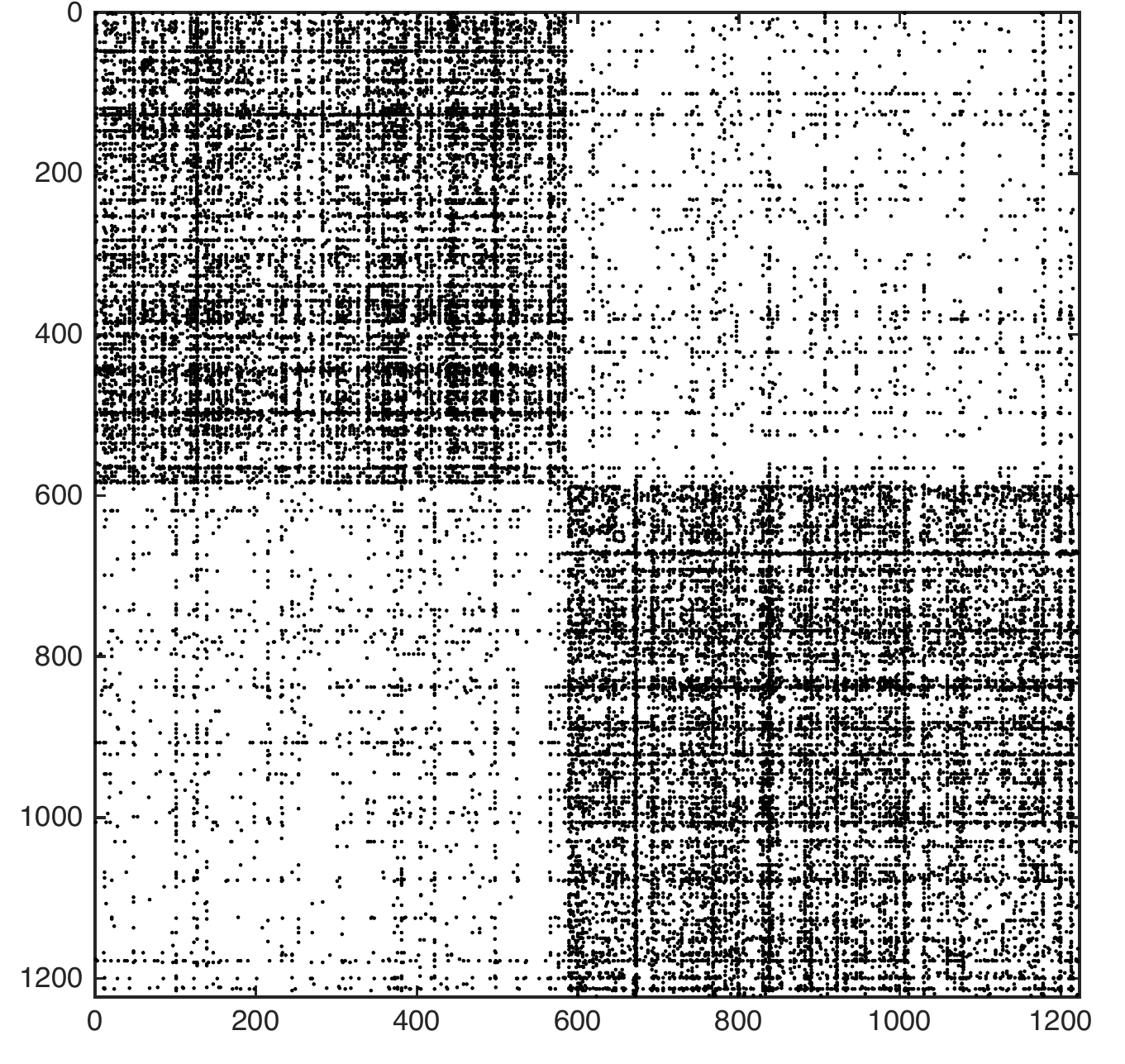} & \includegraphics[width=.31\columnwidth]{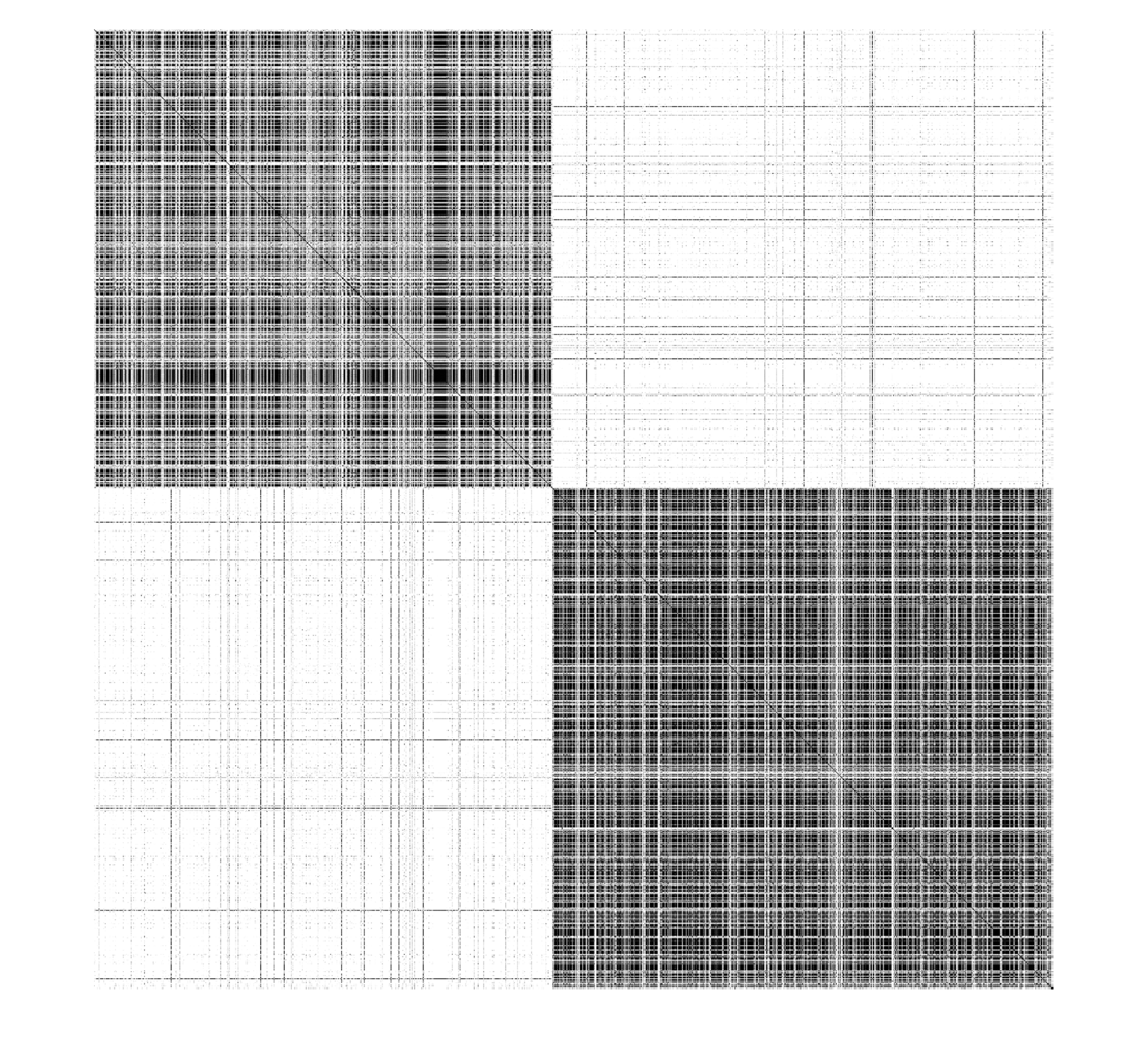}&  \includegraphics[width=.31\columnwidth]{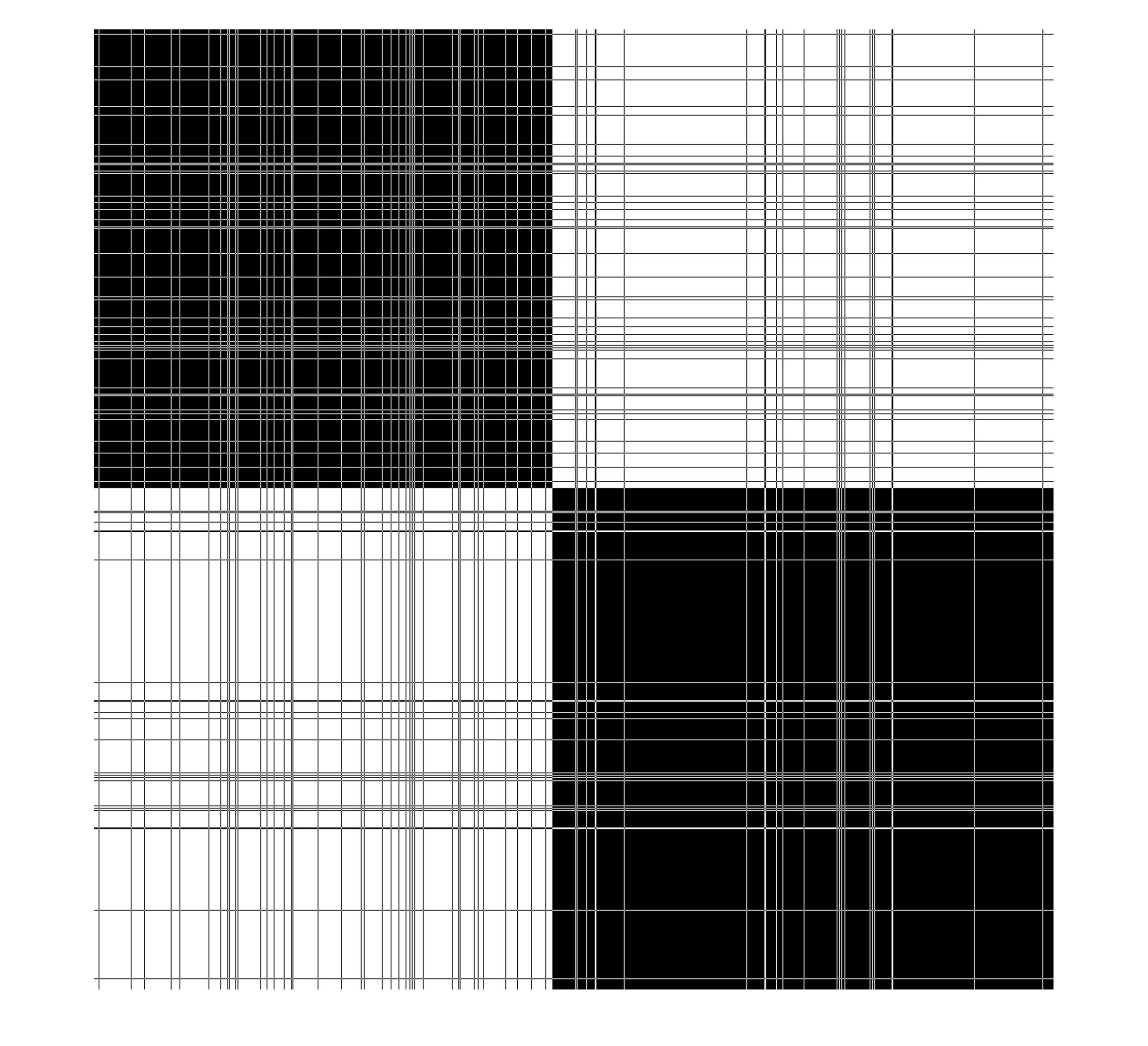}\\
(a)	& (b)  & (c)\\
\end{tabular}
\caption{Panel (a): The adjacency matrix of the largest connected component in the political blog network with $ 1222 $ nodes. The rows and columns are sorted according to the true community labels. The first $ 586 $ rows/columns correspond to the liberal community, and the next $636$ is the conservative community. 
Panel (b): The output matrix $ \Yhat $ of the convex relaxation~\prettyref{eq:cvx} solved by ADMM (\prettyref{alg:ADMMSDP}), with the entries  truncated to the interval $ [0,1] $. Panel~(c): The partition matrix corresponding to the output of CMM (\prettyref{alg:CD}), obtained from the weighted $ k$-median algorithm. Matrix entry values are shown in gray scale with black corresponding to $1$ and white to $ 0 $.}
\label{fig:sol}
\end{figure}

\subsection{Facebook dataset}
\label{sec:facebook}

In this section, we consider the Facebook network dataset from \cite{facebookdataset11,Traud2012}, and compare the empirical performance
of our CMM approach with the SCORE and OCCAM methods.
The Facebook network dataset consists of 100 US universities and all the ``friendship'' links between the users within each university, recorded on one particular day in September 2005. The dataset also contains several node attributes such as the gender, dorm, graduation year and academic major of each user. Here we report results on the friendship networks
of two universities: Simmons College and Caltech.

\subsubsection{Simmons College network}

The Simmons College network contains 1518 nodes and 32988 undirected edges.
The subgraph induced by nodes with graduation year between 2006 and 2009 has a largest connected component with 1137 nodes and 24257 undirected edges, which we shall focus on.
It is observed in \cite{facebookdataset11,Traud2012} that the community structure of the Simmons College network exhibits
a strong correlation with the graduation year --- students in the same year are more likely to be friends.
Panel (a) of \prettyref{fig:simmons} shows this largest component with nodes colored according to their graduation year.

We applied the CMM (\prettyref{alg:CD}), SCORE and OCCAM methods to partition the largest component into $ r=4 $ clusters. In Panels (b)--(d) of \prettyref{fig:simmons} the clustering results of these three methods are shown as the node colors.
In \prettyref{fig:simmons_confmat} we also provide the confusion matrices of the clustering results against the graduation years; the $ (i,j) $-th entry of a confusion matrix represents the number of nodes that are from graduation year~$ i + 2005 $ but assigned to cluster~$ j $ by the algorithm.
We see that our CMM approach produced a partition more correlated with the actual graduation years.  
In fact, if we treat the graduation years as the ground truth cluster labels, then CMM misclassified $ 12.04\% $ of the nodes, whereas SCORE and OCCAM have higher misclassification rates of $ 23.57\% $ and $22.43\%$, respectively. A closer investigation of \prettyref{fig:simmons}  and \prettyref{fig:simmons_confmat} shows that CMM was better in distinguishing between the nodes of year 2006 and 2007.

 \begin{figure}[htbp!]
\centering
\begin{tabular}{cc}
\includegraphics[width=.45\columnwidth]{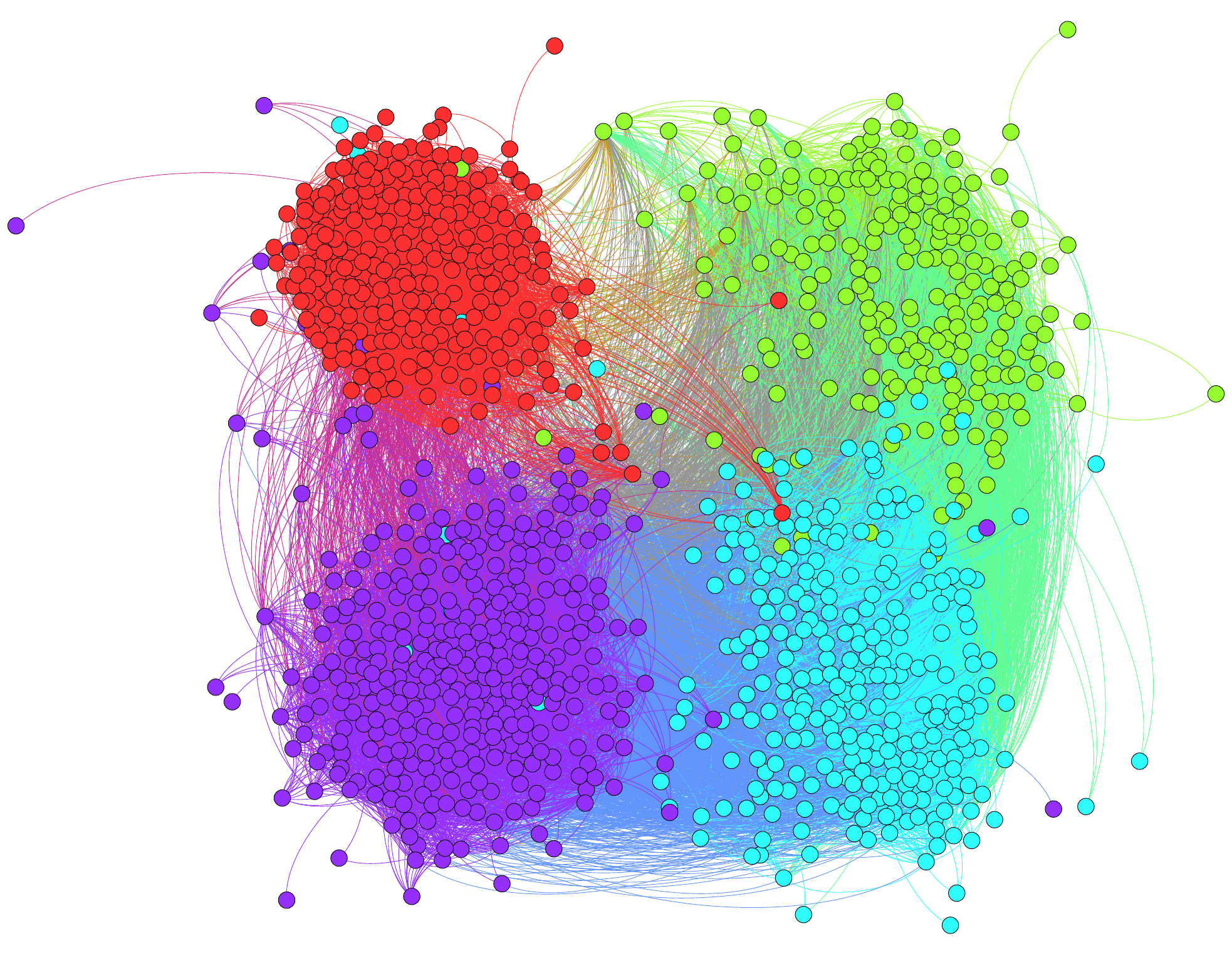}	&  \includegraphics[width=.45\columnwidth]{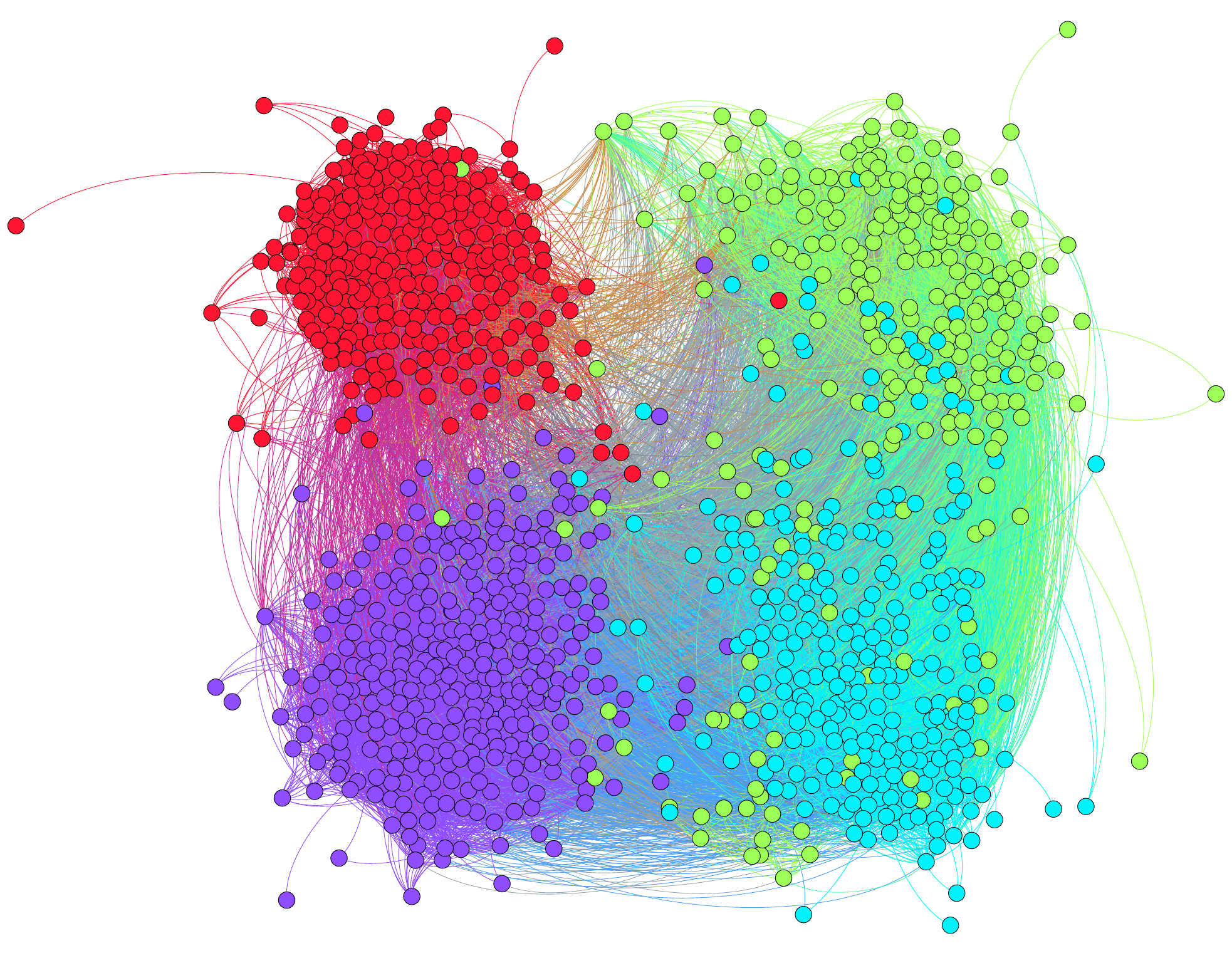}\\
(a)	& (b)  \\
\includegraphics[width=.45\columnwidth]{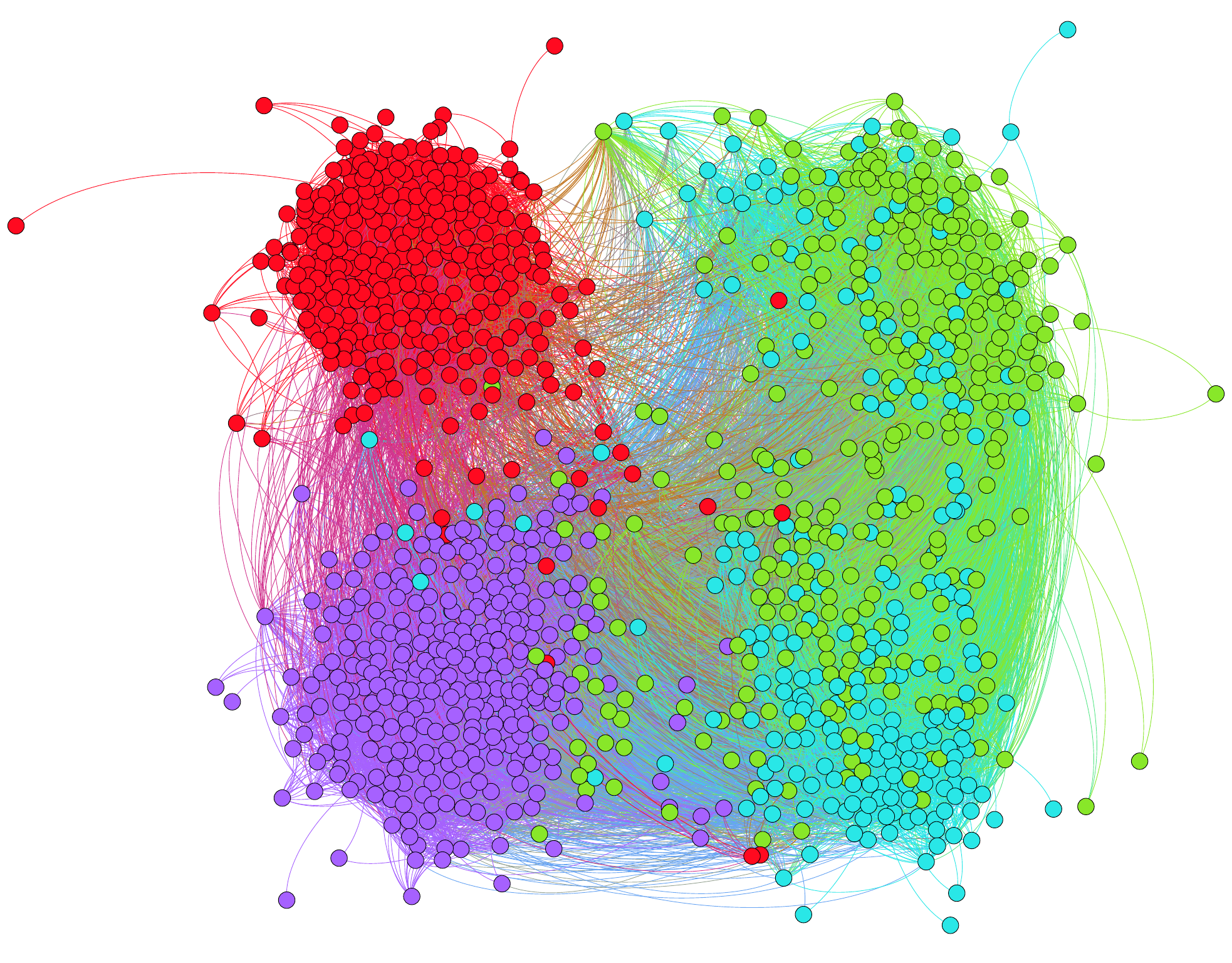}& 
\includegraphics[width=.45\columnwidth]{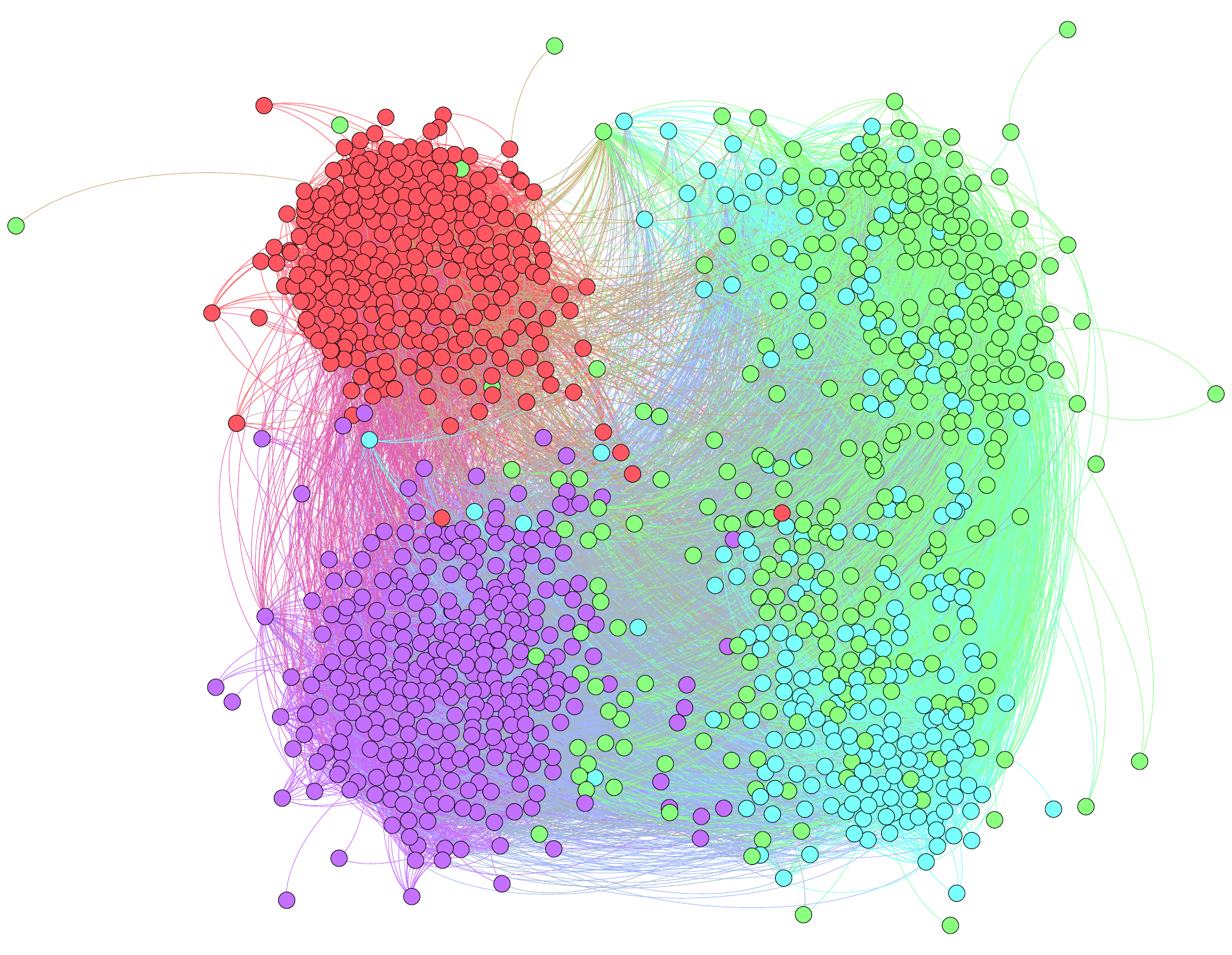}\\
(c) & (d)
\end{tabular}
\caption{ The largest component of the Simmons College network. Panel (a): Each node is colored according to its graduation year, with 2006 in green, 2007 in light blue,  2008 in purple and 2009 in red.
Panels (b)--(d): Each node is colored according to the clustering result of (b) CMM, (c) SCORE and (d) OCCAM. (These plots are generated using the Gephi package \citep{Gelphi15} with the ForceAtlas2 layout algorithm \citep{ForceAtlas14}.)}
\label{fig:simmons}
\end{figure}

\begin{figure}[htbp!]
\centering
\begin{tabular}{ccc}
\includegraphics[width=.31\columnwidth]{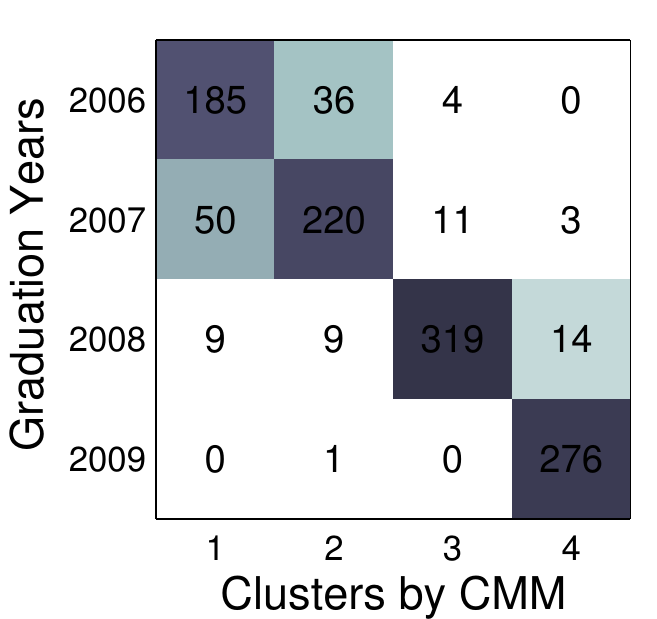}	&  \includegraphics[width=.31\columnwidth]{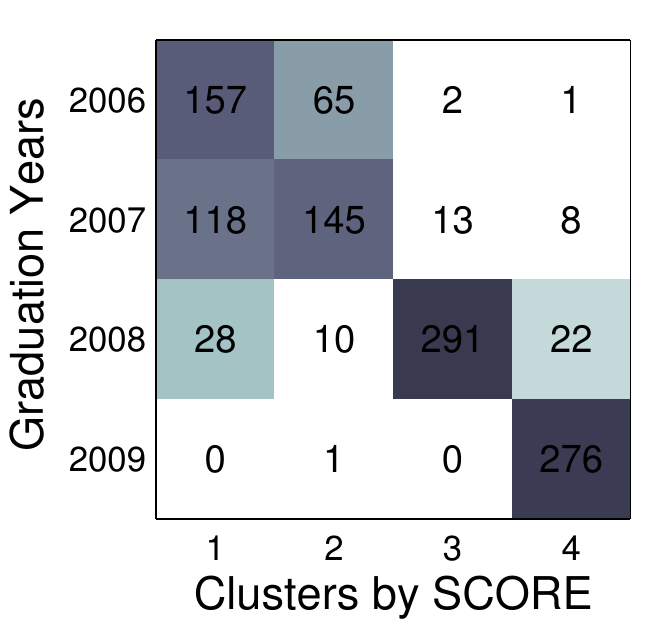}  &
\includegraphics[width=.31\columnwidth]{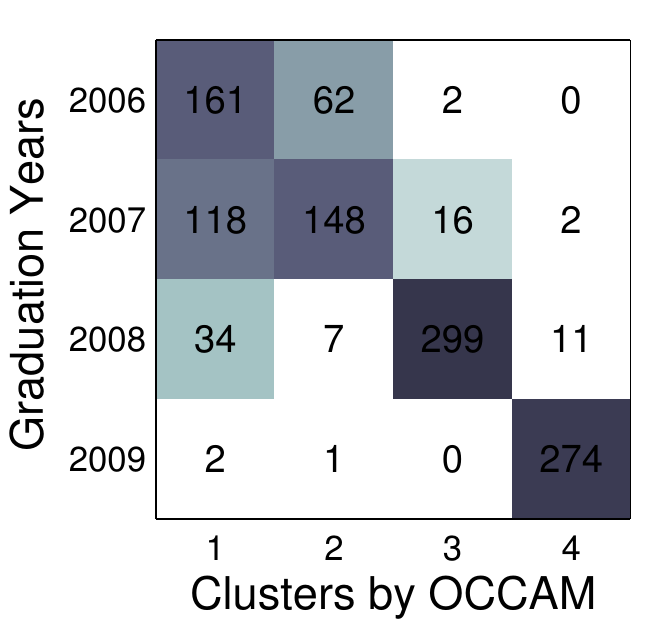} \\
\end{tabular}
\caption{The confusion matrices of CMM,  SCORE and OCCAM applied to the largest component of the Simmons College network.}
\label{fig:simmons_confmat}
\end{figure}

\subsubsection{Caltech network}

In this section, we provide experiment results on the Caltech network. This network has 769 nodes and 16656 undirected edges. 
We consider the subgraph induced by nodes with known dorm attributes, and focus on  its  largest connected component, which consists of 590 nodes and 12822 edges. The community structure is highly correlated with which of the $ 8 $ dorms a user is from, as observed in \cite{facebookdataset11,Traud2012}.

We applied CMM, SCORE and OCCAM to partition this largest component into $ r=8 $ clusters. With the dorms as the ground truth cluster labels, CMM misclassified $21.02\%$ of the nodes, whereas SCORE and OCCAM had higher misclassification rates of  $31.02\%$ and $ 32.03\% $, respectively. The confusion matrices of these methods are shown in \prettyref{fig:caltech_confmat}. We see that dorm 3 was difficult to recover and largely missed by all three methods, but our CMM algorithm better identified the other dorms.
 
\begin{figure}[H]
\centering
\begin{tabular}{ccc}
\includegraphics[width=.31\columnwidth]{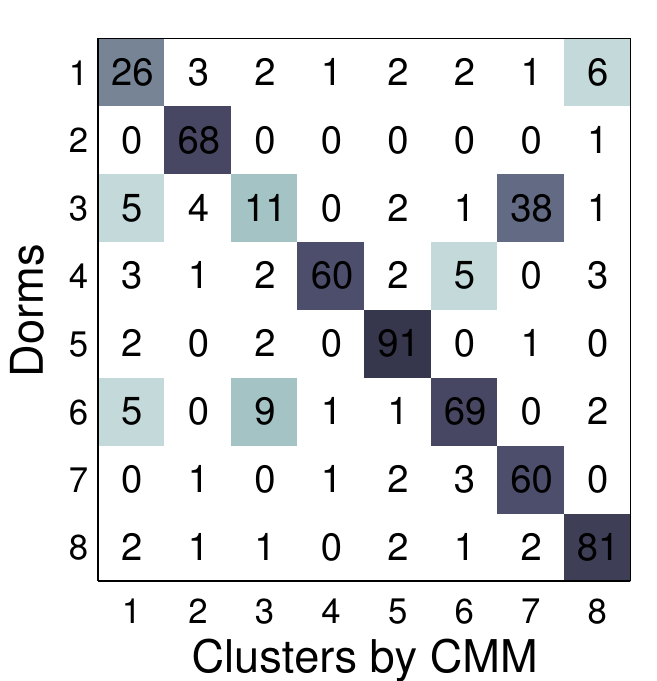}	&  \includegraphics[width=.31\columnwidth]{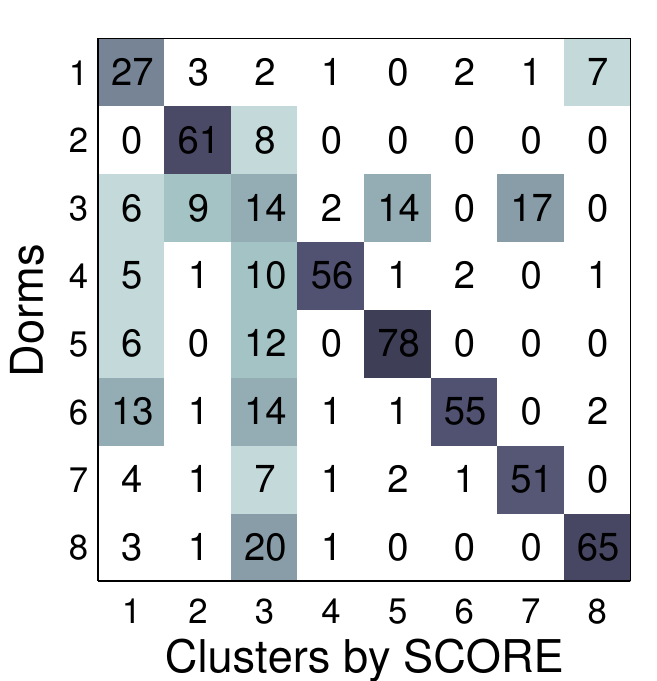}  &
\includegraphics[width=.31\columnwidth]{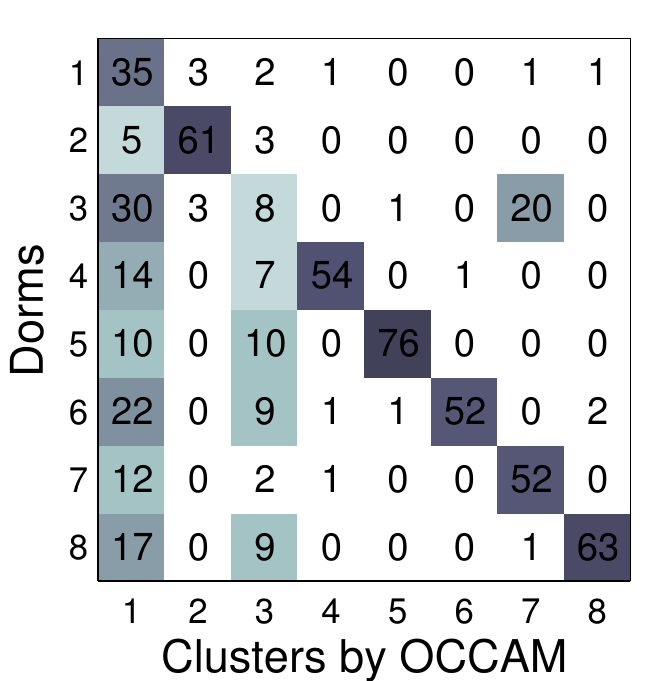} \\
\end{tabular}
\caption{The confusion matrices of CMM,  SCORE and  OCCAM applied to the largest component of the Caltech network against the dorm assignments of the users.}
\label{fig:caltech_confmat}
\end{figure}

\section{Related work}
\label{sec:related}

In this section, we discuss prior results that are related to our work.
Existing community detection methods for \DCSBM include model-based methods and spectral methods. In model-based methods such as profile likelihood and modularity maximization \citep{Newman2006modularity}, one fits the model parameters to the observed network based on the likelihood functions or modularity functions determined by the statistical structure of \DCSBM. In \cite{KN2011}, the maximum likelihood estimator is used to infer the unknown model parameters $\vct{\theta}$ and $\mtx{B}$. These estimates are then plugged into the log likelihood function, which leads to a quality function for community partitions. An estimate of the community structure is obtained by maximizing this 
quality function using a greedy heuristic algorithm. No provable theoretical guarantee is known for this greedy algorithm, and one usually needs to run the algorithm with many random initial points to achieve good performance. The work in \cite{ZLZ2012} discusses profile likelihood methods for DCSBM and the closely related modularity maximization approach. Under the assumption that the number of clusters is fixed, strong consistency is proved when the average degree is $\Omega(\log n)$, and weak consistency when it is $ \Omega(1) $. However, directly solving the
maximization problems is computationally infeasible, as it involves searching over all possible partitions.
Numerically, these optimization problems are solved heuristically using Tabu search and spectral decomposition without theoretical guarantees. The algorithm proposed in \cite{ACBL2013} involves finding an initial clustering using spectral methods, then iteratively updating the labels via maximizing conditional pseudo likelihood, which is done using the EM algorithm in each step of iteration. After simplifying the iterations into one E-step, they establish guaranteed consistency when there are only two clusters. The work in \cite{LLV14} proposes to approximate the profile likelihood functions, modularity functions or other criterions using surrogates defined in a $2$-dimensional subspace constructed by spectral dimension reduction. Thanks to the convexity of the surrogate functions, the search complexity is polynomial. The method and theory are however only applicable when there are two communities.

Spectral methods for community detection have attracted interest from diverse communities including computer science, applied math, statistics, and machine learning; see \eg \cite{RCY2011} and the references therein for results of  spectral clustering under \SBM.
The seminal work of \cite{DHM2004} on \DCSBM (proposed under the name of Extended Planted Partition model) considered a spectral method similar to that in \cite{McSherry2001}. One major drawback is that the knowledge of $\vct{\theta}$ is required in both the theory and algorithm. In the algorithm proposed in \cite{CL2009}, the adjacency matrix is first normalized by the node degrees and then thresholded entrywise, after which spectral clustering is applied. Strong consistency is proved for the setting with a fixed number of clusters. In \cite{chaudhuri}, a modified spectral clustering method was proposed using a regularized random-walk graph Laplacian, and strong consistency is established under the assumption that  the average degree grows at least as $\Omega(\sqrt{n})$. A different spectral clustering approach based on regularized graph Laplacians is considered in \cite{qin2013regularized}. Their theoretical bound on the  misclassified rates depends on the eigenvectors of the graph Laplacian, which is a still random object. Spectral clustering based on unmodified adjacency matrices and degree-normalized adjacency matrices are analyzed in \cite{LR2013} and \cite{GLM15}, which prove rigorous error rate results but did not provide numerical validation on either synthetic or real data.

It is observed in \cite{Jin2012} that spectral clustering based directly the adjacency matrix (or their normalized version) often result in inconsistent clustering in real data, such as the political blogs dataset~\cite{AG2005}, a  popular benchmark for testing community detection approaches. To address this issue, a new spectral clustering algorithm called SCORE is proposed in \cite{Jin2012}. Specifically, the second to $r$-th leading eigenvectors are divided by the first leading eigenvector elementwisely, and spectral clustering is applied to the resulting ratio matrix. In their theoretical results, an implicit assumption is that  the number of communities $ r $ is bounded by a constant, as implied by the condition (2.14) in \cite{Jin2012}. In comparison, our convexified modularity maximization approach works for growing $r$ both theoretically and empirically. As illustrated in \prettyref{sec:synthetic}, our method exhibited better performance on both the synthetic and real datasets considered there, especially when $r \geq 3$.

\section{Discussion and future work}
\label{sec:discussion}

In this paper, we studied community detection in networks with possibly highly skewed degree distributions.
We introduced a new computationally efficient methodology based on convexification of the modularity maximization formulation and a novel doubly-weighted $\ell_1$ norm $k$-median clustering procedure. Our complete algorithm runs provably in polynomial time and is computationally feasible. Non-asymptotic theoretical performance guarantees were established under \DCSBM for both approximate clustering and perfect clustering, which are consistent with the best known rate results in the literature of \SBM. 

The proposed method also enjoys good empirical performance, as was demonstrated on both synthetic data and real-world networks. On these datasets our method was observed to have performance comparable to, and sometimes better than, the state-of-the-art spectral clustering methods, particularly when there are more than two communities.

Our work involves several algorithmic and analytical novelties. We provide a tractable solution to the classical modularity maximization formulation via convexification, achieving simultaneously strong theoretical guarantees and competitive empirical performance.  The theoretical results are based on an aggregate and degree-corrected version of the density gap condition,  which is robust to a small number of outlier nodes and thus is an appropriate condition for approximate clustering.  
In our algorithms and error bounds we made use of techniques from \cite{Vershynin14, Jin2012, LR2013}, but departed from these existing works in several important aspects. In particular, we proposed a novel
$ k $-median formulation using doubly-weighted $ \ell_1 $ norms, which allows for a tight analysis that produces strong non-asymptotic guarantees on approximate recovery. Furthermore, we developed a non-asymptotic theory on perfect clustering, which is based on a divide-and-conquer primal-dual analysis and makes crucial use of certain weighted $\ell_1$ metrics that exploit the structures of \DCSBM.

A future direction important in both theory and practice, is to consider networks with overlapping communities, where a node may belong to multiple communities simultaneously. To accommodate such a setting several extensions of \SBM have been introduced in the literature. For example, \cite{ZLZ14} proposed a spectral algorithm based on the Overlapping Continuous Community Assignment Model (OCCAM). As our CMM method is shown to be an attractive alternative to spectral methods for \DCSBM, it will be interesting to extend CMM to allow for both overlapping communities and heterogeneous degrees. Another direction of interest is to develop a general theory of optimal misclassification rates for \DCSBM along the lines of \cite{Gao15,Zhangzhou15}.

\section{Proofs}
\label{sec:proofs}
In this section, we prove the theoretical results in \prettyref{thm:approx}--\ref{thm:exact}. 
Introducing the convenient shorthand $\mtx{\Theta} := \vct{\theta} \vct{\theta}^\top \in \reals_+^{n\times n}$, we can write the weighted $ \ell_1 $ norm of a matrix $ \mtx{Z} $ in \prettyref{def:weighted_norm} as
\[
\|\mtx{Z}\|_{1, \vct{\theta}} =\sum_{1 \leq i, j \leq n} |\theta_i Z_{ij} \theta_j| = \|\mtx{\Theta} \circ \mtx{Z}\|_1, 
\]
where $ \circ $ denotes the Hadamard (element-wise) product.  Several standard matrix norms will also be used: the spectral norm $\|\mtx{Z}\|$ (the largest singular value of $ \mtx{Z} $); the nuclear norm  $\|\mtx{Z}\|_\ast$ (the sum of the singular values); the $ \ell_1 $ norm $\|\mtx{Z}\|_1=\sum_{i,j}|Z_{ij}|$; the $\ell_\infty$ norm $\|\mtx{Z}\|_\infty=\max_{i,j} |Z_{ij}|$; 
and the $\ell_{\infty} \to \ell_1$ operator norm $\|\mtx{Z}\|_{\infty \to 1} = \sup_{ \| \vct{v} \|_\infty \le 1} \| \mtx{Z} \vct{v} \|_1$.  

For any vector $\vct{v} \in \mathbb{R}^n$, we denote by $\diag(\vct{v})$ the $n\times n$ diagonal matrix whose diagonal entries are correspondingly the entries of $\vct{v}$. For any matrix $\mtx{M} \in \reals^n$, let $\diag(\mtx{M})$ denote the $n \times n$ diagonal matrix with diagonal entries given by the corresponding diagonal entries of $\mtx{M}$.  We denote absolute constants by $C, C_0, c_1$, etc, whose value may change line by line. \\

Recall that $ \vct{d} $ and $ \vct{f} $ are the vectors of node degrees and their expectations, respectively, where 
\[
f_i = \theta_i H_a,
\]
for each $i \in C^\ast_a$, $a=1, \ldots, r$. 
A key step in our proofs is to appropriately control the deviation of the degrees from their expectation. This is done in the following lemma.
\begin{lemma}
\label{lmm:fd}
Under \DCSBM, with probability at least $0.99$, we have
\[
\max \left(\|\vct{f}\vct{f}^\top - \vct{d} \vct{d}^\top\|_{\infty \to 1}, \|\vct{f}\vct{f}^\top - \vct{d} \vct{d}^\top\|_1 \right) \leq C \big(n + \sqrt{n \|\vct{f}\|_1} \, \big) \|\vct{f}\|_1
\]
for some absolute constant $C>0$.
\end{lemma}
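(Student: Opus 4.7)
The plan is to reduce the two target matrix-norm estimates to a single scalar estimate on $\|\vct{d} - \vct{f}\|_1$ via a rank-one decomposition. Let $\vct{\xi} := \vct{d} - \vct{f}$, so that
\[
\vct{d}\vct{d}^\top - \vct{f}\vct{f}^\top = \vct{f}\vct{\xi}^\top + \vct{\xi}\vct{f}^\top + \vct{\xi}\vct{\xi}^\top.
\]
For any rank-one matrix $\vct{u}\vct{v}^\top$, a direct computation gives $\|\vct{u}\vct{v}^\top\|_1 = \|\vct{u}\vct{v}^\top\|_{\infty \to 1} = \|\vct{u}\|_1 \|\vct{v}\|_1$, so both norms of interest are bounded by $2\|\vct{f}\|_1 \|\vct{\xi}\|_1 + \|\vct{\xi}\|_1^2$. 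It thus suffices to control the scalar $\|\vct{\xi}\|_1$ with high probability.

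I would derive two complementary first-moment bounds on $\|\vct{\xi}\|_1$. The first is a Chebyshev-type estimate: for each $i \in C^*_a$, the degree $d_i = \sum_{j \ne i} A_{ij}$ is a sum of independent Bernoullis with $\Var(d_i) \le \mathbb{E}[d_i] = f_i - \theta_i^2 B_{aa}$, while $|f_i - \mathbb{E}[d_i]| = \theta_i^2 B_{aa} \le 1$. Combining yields $\mathbb{E}[\xi_i^2] \le f_i + 1$, whence $\mathbb{E}[|\xi_i|] \le \sqrt{f_i + 1}$. Summing over $i$ and applying Cauchy--Schwarz gives
\[
\mathbb{E}\|\vct{\xi}\|_1 \;\le\; \sum_{i=1}^n \sqrt{f_i + 1} \;\le\; \sqrt{n(\|\vct{f}\|_1 + n)} \;\le\; n + \sqrt{n\|\vct{f}\|_1}.
\]
The second is a trivial bound exploiting nonnegativity of $\vct d$ and $\vct f$: $|\xi_i| \le d_i + f_i$, so $\mathbb{E}\|\vct{\xi}\|_1 \le 2 \|\vct{f}\|_1$ since $\mathbb{E}[d_i] \le f_i$. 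Applying Markov's inequality to the smaller of the two expectations yields, with probability at least $0.99$,
\[
\|\vct{\xi}\|_1 \;\le\; C_1 \min\bigl(n + \sqrt{n\|\vct{f}\|_1},\; \|\vct{f}\|_1\bigr)
\]
for some absolute constant $C_1 > 0$.

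On this event the two cross terms contribute $2\|\vct{f}\|_1\|\vct{\xi}\|_1 \lesssim \|\vct{f}\|_1(n + \sqrt{n\|\vct{f}\|_1})$, which is already of the target order. The main obstacle is the quadratic term $\|\vct{\xi}\|_1^2$: the Chebyshev bound alone gives $(n + \sqrt{n\|\vct{f}\|_1})^2$, and the $n^2$ piece overshoots the target $(n + \sqrt{n\|\vct{f}\|_1})\|\vct{f}\|_1$ whenever $\|\vct{f}\|_1 \ll n$. This is precisely where the second (trivial) moment bound pays off. I would handle it with a case split on $\|\vct{f}\|_1$ versus $n$: if $\|\vct{f}\|_1 \le n$, the trivial bound gives $\|\vct{\xi}\|_1^2 \le C_1^2 \|\vct{f}\|_1^2 \le C_1^2\, n \|\vct{f}\|_1$; if $\|\vct{f}\|_1 > n$, the Chebyshev bound gives $\|\vct{\xi}\|_1^2 \lesssim n^2 + n\|\vct{f}\|_1 \lesssim n\|\vct{f}\|_1$. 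In either regime $\|\vct{\xi}\|_1^2 \lesssim n \|\vct{f}\|_1 \le (n + \sqrt{n\|\vct{f}\|_1})\|\vct{f}\|_1$, and combining this with the cross-term estimate completes the proof.
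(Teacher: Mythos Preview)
Your argument is correct. The core idea---bounding $\|\vct{d}-\vct{f}\|_1$ via the variance estimate $\Var(d_i)\le f_i$ and Markov's inequality---is exactly what the paper does. The only real difference is the rank-one decomposition you choose. You expand around $\vct{f}$ to get three terms $\vct{f}\vct{\xi}^\top + \vct{\xi}\vct{f}^\top + \vct{\xi}\vct{\xi}^\top$, which forces you to deal with the quadratic $\|\vct{\xi}\|_1^2$ and hence to introduce the second ``trivial'' moment bound $\mathbb{E}\|\vct{\xi}\|_1 \le 2\|\vct{f}\|_1$ and the case split on $\|\vct{f}\|_1$ versus~$n$. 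The paper instead writes
\[
\vct{f}\vct{f}^\top - \vct{d}\vct{d}^\top = \vct{f}(\vct{f}-\vct{d})^\top + (\vct{f}-\vct{d})\vct{d}^\top,
\]
so that both norms are bounded by $\|\vct{f}-\vct{d}\|_1\bigl(\|\vct{f}\|_1+\|\vct{d}\|_1\bigr)$; it then controls $\|\vct{d}\|_1$ by a second application of Markov (using $\mathbb{E}\|\vct{d}\|_1 \le \|\vct{f}\|_1$), combining the two via a union bound. This sidesteps the quadratic term and the case analysis entirely. Your route is self-contained on a single high-probability event; the paper's is a line shorter but spends two Markov applications. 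Either way the bound and the constants are the same order.
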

\begin{proof}
Since
\[
\vct{f}\vct{f}^\top - \vct{d} \vct{d}^\top  = \vct{f}(\vct{f}-\vct{d})^\top + (\vct{f} - \vct{d})\vct{d}^\top,
\]
we have
\begin{align*}
 \| \vct{f}\vct{f}^\top - \vct{d} \vct{d}^\top \|_{\infty \to 1} &\leq   \|\vct{f}(\vct{f}-\vct{d})^\top\|_{\infty \to 1} + \|(\vct{f} - \vct{d})\vct{d}^\top \|_{\infty \to 1}
 \\
 & = \|\vct{f}\|_1\|\vct{f}-\vct{d}\|_1 + \|\vct{f}-\vct{d}\|_1\|\vct{d}\|_1
 \\
 & = \|\vct{f}-\vct{d}\|_1(\|\vct{f}\|_1 + \|\vct{d}\|_1), 
\end{align*}
and
\begin{align*}
 \| \vct{f}\vct{f}^\top - \vct{d} \vct{d}^\top \|_{1} &\leq   \|\vct{f}(\vct{f}-\vct{d})^\top\|_{1} + \|(\vct{f} - \vct{d})\vct{d}^\top \|_{1}
 \\
 & = \|\vct{f}\|_1\|\vct{f}-\vct{d}\|_1 + \|\vct{f}-\vct{d}\|_1\|\vct{d}\|_1
 \\
 & = \|\vct{f}-\vct{d}\|_1(\|\vct{f}\|_1 + \|\vct{d}\|_1).
\end{align*}
We bound $\| \vct{f} - \vct{d} \|_1$ and $ \| \vct{f} \|_1 $ separately.  For each $i \in C_a^*$, there holds $\E d_i = f_i - \theta_i^2 B_{aa}$ and
\[
\Var(d_i) = \sum_{j=1}^n \Var(A_{ij}) \leq \sum_{j=1}^n \E(A_{ij}) = \E(d_i) = f_i - \theta_i^2B_{aa} \leq f_i.
\]
Therefore, we have
\[
\E \left|f_i - \theta_i^2 B_{aa} - d_i\right| \leq \sqrt{\E  \left|f_i - \theta_i^2 B_{aa} - d_i\right|^2} = \sqrt{\Var(d_i)} \leq \sqrt{f_i},
\]
which implies that $\E\left| f_i - d_i \right| \leq 1+ \sqrt{f_i}$ and
\[
\E \sum_{i=1}^n \left| f_i - d_i \right| \leq n + \sum_{i=1}^n \sqrt{f_i} \leq n + \sqrt{n\|\vct{f}\|_1}.
\]
By Markov's inequality, with probability $0.995$, there holds
\[
\|\vct{f} - \vct{d} \|_1 \leq C(n + \sqrt{n \|\vct{f}\|_1})
\]
for an absolute constant $C$.
To bound $ \| \vct{d} \|_1  $, we observe that since $\E d_i \leq f_i$ and $d_i \ge 0$, there holds $\E \|\vct{d}\|_1 \leq \|\vct{f}\|_1$. By Markov's inequality, with probability at least $0.995$, there holds $\|\vct{d}\|_1 \leq C\|\vct{f}\|_1$ for some absolute constant. Combining these bounds on $ \|\vct{f} - \vct{d} \|_1 $ and $ \| \vct{d} \|_1  $ proves \prettyref{lmm:fd}.
\end{proof}

\subsection{Proof of \prettyref{thm:approx}}
\label{sec:proof_approx}

Recall that the vector $\vct{f} \in \mathbb{R}^n$ is defined by letting $f_i = \theta_i H_a$  for $i \in C_a^*$,
where  $H_a$ is defined in \prettyref{eq:GandH}. It follows from the optimality of $\Yhat$ that
\begin{align*}
0 & \le \langle  \Yhat -\mtx{Y}^\ast , \mtx{A} -\lambda \vct{d}\vct{d}^\top \rangle
\\
& = \underbrace{\langle  \Yhat -\mtx{Y}^\ast , \E \mtx{A}  -\lambda \vct{f}\vct{f}^\top \rangle}_{S_1}
+ \underbrace{\lambda \langle  \Yhat -\mtx{Y}^\ast ,   \vct{f}\vct{f}^\top - \vct{d}\vct{d}^\top \rangle}_{S_2}
+ \underbrace{\langle  \Yhat -\mtx{Y}^\ast ,  \mtx{A} - \E \mtx{A} \rangle}_{S_3} .
\end{align*}
We control the terms $ S_1 $, $ S_2 $ and $ S_3 $ separately below.

\paragraph{Upper bound for $S_1$} 
For each pair $i, j \in C_a^*$ and $i \neq j$, we have $\widehat{Y}_{ij} - Y^*_{ij} \leq 0$, $\E(A_{ij})=\theta_i \theta_j B_{aa}$, and $f_i f_j = \theta_i \theta_j H_aH_b$. Hence the condition \prettyref{eq:RDGC} implies that $\E(A_{ij}) - \lambda f_i f_j \geq \delta \theta_i \theta_j$, whence
\[
(\widehat{Y}_{ij} - Y^*_{ij}) (\E(A_{ij}) - \lambda f_i f_j ) \leq -\delta \theta_i \theta_j |\widehat{Y}_{ij} - Y^*_{ij}|.
\]
Similarly, for each pair $i \in C_a^*$ and $j \in C_b^*$ with  $1\leq a < b \leq r$, we have
\[
(\widehat{Y}_{ij} - Y^*_{ij}) (\E(A_{ij}) - \lambda f_i f_j ) \leq -\delta \theta_i \theta_j |\widehat{Y}_{ij} - Y^*_{ij}|.
\]
Combining the last two inequalities, we obtain the bound
\begin{equation*}
 S_1 :=\langle  \Yhat -\mtx{Y}^\ast , \E \mtx{A}  -\lambda \vct{f}\vct{f}^\top \rangle \leq   - \delta  \|\Ystar - \widehat{ \mtx{Y} } \|_{1, \vct{\theta}}.
\end{equation*}

\paragraph{Upper bound for $S_2$} 
By Grothendieck's inequality~\citep{Grothendieck, Lindenstrauss} we have
\begin{align*}
 \langle  \Yhat -\mtx{Y}^\ast ,  \vct{f}\vct{f}^\top - \vct{d} \vct{d}^\top \rangle &  \le 2 \sup_{\mtx{Y} \succeq 0, \diag(\mtx{Y}) = \mtx{I}  } \big|  \langle \mtx{Y} ,  \vct{f}\vct{f}^\top - \vct{d} \vct{d}^\top \rangle \big| \\
 & \leq 2 K_G \| \vct{f}\vct{f}^\top - \vct{d} \vct{d}^\top \|_{\infty \to 1},
\end{align*}
where $K_G$ is Grothendieck's constant known to satisfy $K_G \le 1.783$.
Since $\lambda \leq \min_{1 \leq a \leq r} \frac{B_{aa}}{H_a^2}$, applying \prettyref{lmm:fd} on  $ \| \vct{f}\vct{f}^\top - \vct{d} \vct{d}^\top \|_{\infty \to 1} $ ensures that with probability at least $0.99$,
\[
S_2 \leq C\left(\min_{1 \leq a \leq r} \frac{B_{aa}}{H_a^2}\right)  \|\vct{f}\|_1 \left(  \sqrt{n\|\vct{f}\|_1 }  + n \right)
\]
for some absolute constant $C$.

\paragraph{Upper bound for $S_3$}  
Observe that
\begin{align*}
 \langle  \Yhat -\mtx{Y}^\ast ,  \mtx{A} - \E \mtx{A} \rangle  \le 2 \sup_{\mtx{Y} \succeq 0, \diag \mtx{Y} = \mtx{I}  } \big|  \langle \mtx{Y} ,  \mtx{A} - \E \mtx{A} \rangle \big|.
\end{align*}
It follows from Grothendieck's inequality that
\begin{align*}
 \sup_{\mtx{Y} \succeq 0, \diag(\mtx{Y}) = \mtx{I}  } \big|  \langle \mtx{Y} ,  \mtx{A} - \E \mtx{A} \rangle \big| \le K_G  \| \mtx{A} - \E \mtx{A} \|_{\infty \to 1} .
\end{align*}
The norm on the last RHS can be expressed as
\begin{align*}
 \| \mtx{A} - \E \mtx{A} \|_{\infty \to 1} = \sup_{\vct{x} : \| \vct{x} \|_\infty \le 1} \| ( \mtx{A} - \E \mtx{A} ) \vct{x} \|_1 = \sup_{\vct{x}, \vct{y} \in \{\pm 1\}^n } | \vct{x}^\top ( \mtx{A} - \E \mtx{A} ) \vct{y} |.
\end{align*}
For each fixed pair of sign vectors $\vct{x}, \vct{y} \in \{ \pm 1 \}^n$, Bernstein's inequality ensures that for each $t >0$, with probability at most $2e^{-t}$ there holds the inequality
\begin{align*}
| \vct{x}^\top ( \mtx{A} - \E \mtx{A} ) \vct{y} | \ge \sqrt{ 8 t \sigma^2 } + \frac{4}{3} t ,
\end{align*}
where $
\sigma^2 := \sum_{i<j} \var A_{ij}  \le \frac{1}{2} \sum_{a, b=1}^{r} B_{ab} G_a G_b = \frac{1}{2} \| \vct{f} \|_1 .$
Setting $t=2n$ and applying the union bound over all sign vectors, we obtain that with probability at most $2(e/2)^{-2n}$,
\begin{align*}
 \| \mtx{A} - \E \mtx{A} \|_{\infty \to 1}  \ge \sqrt{ 8 n \| \vct{f} \|_1 } + \frac{8}{3} n.
\end{align*}
It follows that with probability at least $1- 2(e/2)^{-2n}$,
$$
S_3 \le 2 K_G  \sqrt{ 8 n \| \vct{f} \|_1 } + \frac{16 K_G}{3} n.
$$

Putting together the bounds for $ S_1 $, $ S_2 $ and $ S_3 $, we conclude that with probability at least $0.99- 2 (e/2)^{-2n}$,
the bound \prettyref{eq:pillar1} holds.

\subsection{Proof of \prettyref{thm:kmedian}}
\label{sec:proof_kmedian}

Recall that $(\overline{\mtx{\Psi}}, \overline{\mtx{X}})$ is the exact optimal solution to the weighted $ k $-median problem~\prettyref{eq:k-median}, $(\PsiCheck, \XCheck)$ is the approximate solution, and $\widehat{\mtx{W}}:= \Yhat\mtx{D}$ is the column-weighted version of the solution $ \Yhat $ to the convex program~\prettyref{eq:cvx}. The last constraint in \prettyref{eq:k-median} ensures that the row vectors of $\overline{\mtx{X}}$ and $\XCheck$ are subsets of the row vectors of $\widehat{\mtx{W}}$. If we  define the matrices $\overline{\mtx{W} }:=\overline{\mtx{\Psi}} \; \overline{\mtx{X}}$ and $\widecheck{\mtx{W}} :=\PsiCheck \; \XCheck$, then the row vectors of $\overline{\mtx{W} }$ and $\widecheck{\mtx{W}}$ are also subsets of the row vectors of $\widehat{\mtx{W}}$. For any matrix $\mtx{M}$, we let $\mtx{M}_{i \bullet}$ denote the $i$-th row vector of $\mtx{M}$, and $\mtx{M}_{ \bullet j}$ the $j$-th column vector of $\mtx{M}$. 
At a high level, we prove \prettyref{thm:kmedian} by translating 
the upper bound on the weighted error $ \| \widehat{\mtx Y} - \Ystar \|_{1, \vct{\theta}} $, given in~\prettyref{eq:pillar2} in~\prettyref{cor:Fmodel},
to an upper bound on the weighted misclassification rate defined in \prettyref{def:calE}.
This is done in three steps.

\paragraph*{Step 1}

As shown in Section \ref{sec:kmedian}, the true partition matrix admits the decomposition $\Ystar= \mtx{\Psi}^\ast ( \mtx{\Psi}^\ast)^\top$, where $\mtx{\Psi}^\ast \in \mathbb{M}_{n, r}$ is the true membership matrix.
Letting $\mtx{W}^*:=\Ystar\mtx{D} \in \mathbb{R}^{n\times n}$ and $\mtx{X}^* := (\mtx{\Psi}^\ast)^\top \mtx{D} \in \mathbb{R}^{r \times n},$  we have the expression $\mtx{W}=\mtx{\Psi}^\ast \mtx{X}^*$.
We now define a matrix $\widetilde{\mtx{X}} \in \mathbb{R}^{r \times n}$  by setting its $ k $-th row to 
\[
\widetilde{\vct{X}}_{k \bullet}:=\arg \min\limits_{\vct{x} \in \{\widehat{\mtx{W}}_{i \bullet}: i \in C_k^*\}} \|\vct{x} - \vct{X}^*_{k \bullet }\|_1,
\quad \text{for each~} k=1, \ldots, r.
\]
Note that $ \widetilde{\vct{X}} $ also satisfies $\text{Rows}(\widetilde{\mtx{X}}) \subseteq \text{Rows}(\widehat{\mtx{W}})$.
Set $\widetilde{\mtx{W}} := \mtx{\Psi}^\ast \widetilde{\mtx{X}} \in \mathbb{R}^{n \times n}$. By definition we have the inequality
\begin{align*}
\|\mtx{D}(\widehat{\mtx{W}} - \mtx{W}^* )\|_1
&= \sum_{k=1}^r \sum_{i \in C_k^*} d_i \|\widehat{\vct{W}}_{i \bullet } -\vct{X}^*_{k \bullet}\|_1 \\
&\geq \sum_{k=1}^r \sum_{i \in C_k^*} d_i \|\widetilde{\vct{X}}_{k \bullet } - \vct{X}^*_{k \bullet}\|_1 
= \|\mtx{D}(\widetilde{\mtx{W}} - \mtx{W}^* ) \|_1,
\end{align*}
which implies that $\|\mtx{D}(\widetilde{\mtx{W}} - \widehat{\mtx{W}})\|_1 \leq \|\mtx{D}(\widehat{\mtx{W}} - \mtx{W}^* ) \|_1 + \|\mtx{D}(\widetilde{\mtx{W}} - \mtx{W}^* )\|_1 \leq 2\|\mtx{D}(\widehat{\mtx{W}} - \mtx{W}^*)\|_1$. Since $(\mtx{\Psi}^\ast, \widetilde{\mtx{X}})$ is feasible to the optimization \prettyref{eq:k-median}, we have
\[
\|\mtx{D}(\overline{\mtx{W}} - \widehat{\mtx{W}})\|_1 \leq \|\mtx{D}(\widetilde{\mtx{W}} - \widehat{\mtx{W}})\|_1\leq 2\|\mtx{D}(\widehat{\mtx{W}} - \mtx{W}^* )\|_1,
\]
whence
\[
\|\mtx{D}( \widecheck{\mtx{W}}  - \widehat{\mtx{W}})\|_1 \le \frac{20}{3} \|\mtx{D}( \overline{ \mtx{W} } - \widehat{\mtx{W}})\|_1 \leq \frac{40}{3} \|\mtx{D}(\widehat{\mtx{W}} - \mtx{W}^* )\|_1.
\]
Putting together, we obtain that
\begin{align}
\|\mtx{D}(\widecheck{\mtx{W}} - \mtx{W}^* )\|_1 
\leq \|\mtx{D}(\widehat{\mtx{W}} - \mtx{W}^* )\|_1 + \|\mtx{D}(\widecheck{\mtx{W}} - \widehat{\mtx{W}})\|_1
\leq \frac{43}{3}\|\mtx{D}(\widehat{\mtx{W}} - \mtx{W}^* )\|_1.
\label{eq:DWbound}
\end{align}
Define a matrix $\widecheck{\mtx{Y}} \in \reals^{n \times n} $ by
\begin{align}
\widecheck{Y}_{ i j} =
\begin{cases}
  \widecheck{W}_{i j}/d_j,   & \text{ if } d_j>0, \\
 0,  &  \text{ if }  d_j=0. 
\end{cases}
\label{eq:defcheckY}
\end{align}
For each $j \in [n]$, if $ d_j>0$, then it follows from the above definition
that $\widecheck{W}_{ij}=\widecheck{Y}_{ij}d_j$. Suppose $d_j=0$; because
 $\text{Rows} ( \widecheck{\mtx{W}} ) \subseteq \text{Rows} ( \widehat{\mtx{W}} )$ and
$\widehat{\mtx{W}}\mtx = \Yhat \mtx{D}$,  for each $i \in [n]$, there exists an index $i' \in [n]$ such that 
$$
\widecheck{W}_{ij}=\hat{W}_{i' j} =\hat{Y}_{i'j} d_j =0 = \widecheck{Y}_{ij} d_j. 
$$
Putting together, we conclude that $\widecheck{\mtx{W}}=\widecheck{\mtx{Y} }\mtx{D}$. 
In view of the bound~\prettyref{eq:DWbound} and the definitions $\widehat{\mtx{W}} :=\Yhat \mtx{D}$ and $\mtx{W}^* :=\Ystar \mtx{D}$, 
we get that
\begin{equation}
\label{eq:checkhat}
\begin{aligned}
\| \widecheck{\mtx Y} - \Ystar \|_{1, \vct{d}} 
& =\| \mtx{D} ( \widecheck{\mtx Y} - \Ystar ) \mtx{D} \|_1 \\ &=\|\mtx{D}(\widecheck{\mtx{W}} - \mtx{W}^* )\|_1 \\
& \le \frac{43}{3}\|\mtx{D}(\widehat{\mtx{W}} - \mtx{W}^* )\|_1 \\
&= \frac{43}{3} \|  \mtx{D} ( \Yhat - \Ystar ) \mtx{D}  \|_1  
  =\frac{43}{3} \| \Yhat - \Ystar \|_{1, \vct{d}},
\end{aligned}
\end{equation}
where the weighted $ \ell_1 $ norm $ \| \cdot \|_{1, \vct{d}} $ is defined analogously to $ \| \cdot \|_{1, \vct{\theta}}  $ in~\prettyref{def:weighted_norm}. 

\paragraph*{Step 2}

The bound in~\prettyref{eq:checkhat} is weighted by the empirical degrees $ \vct{d} $. Our next step is to convert this bound into one that is weighted by the population quantity $ \vct{f} $. 
Recall that $\text{Rows} ( \widecheck{\mtx{W}} ) \subseteq \text{Rows} ( \widehat{\mtx{W}} )$ and
$\widehat{\mtx{W}}\mtx = \Yhat \mtx{D}$. If $d_j>0$, then for any $i \in [n]$, there exists an $i'$ such that 
$$
\widecheck{Y}_{ij} = \widecheck{W}_{ij}/d_j = \widehat{W}_{i' j} /d_j = \widehat{Y}_{i' j}.
$$
Since $\Yhat $ is feasible to the convex relaxation~\prettyref{eq:cvx}, we have $\mtx{0} \leq \Yhat \leq \mtx{J}$. It follows that $\mtx{0} \leq \widecheck{\mtx{Y}} \leq \mtx{J}$ and hence $\| \widecheck{\mtx{Y}} - \Ystar \|_{\infty} \leq 1$. 
Setting $M:= \|\vct{f}\vct{f}^T - \vct{d}\vct{d}^T\|_1$, we observe that any matrix $\mtx{Z}$ satisfies the bound
\[
\left|\|\mtx{Z}\|_{1, \vct{f}} - \|\mtx{Z}\|_{1, \vct{d}}\right| \leq \|\mtx{Z} \circ (\vct{f}\vct{f}^T - \vct{d}\vct{d}^T)\|_1 \leq M \|\mtx{Z}\|_\infty,
\]
where $ \|\mtx{Z}\|_{1, \vct{f}} $ and $\|\mtx{Z}\|_{1, \vct{d}}$ are defined in the same fashion as $\|\mtx{Z}\|_{1, \vct{\theta}}$ given in \prettyref{def:weighted_norm}. 
Therefore, the bound~\prettyref{eq:checkhat} implies that
\begin{align}
\| \widecheck{\mtx Y} - \Ystar \|_{1, \vct{\theta}} &= \frac{1}{h^2}\| \widecheck{\mtx Y} - \Ystar \|_{1, \vct{f}}  \le \frac{1}{h^2}\left(\| \widecheck{\mtx Y} - \Ystar \|_{1, \vct{d}} +M\right)
\nonumber \\
&\lesssim \frac{1}{h^2}\left(\| \widehat{\mtx Y} - \Ystar \|_{1, \vct{d}} +M\right) \leq \frac{1}{h^2}\left(\| \widehat{\mtx Y} - \Ystar \|_{1, \vct{f}} +2M\right)
\nonumber \\
& \lesssim \| \widehat{\mtx Y} - \Ystar \|_{1, \vct{\theta}} + \frac{M}{h^2}.
\label{eq:perturb_bound}
\end{align}
To bound the second term above, we apply \prettyref{lmm:fd} to get that with probability at least 0.99,
\begin{equation*}
\label{eq:Mbound}
M \leq C \|\vct{f}\|_1 (\sqrt{n \|\vct{f}\|_1} + n).
\end{equation*}
Also note that under the $\mathcal{F}(n, r, p, q, g)$-model,
\begin{equation*}
\label{eq:h}
H_1 = \cdots = H_r = h: = (p+(r-1)q)g ,
\end{equation*}
 which implies that $f_i = \theta_i h, \forall i \in [n]$ and
\begin{equation*}
\label{eq:fl1}
 \|\vct{f}\|_1 = r(p+(r-1)q)g^2 .
\end{equation*}
Combining the last three equations gives the following bound on the second term of~\prettyref{eq:perturb_bound}:
\begin{align}
\frac{M}{h^2} \lesssim \frac{r ( \sqrt{ n \| \vct{f} \|_1} +n ) } {p+ (r-1)q} \le \frac{r ( r g \sqrt{np} + n ) }{ \delta}.
\label{eq:Mh2bound}
\end{align}
We can control the first term in~\prettyref{eq:perturb_bound} using the bound~\prettyref{eq:pillar2} in~\prettyref{cor:Fmodel}. 
Putting together, straightforward calculation yields the inequality
\begin{equation}
\label{eq:pillar3}
\| \widecheck{\mtx Y} - \Ystar \|_{1, \vct{\theta}} \lesssim \frac{1}{\delta} r (n + rg\sqrt{np}).
\end{equation}

\paragraph*{Step 3}
Recall that $\diag(\vct{\theta})$ is the $n\times n$ diagonal matrix whose diagonal entries are correspondingly the entries of $\vct{\theta}$.
For each $a=1,\ldots,r$, define the set of node indices
\[
S_{a} := \left\{ i\in C_{a}^{*}:\|(\widecheck{\vct{Y}}_{i \bullet }-\vct{Y}_{i \bullet}^{*}) \diag(\vct{\theta}) \|_{1}\geq g \right\} ,
\]
and let $S := \bigcup_{a=1}^{r}S_{a}$. It follows from~\prettyref{eq:pillar3} that
\begin{equation}
\label{eq:Sbound0}
\begin{aligned}
\sum_{i\in S}\theta_{i} &\le\sum_{i=1}^{n}\frac{\theta_{i}}{g}\|(\widecheck{\vct{Y}}_{i \bullet }-\vct{Y}_{i \bullet}^{*})\diag(\vct{\theta}) \|_{1} \\
&= \frac{1}{g}\|\widecheck{\vct{Y}}-\mtx{Y}^{*}\|_{1, \vct{\theta}} \lesssim \frac{1}{\delta g} r (n + rg\sqrt{np}).
\end{aligned}
\end{equation}

Consider the set $T_{a}:=C_{a}^{*} \backslash S_{a}$ for each $a = 1, \ldots, r$. There are three cases for each $T_a$. In the first case, $T_a = \emptyset$, and we denote by $R_1$ the collection of all such indices $a$. 
In the second case, $T_a \neq \emptyset$ and  $\vct{\widecheck{\Psi}}_{i \bullet} = \vct{\widecheck{\Psi}}_{j \bullet}$
for all $i, j \in T_a$. 
We say that these $T_a$'s are pure, and denote by $R_2$ the collection of all such indices $ a $. Finally, we set $R_3 := \{1, \ldots, r\} \backslash (R_1 \cup R_2)$; for each $a \in R_3$, we say that $T_a$ is impure since there exist $ i,j \in T_a $ such that $\vct{\widecheck{\Psi}}_{i \bullet} \neq \vct{\widecheck{\Psi}}_{j \bullet}$.

For each $a \in R_1$, we have $S_a = C_a^*$, which implies that
\begin{equation}
\label{eq:R1_upper}
\sum_{i \in S} \theta_i \geq \sum_{i \in \bigcup_{a \in R_1} C_a^*} \theta_i = |R_1|g.
\end{equation}
For each pair $a, b \in R_2 \cup R_3$ with $a\neq b$, by definition we know that $T_a \neq \emptyset$ and $T_b \neq \emptyset$. Then for each pair $i\in T_{a} \subseteq C_a^*$, $j\in T_{b} \subseteq C_b^*$, we have
\begin{align*}
&\|\vct{\widecheck{Y}}_{i \bullet }\diag(\vct{\theta})-\vct{\widecheck{Y}}_{j \bullet }\diag(\vct{\theta})\|_{1} 
\\
& \geq\|(\vct{Y}_{i \bullet}^{*}-\vct{Y}_{j \bullet}^{*})\diag(\vct{\theta}) \|_{1}-\|(\vct{Y}_{i \bullet}^{*}-\vct{\widecheck{Y}}_{i\bullet})\diag(\vct{\theta}) \|_{1}-\|(\vct{Y}_{j\bullet}^{*}-\vct{\widecheck{Y}}_{j\bullet})\diag(\vct{\theta}) \|_{1}\\
 & >2g-g-g = 0,
\end{align*}
whence $\vct{\widecheck{Y}}_{i \bullet}  \neq \vct{\widecheck{Y}}_{j \bullet}$. 
We claim that this implies $\vct{\widecheck{\Psi}}_{i \bullet}  \neq \vct{\widecheck{\Psi}}_{j \bullet}$. 
Suppose that this claim is not true.  For each $k \in [n]$,  if $d_k=0$, then $\widecheck{Y}_{ik}= \widecheck{Y}_{jk}=0$ in view of the definition of $\widecheck{\mtx{Y}}$ in \prettyref{eq:defcheckY};
if $d_k>0$, then the definition $\widecheck{\mtx{W}} = \widecheck{\mtx{\Psi}} \widecheck{\mtx{X}}$ implies that
$$
\widecheck{Y}_{ik} = \frac{1}{d_k} \widecheck{W}_{ik} = 
\frac{1}{d_k}  \Iprod{\vct{\widecheck{\Psi}}_{i \bullet} }{ \vct{ \widecheck{X}_{\bullet k} } }
=\frac{1}{d_k}  \Iprod{\vct{\widecheck{\Psi}}_{j \bullet} }{ \vct{ \widecheck{X}_{\bullet k} } } =\frac{1}{d_k} \widecheck{W}_{jk} =
\widecheck{Y}_{jk}.
$$
Therefore, we have $\vct{\widecheck{Y}}_{i \bullet}  = \vct{\widecheck{Y}}_{j \bullet}$, which is a contradiction.

In conclusion, we have proved that for each pair $a, b \in R_2 \cup R_3$ with $a \neq b$ and each pair $i\in T_{a}$, $j\in T_{b}$, we have $\vct{\widecheck{\Psi}}_{i \bullet}  \neq \vct{\widecheck{\Psi}}_{j \bullet}$. 
Moreover, since for each $a \in R_2$, the set $ T_a $ is pure  by definition,  there exists a permutation matrix $\mtx{\Pi} \in \mathcal{S}_r$ such that for all $i \in \bigcup_{a \in R_2} T_a$, there holds $(\widecheck{\mtx{\Psi}} \mtx{\Pi})_{i \bullet} = \mtx{\Psi}^*_{i \bullet}$. Recalling~\prettyref{def:calE}, we conclude that the set $(\bigcup_{a \in R_3} T_a) \bigcup S$ contains the misclassified node set with respect to $ \mtx{\Pi} $. It follows that
\begin{equation}
\label{eq:mis_upper}
 \textstyle{\sum_{i \in \mathcal{E}(\mtx{\Pi})} \theta_{i} } \leq \sum_{i \in S} \theta_i + \sum_{i \in \bigcup_{a \in R_3} T_a} \theta_i \leq \sum_{i \in S} \theta_i + |R_3| g.
\end{equation}

The matrix $\PsiCheck$ consists of at most $r$ distinct row vectors. Because $R_2$ is pure and $R_3$ is impure by definition, we have the inequality
\[
|R_2| + 2|R_3| \leq r = |R_1| + |R_2| + |R_3|,
\]
which implies that 
\begin{equation}
\label{eq:R1_R3}
|R_3| \leq |R_1|.
\end{equation}

Applying the bounds~\prettyref{eq:mis_upper}, \prettyref{eq:R1_R3}, \prettyref{eq:R1_upper} and~\prettyref{eq:Sbound0} in order, we obtain
\begin{align*}
 \textstyle{ \sum_{i \in \mathcal{E}(\mtx{\Pi})} } \theta_{i} &\le 2\sum_{i\in S}\theta_{i}  \lesssim \frac{1}{\delta g} r (n + rg\sqrt{np}),
\end{align*}
thereby proving the inequality~\prettyref{eq:Sbound} in the theorem.

\subsection{Proof of \prettyref{thm:exact}}

Let us first recall and introduce some key notations and definitions. For each $k \in [r]$, define $\vct{\theta}^{(a)} \in \reals_+^n$  such that $\theta^{(a)}_i=\theta_i$ if $i \in C^\ast_a$ and $ \theta^{(a)}_i=0$ otherwise.  
For any vector $\vct{v} \in \reals^n$ and $1 \le a \le r$, we let $\vct{v}_{(a)} \in \reals^{\ell_a}$ 
denote the restriction of $\vct{v}$ to entries in $C^*_a$. 
For any matrix $\mtx{M} \geq \mtx{0}$,  let $\mtx{M}^{\frac{1}{2}}$ denote the matrix such that its $(i,j)$-entry is $\sqrt{M_{ij}}$. Similarly, for any matrix $\mtx{M} > \mtx{0}$, let $\mtx{M}^{-\frac{1}{2}}$ denote the matrix such that its $(i,j)$-entry is $\frac{1}{\sqrt{M_{ij}}}$.

Let $G:=\sum_{i=1}^n \theta_i = \|\vct{\theta}\|_1$ and we have for all $1 \leq a \leq r$,
\[
H_a = \sum_{k=1}^r B_{ak}G_k= qG + (p - q)G_a.
\]
A simple implication is
\[
H_a \geq qG_a + (p - q)G_a=pG_a \geq p G_{\min}.
\]
By the assumption \prettyref{eq:assump} and the fact that $\delta < p,$
\begin{align}
\label{eq:lower_sim}
C_0^2 \frac{\log n}{G_{\min}\theta_{\min}}\leq \frac{\delta^2}{p} < \delta.
\end{align}
~\\

We now turn to the proof of \prettyref{thm:exact}. 
In the proof we make use of several technical lemmas given in the Appendix.
\begin{proof}
Recall that
$
f_i = \theta_i H_a,
$
for each $i \in C^\ast_a$, $a=1, \ldots, r$. 
The following lemma, complementing \prettyref{lmm:fd}, characterizes the relationship between the degrees $d_1, \ldots, d_n$ and the population quantities $f_1, \ldots, f_n$.

\begin{lemma}
\label{lmm:degrees}
With probability at least $1 - \frac{2}{n^2}$, for all $i=1, \ldots, n$,
	\begin{equation}
	\label{eq:d_bound1}
	|d_i - f_i|\leq \max(\sqrt{12 f_i \log n}, 4 \log n + 1).
	\end{equation}
If we further assume that the condition~\eqref{eq:assump} holds with some large enough numerical constant $C_0$, there holds for all $i=1, \ldots, n$
	\begin{equation}
	\label{eq:d_bound2}
	|d_i - f_i|\leq \frac{\delta}{5p}f_i.
	\end{equation}
\end{lemma}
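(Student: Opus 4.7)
My plan is to prove the two inequalities successively via a standard Bernstein argument followed by a direct comparison with the hypothesis \eqref{eq:assump}.

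For the first bound \eqref{eq:d_bound1}, I would write $d_i = \sum_{j \neq i} A_{ij}$ as a sum of independent Bernoulli random variables with parameters $p_{ij} = \theta_i \theta_j B_{ab(j)}$, where $a$ is the cluster of $i$ and $b(j)$ is the cluster of $j$. A direct calculation gives $\E d_i = f_i - \theta_i^2 B_{aa}$, so that $|\E d_i - f_i| \leq \theta_i^2 B_{aa} \leq 1$ since $\theta_i \leq 1$ and $B_{aa} \leq 1$. The variance is bounded by $\var(d_i) \leq \E d_i \leq f_i$, and each summand $A_{ij}$ is bounded by $1$. Applying Bernstein's inequality with deviation parameter $t = 3 \log n$ and then taking a union bound over $i \in [n]$ yields, with probability at least $1 - 2/n^2$,
\[
|d_i - \E d_i| \leq \sqrt{6 f_i \log n} + 2 \log n \quad \text{for all } i\in[n].
\]
Combining with $|\E d_i - f_i| \leq 1$ and absorbing the additive constant by splitting into the two cases (according to which of $\sqrt{6 f_i \log n}$ or $2\log n$ dominates), I can rewrite the resulting bound in the stated form $\max(\sqrt{12 f_i \log n}, 4 \log n + 1)$.

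For the second bound \eqref{eq:d_bound2}, the idea is to show that under \eqref{eq:assump} both terms inside the maximum of \eqref{eq:d_bound1} are dominated by $\tfrac{\delta}{5p} f_i$. Since $f_i = \theta_i H_a \geq \theta_{\min}\, p \, G_{\min}$ for every $i$, the inequality \eqref{eq:assump} together with its consequence \eqref{eq:lower_sim} gives the lower estimate
\[
f_i \;\geq\; p\, \theta_{\min} G_{\min} \;\gtrsim\; \frac{p^2 \log n}{\delta^2},
\]
provided $C_0$ is chosen sufficiently large. This immediately yields $\sqrt{12 f_i \log n} \leq \tfrac{\delta}{5p} f_i$. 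Similarly, from the same lower bound on $f_i$ and \eqref{eq:lower_sim}, one verifies $4 \log n + 1 \leq \tfrac{\delta}{5p} f_i$ as long as $C_0$ is large enough. Combining the two cases finishes the proof of \eqref{eq:d_bound2}.

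The argument is essentially routine; the only mild subtlety is bookkeeping the absolute constants so that the factor $\tfrac{1}{5p}$ in \eqref{eq:d_bound2} emerges from the numerical constants in Bernstein's inequality and in \eqref{eq:assump}. I expect no serious obstacle beyond choosing $C_0$ large enough to absorb both the multiplicative and additive branches of the maximum in \eqref{eq:d_bound1}.
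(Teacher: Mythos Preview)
Your proposal is correct and follows essentially the same route as the paper: a Bernstein/Chernoff concentration plus union bound for \eqref{eq:d_bound1}, and then the lower bound $f_i \ge p\,\theta_{\min} G_{\min}$ combined with assumption~\eqref{eq:assump} (and its consequence~\eqref{eq:lower_sim}) to absorb both branches of the maximum into $\tfrac{\delta}{5p} f_i$. The only cosmetic difference is that the paper dispatches the additive branch by first noting $\delta < p$ forces $\sqrt{12\log n/f_i}\le 1/5$, which makes the linear term $(4\log n+1)/f_i$ automatically smaller than the square-root term; your ``similarly'' covers the same ground.
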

\noindent We prove this lemma in \prettyref{sec:proof_lmm_degrees} to follow.

Back to the proof of \prettyref{thm:exact},
we assume without loss of generality that the nodes in the same cluster have adjacent indices. Recall that for $1\le a \le r$, $\ell_a$ is the size of community $a$. Then we have
\begin{equation}
\label{eq:output}
\Ystar=
\begin{bmatrix}
\mtx{J}_{l_1} &~ &~
\\
~ &\ddots & ~
\\
~ & ~ & \mtx{J}_{l_r}
\end{bmatrix} \in \Pnr.
\end{equation}
Recall the decomposition $\Ystar=\mtx{\Psi}^\ast (\mtx{\Psi}^\ast)^\top$, where
\[
\mtx{\Psi}^\ast:=[\vct{v}_1, \ldots, \vct{v}_r]:=
\begin{bmatrix}
\vct{1}_{l_1} & \vct{0} & \ldots & \vct{0}
\\
\vct{0} & \vct{1}_{l_2} & \ldots & \vct{0}
\\
\vdots & \vdots & \ddots & \vdots
\\
\vct{0} & \vct{0}& \ldots & \vct{1}_{l_r}
\end{bmatrix} \in \Mnr.
\]
To establish the theorem, it suffices to show for any feasible solution $\mtx{Y}$ with $\mtx{Y} \neq \mtx{Y}^\ast$,
\[
\Delta(\mtx{Y})  := \langle \mtx{Y}^\ast - \mtx{Y}, \mtx{A} -\lambda \vct{d}\vct{d}^\top \rangle >0.
\]

For a matrix $\mtx{X} \in \reals^{n \times n}$, let $\mtx{X}_w \in \reals^{n \times n}$ denote the matrix $\mtx{X}$ restricted to
entries in $\bigcup_{1 \le k \le r} C_k \times C_{k}$, and $ \mtx{X}_b \in \reals^{n \times n}$ denote the matrix $\mtx{X}$ restricted to entries in $\bigcup_{1 \le k < \ell \le r} C_{k} \times C_{\ell}$. It yields the decomposition $\mtx{X}=\mtx{X}_w + \mtx{X}_b$. Moreover, for each fixed pair $1 \leq a, b \leq r$, the submatrix of $\mtx{X}$ with entries in $C_{k} \times C_{\ell}$ is denoted as $\mtx{X}_{(a, b)} \in \mathbb{R}^{\ell_a \times \ell_b}$.

Let $\epsilon= \frac{\delta}{10}.$ We propose to decompose $\Delta(\mtx{Y})$ as
\begin{align}
\Delta(\mtx{Y})=&\langle \mtx{A}_w -\lambda (\vct{d}\vct{d}^\top)_w - \epsilon \left(\vct{\theta}\vct{\theta}^\top\right)_w, \mtx{Y}^\ast - \mtx{Y} \rangle  \nonumber
\\
                  &+\langle \epsilon \left(\vct{\theta}\vct{\theta}^\top\right)_w, \Ystar - \mtx{Y} \rangle \nonumber
\\
                  &+ \langle \E \mtx{A}_b - \lambda \left(\vct{d}\vct{d}^\top\right)_b, \Ystar - \mtx{Y} \rangle \nonumber
\\
                  &+ \langle \mtx{A}_b -\E \mtx{A}_b, \mtx{Y}^\ast - \mtx{Y} \rangle.  \nonumber
\\
              =:  & S_1 + S_2 + S_3 + S_4.                  \label{eq:DeltaY}
\end{align}
Below we establish lower bounds for the terms $S_1$, $S_2$, $S_3$ and $S_4$ respectively.

\paragraph{Lower bound of $S_1\,$}~\\
We plan to construct an $n \times n$ diagonal matrix $\mtx{D}$, such that with high probability, 
\begin{equation}
\label{eq:dual1}
\begin{cases}
\mtx{\Lambda}:= \mtx{D} + \epsilon \left(\vct{\theta}\vct{\theta}^\top\right)_w + \lambda \left(\vct{d}\vct{d}^\top\right)_w - \mtx{A}_w \succeq \mtx{0} ;
\\
\mtx{\Lambda}\mtx{\Psi}^\ast= \mtx{0}.
\end{cases}
\end{equation}
Such a diagonal matrix $\mtx{D}$ implies that with high probability, 
\begin{align*}
S_1&=\langle \mtx{A}_w -\lambda (\vct{d}\vct{d}^\top)_w - \epsilon \left(\vct{\theta}\vct{\theta}^\top\right)_w, \mtx{Y}^\ast - \mtx{Y} \rangle
\\
&  \overset{(a)}{=} \langle \mtx{A}_w -\lambda (\vct{d}\vct{d}^\top)_w - \epsilon \left(\vct{\theta}\vct{\theta}^\top\right)_w - \mtx{D}, \mtx{Y}^\ast - \mtx{Y} \rangle
\\
& \overset{(b)}{=} \langle -\mtx{\Lambda}, \mtx{Y}^\ast - \mtx{Y} \rangle \overset{(c)}{\geq} \langle -\mtx{\Lambda}, \mtx{Y}^\ast \rangle = 0,
\end{align*}
where the step $(a)$ follows $\diag(\Ystar)=\diag(\mtx{Y})=\mtx{I}_n$, $(b)$ follows from the definition of $\mtx{\Lambda}$,
$(c)$ holds due to $\mtx{Y} \succeq \mtx{0}$ and $\mtx{\Lambda} \succeq \mtx{0}$, and the last equality follows because $\mtx{\Lambda}\mtx{\Psi}^\ast= \mtx{0}$.

In what follows, we will show how to construct explicitly the diagonal matrix $\mtx{D}$ satisfying the condition~\eqref{eq:dual1}
with high probability. Notice that the condition~\eqref{eq:dual1} is equivalent to  that with high probability, for all $1 \leq a \leq r$,
\begin{equation}
\label{eq:dual2}
\begin{cases}
\mtx{\Lambda}_{(a, a)}:= \mtx{D}_{(a, a)} + \epsilon \vct{\theta}_{(a)} \vct{\theta}_{(a)}^\top + \lambda \vct{d}_{(a)}\vct{d}_{(a)}^\top - \mtx{A}_{(a, a)} \succeq \mtx{0};
\\
\mtx{\Lambda}_{(a, a)}\vct{1}_{\ell_a}= \vct{0},
\end{cases}
\end{equation}
where $\vct{1}_{\ell}$ denote the $\ell$-dimensional vector whose coordinates all equal $1$. 
The equality condition gives
\begin{equation}
\label{def:D}
\mtx{D}_{(a, a)}=\diag\left(\mtx{A}_{(a, a)}\vct{1}_{\ell_a} - \lambda\left(\sum_{j \in C^\ast_a} d_j\right)\vct{d}_{(a)} - \epsilon\; G_a \vct{\theta}_{(a)}\right).
\end{equation}
The equality condition also implies that $\rank \left(\mtx{\Lambda}_{(a, a)}\right)\leq \ell_a - 1$. Therefore, in order to prove $\mtx{\Lambda}_{(a, a)} \succeq \mtx{0}$, it suffices to prove $ \lambda_{\ell_a - 1} \left(\mtx{\Lambda}_{(a, a)}\right)>0$. By Weyl's inequality~\citep{HJ2013}, we have
\begin{align*}
 & \lambda_{\ell_a - 1}\left(\mtx{\Lambda}_{(a,a)}\right) \\
 =&  \lambda_{\ell_a - 1}\left(\mtx{D}_{(a, a)} - \mtx{A}_{(a, a)} +  \lambda \vct{d}_{(a)} \vct{d}_{(a)}^\top + \epsilon \vct{\theta}_{(a)}\vct{\theta}_{(a)}^\top\right)
 \\
 \geq& \lambda_{\ell_a}\left(\mtx{D}_{(a, a)} - \mtx{A}_{(a, a)} + p \vct{\theta}_{(a)}\vct{\theta}_{(a)}^\top \right) + \lambda_{\ell_a - 1}\left( \lambda \vct{d}_{(a)} \vct{d}_{(a)}^\top + \epsilon \vct{\theta}_{(a)}\vct{\theta}_{(a)}^\top-p \vct{\theta}_{(a)}\vct{\theta}_{(a)}^\top\right)
 \\
 \geq& \lambda_{\ell_a}\left(\mtx{D}_{(a, a)} - \mtx{A}_{(a, a)} + p \vct{\theta}_{(a)}\vct{\theta}_{(a)}^\top \right).
 \end{align*}
The last inequality is due to the fact that $\lambda \vct{d}_{(a)} \vct{d}_{(a)}^\top + \epsilon \vct{\theta}_{(a)}\vct{\theta}_{(a)}^\top-p \vct{\theta}_{(a)}\vct{\theta}_{(a)}^\top$ has at most one negative eigenvalue. Therefore, to establish \eqref{eq:dual1}, we only need to prove
\[
\mtx{D}_{(a, a)} - \mtx{A}_{(a, a)} + p \vct{\theta}_{(a)}\vct{\theta}_{(a)}^\top \succ \mtx{0},
\]
or equivalently,
\begin{equation*}
\mtx{\Lambda}_{1}:=\diag(\vct{\theta}_{(a)})^{-\frac{1}{2}}
\left(\mtx{D}_{(a, a)} - \mtx{A}_{(a, a)} + p \vct{\theta}_{(a)}\vct{\theta}_{(a)}^\top \right)\diag(\vct{\theta}_{(a)})^{-\frac{1}{2}} \succ \mtx{0}.
\end{equation*}
Define the matrices
\[
\begin{cases}
\mtx{\Lambda}_{11}:=\diag(\vct{\theta}_{(a)})^{-\frac{1}{2}}\mtx{D}_{(a, a)}\diag(\vct{\theta}_{(a)})^{-\frac{1}{2}} = \mtx{D}_{(a, a)}\diag\left(\vct{\theta}_{(a)}\right)^{-1},
\\
\mtx{\Lambda}_{12}:= \diag(\vct{\theta}_{(a)})^{-\frac{1}{2}} \left(\mtx{A}_{(a, a)} - p \vct{\theta}_{(a)}\vct{\theta}_{(a)}^\top)\right) \diag(\vct{\theta}_{(a)})^{-\frac{1}{2}}.
\end{cases}
\]
Then $\mtx{\Lambda}_{1} = \mtx{\Lambda}_{11} + \mtx{\Lambda}_{12}$. By Weyl's inequality~\citep{HJ2013}, to prove $\mtx{\Lambda}_{1} \succ \mtx{0}$, we only need to show that
\begin{equation}
\label{eq:positivity}
\lambda_{\ell_a}(\mtx{\Lambda}_{11}) > \|\mtx{\Lambda}_{12}\|.
\end{equation}
Applying Lemma~\ref{teo:wigner_spec} in the appendix, we can prove that with probability at least $1 - \frac{1}{n^2}$,
\begin{equation}
\label{eq:norm_upper}
\left\| \mtx{\Lambda}_{12}\right\|\leq C_1\left(\sqrt{p \ell_a \log n} + \frac{\log n}{\theta_{\min}}\right) + p,
\end{equation}
for some numerical constant $C_1$.  Moreover,
\begin{equation}
\label{eq:diagonal_min}
\lambda_{\ell_a}(\mtx{\Lambda}_{11})  = \min_{i \in C^\ast_a} \left(\frac{1}{\theta_i}\left(\sum_{j \in C^\ast_a} a_{ij} \right) - \lambda \left(\sum_{j \in C^\ast_a} d_{j} \right) \frac{d_i}{\theta_i} - \epsilon G_a\right),
\end{equation}
where  $a_{ij}$ denotes the $(i,j)$-th entry of $\mtx{A}.$
By Chernoff's inequality, for each $i \in C^\ast_a$, with probability at least $1 - \frac{1}{n^3}$,
\begin{align*}
\sum_{j \in C^\ast_a} a_{ij} &\geq p\theta_i\left(\sum_{j \in C^\ast_a/\{i\}} \theta_j \right) - \sqrt{6 (\log n) \theta_i p \left(\sum_{j \in C^\ast_a/\{i\}} \theta_j \right)}
\\
&\geq p\theta_i\left(G_a -1 \right) - \sqrt{6 (\log n) \theta_i p G_a}.
\end{align*}
By the bound \eqref{eq:d_bound2} and the separation condition \eqref{eq:sep_perf}, with probability at least $1 - \frac{2}{n^2}$ there holds the bound
\begin{align*}
\lambda \left(\sum_{j \in C^\ast_a} d_j\right) \frac{d_i}{\theta_i} &\leq \frac{p - \delta}{H_a^2 \theta_i} \left(1 + \frac{\delta}{5p}\right)^2\left(\sum_{j \in C^\ast_a}f_j\right)f_i
\\
& = \frac{p - \delta}{H_a^2 \theta_i} \left(1 + \frac{\delta}{5p}\right)^2\left(\sum_{j \in C^\ast_a}\theta_j H_a \right)\theta_i H_a
\\
& = (p - \delta) \left(1 + \frac{\delta}{5p}\right)^2 G_a \leq \left(p - \frac{3}{5}\delta \right) G_a.
\end{align*}
Then by the fact $\epsilon = \frac{\delta}{10}$, the expression~\eqref{eq:diagonal_min} and the union bound, we conclude that with probability at least $1-\frac{3}{n^2}$,
\[
\lambda_{\ell_a}(\mtx{\Lambda}_{11}) \geq \frac{1}{2} \delta G_a - p - \sqrt{\frac{6(\log n)pG_a}{\theta_{\min}}}.
\]
Therefore, to guarantee \eqref{eq:positivity}, it suffices to let
\[
\frac{1}{2} \delta G_a - p - \sqrt{\frac{6(\log n)pG_a}{\theta_{\min}}} > C_1\left(\sqrt{p \ell_i \log n} + \frac{\log n}{\theta_{\min}}\right) + p.
\]
Since $\ell_i \le G_i/\theta_{\min}$ and $p\leq 1$, the above inequality is guaranteed by the assumption  \eqref{eq:assump} and its implication \eqref{eq:lower_sim} with sufficiently large $C_0$. Thus for each $1 \le a \le r$,  \eqref{eq:dual2} holds with probability at least $1 - \frac{4}{n^2}$. By the union bound,  \eqref{eq:dual2} holds for all $1 \le a \le r$ with probability at least $1-\frac{4}{n}$. 

\paragraph{Lower bound of $S_2\,$}~\\
For any $(i, j) \in C^\ast_a \times C^\ast_a$ for some $a=1, \ldots, r$, $Y^*_{ij}=1 \geq Y_{ij}$. This implies that the matrix $\mtx{Y}_w^* - \mtx{Y}_w$ is entrywise nonnegative, and thus
\[
S_2 = \langle \epsilon \left(\vct{\theta}\vct{\theta}^\top\right)_w, \Ystar - \mtx{Y} \rangle = \frac{\delta}{10} \|\mtx{Y}_w^* - \mtx{Y}_w\|_{1, \vct{\theta}}
\label{eq:lowerboundS2}.
\]

\paragraph{Lower bound of $S_3\,$}~\\
For any $(i, j) \in C^\ast_a \times C^\ast_b$ with $a \neq b$, by the bound~\eqref{eq:d_bound2}, with probability at least $1 - \frac{2}{n^2}$ there holds
\[
d_i d_j \geq \left(1-\frac{\delta}{5p}\right)^2 f_i f_j = \left(1-\frac{\delta}{5p}\right)^2 \theta_i \theta_j H_aH_b.
\]
The separation condition \eqref{eq:sep_perf} implies that $\lambda > \frac{q+\delta}{H_aH_b}$, so with probability at least $1 - \frac{2}{n^2}$,
\[
\lambda d_i d_j > (q+\delta)\left(1-\frac{\delta}{5p}\right)^2 \theta_i \theta_j.
\]
The inequalities
\[
(q+\delta)\left(1-\frac{\delta}{5p}\right)^2 > q + \delta - \frac{2q\delta}{5p} - \frac{2\delta^2}{5p}>q+\delta - \frac{4\delta}{5}=q+\frac{1}{5}\delta
\]
imply that with probability at least $1 - \frac{2}{n^2}$,
\[
\lambda d_i d_j > \left(q + \frac{1}{5}\delta  \right) \theta_i \theta_j.
\]
Moreover, we know for any $(i, j) \in C^\ast_a \times C^\ast_b$ with $a \neq b$, $Y^*_{ij}=0$. Therefore, with probability at least $1 - \frac{2}{n^2}$,
\begin{align}
S_3 &= \langle \E \mtx{A}_b - \lambda \left(\vct{d}\vct{d}^\top\right)_b, \Ystar - \mtx{Y} \rangle =\Iprod{ q(\vct{\theta}\vct{\theta}^\top)_b  - \lambda (\vct{d}\vct{d}^\top)_b}{-\mtx{Y}_b}\geq \frac{\delta}{5} \|\mtx{Y}_b\|_{1, \vct{\theta}}.
 \label{eq:boundmean}
\end{align}

\paragraph{Lower bound of $S_4\,$}~\\
Define matrix $\mtx{W} = ( \mtx{A}_b -\expect{\mtx{A}_b}) \circ \mtx{\Theta}^{-\frac{1}{2}}$ and then
\begin{align}
     \langle \mtx{A}_b -\expect{\mtx{A}_b}, \mtx{Y}^\ast- \mtx{Y} \rangle = \langle \mtx{W}, (\mtx{Y}^\ast-  \mtx{Y }) \circ \mtx{\Theta}^{\frac{1}{2}} \rangle.
 \end{align}

Let $ \mtx{U} \in \reals^{n \times r}$ be the weighted characteristic matrix for the clusters, \ie,
$$\mtx{U}_{ia} = \begin{cases}
\frac{\sqrt{\theta_i} }{ \sqrt{ \|\theta^{(a)}\|_1 }} & \text{if node $i \in C^\ast_a$} \\
0 & \text{otherwise,}
\end{cases}$$
Let $ \mtx{\Sigma} \in \reals^{r \times r }$ be the diagonal matrix with ${\Sigma}_{aa}=  \|\theta^{(a)}\|_1$ for $a \in [r]$.
The weighted true cluster matrix $ \Ystar \circ \mtx{\Theta}^{\frac{1}{2}} $ has a rank-$ r $ singular value
decomposition given by $\mtx{Y}^{\ast} \circ \mtx{\Theta}^{\frac{1}{2}}= \mtx{U} \mtx{\Sigma} \mtx{U}^{\top}$.
Define the projections $\mathcal{P}_T( \mtx{M} ) =
\mtx{U} \mtx{U}^\top \mtx{M}+\mtx{M}\mtx{U}\mtx{U}^\top-\mtx{U}\mtx{U}^\top \mtx{M}\mtx{U}\mtx{U}^\top $ and $\mathcal{P}_{T^\perp}(\mtx{M}) = \mtx{M} - \mathcal{P}_T(\mtx{M})$.
Let $\Tr(\mtx{M})$ denote the trace of $\mtx{M}$.

It follows from \prettyref{lmm:matrixconcentration} in the Appendix that with probability at least $1 - \frac{1}{n^2}$,
\begin{align*}
\| \mtx{W} \| \leq c_2 \sqrt{  qn   } + c_2 \frac{ \sqrt{ \log n} }{\theta_{\min}}.
\end{align*}

Notice that $ \mtx{U} \mtx{U}^\top+ \mathcal{P}_{T^\perp}\left(\frac{ \mtx{W} }{\| \mtx{W} \|} \right)$ is a subgradient of $\| \mtx{X} \|_*$ at
$\mtx{X} = \mtx{Y}^\ast \circ \mtx{\Theta}^{\frac{1}{2}}$.
Hence, we have
 \begin{align*}
   0 &\ge \Tr \left( \mtx{Y} \circ \mtx{\Theta}^{\frac{1}{2}} \right)-\Tr \left( \mtx{Y}^\ast \circ \mtx{\Theta}^{\frac{1}{2}} \right) \\
   &\geq  \langle \mtx{U}\mtx{U}^\top+ \mathcal{P}_{T^\perp}\left( \mtx{W} / \|\mtx{W}\| \right), (\mtx{Y}- \mtx{Y}^\ast) \circ \mtx{\Theta}^{\frac{1}{2}} \rangle,
   \end{align*}
   and as a consequence,
   \begin{align*}
   \langle  \mathcal{P}_{T^\perp}\left( \mtx{W} \right), (\mtx{Y}^\ast- \mtx{Y}) \circ \mtx{\Theta}^{\frac{1}{2}} \rangle \ge \|\mtx{W} \| \Iprod{\mtx{U} \mtx{U}^\top}{(\mtx{Y}- \mtx{Y}^\ast) \circ \mtx{\Theta}^{\frac{1}{2}} }
   \end{align*}
   Therefore, we get that
   \begin{align}
  &  \langle \mtx{W}, (\mtx{Y}^\ast-  \mtx{Y }) \circ \mtx{\Theta}^{\frac{1}{2}} \rangle \nonumber \\
    =&  \langle  \mathcal{P}_{T^\perp} (\mtx{W} ), (\mtx{Y}^\ast-  \mtx{Y }) \circ \mtx{\Theta}^{\frac{1}{2}} \rangle + \langle  \mathcal{P}_{T} (\mtx{W} ), (\mtx{Y}^\ast-  \mtx{Y }) \circ \mtx{\Theta}^{\frac{1}{2}} \rangle \nonumber \\
    =&  \langle  \mathcal{P}_{T^\perp} (\mtx{W} ), (\mtx{Y}^\ast-  \mtx{Y }) \circ \mtx{\Theta}^{\frac{1}{2}} \rangle - \langle  \mathcal{P}_{T} (\mtx{W} ), \mtx{Y }_b \circ \mtx{\Theta}^{\frac{1}{2}} \rangle \nonumber \\
    \ge& \|\mtx{W} \|   \Iprod{\mtx{U} \mtx{U}^\top}{(\mtx{Y}- \mtx{Y}^\ast) \circ \mtx{\Theta}^{\frac{1}{2}} }
    - \langle \mathcal{P}_{T}( \mtx{W} ) , \mtx{Y}_b \circ \mtx{\Theta}^{\frac{1}{2}} \rangle  \nonumber \\
   \geq&  -  \| \mtx W\| \| \mtx{U} \mtx{U}^\top\|_{\infty, \mtx{ \Theta}^{-\frac{1}{2}}}   \| \mtx{Y}^\ast - \mtx{Y} \|_{1, \vct{\theta}}  - \|\mathcal{P}_{T}( \mtx{ W} )\|_{\infty,  \mtx{\Theta}^{-\frac{1}{2}}} \| \mtx Y_b \|_{1, \vct{\theta}} ,
   \label{eq:DeltaYLowerBound}
  \end{align}
  where the second equality holds because $\mathcal{P}_{T} (\mtx{W} ) =0$ on the diagonal blocks $C_k \times C_k$ for $1 \le k \le r$; the last inequality follows because for any matrix $\mtx{M} \in \reals^{n \times n}$, $\| \mtx{M} \|_{\infty,  \mtx{\Theta}^{-\frac{1}{2}}} := \| \mtx{M} \circ \mtx{\Theta}^{-1/2} \|_{\infty}$. 
  By the definition of $\mtx U$, $\| \mtx{U} \mtx{U}^\top\|_{\infty, \mtx{\Theta}^{-\frac{1}{2}}}= 1/ G_{\min}$.  It follows from
  the theorem assumption \prettyref{eq:assump} that
$
  \delta > \frac{10 \| \mtx{W} \| }{G_{\min} }.
$
Therefore, we have
  \begin{align}
  \| \mtx W\| \| \mtx{U} \mtx{U}^\top\|_{\infty,  \mtx{\Theta}^{-\frac{1}{2}}}    =   \frac{\| \mtx{W} \| }{G_{\min} }     < \frac{\delta }{10} . \label{eq:S4first}
  \end{align}

   Below we bound the term $\|\mathcal{P}_{T}(\mtx{W} )\|_{\infty, \Theta^{-\frac{1}{2}}}$. From the definition of $\mathcal{P}_T$,
  \begin{align*}
  & \|\mathcal{P}_T(\mtx W)\|_{\infty, \Theta^{-\frac{1}{2}}} \\
  \leq&  \| \mtx{U} \mtx{U} ^\top \mtx{W} \|_{\infty, \mtx{\Theta}^{-\frac{1}{2}}}+\|\mtx{W} \mtx{U} \mtx{U}^\top \|_{\infty, \mtx{\Theta}^{-\frac{1}{2}}}
    + \|\mtx{U} \mtx{U}^\top \mtx{W} \mtx{U} \mtx{U} ^\top \|_{\infty, \mtx{\Theta}^{-\frac{1}{2}}} . 
  \end{align*}
  We bound $\| \mtx{U} \mtx{U} ^\top \mtx{W} \|_{\infty, \mtx{\Theta}^{-\frac{1}{2}}}$ below. Notice that $( \mtx{U} \mtx{U} ^\top \mtx{W} )_{ij} =0$ if $i$ and $j$ are from the same cluster.
  Thus, to bound the term $( \mtx{U} \mtx{U}^\top \mtx{W} )_{ij}$, it suffices to consider the case where
   $i$ belongs to cluster $k$ and $j$ belongs to a different cluster $k'$ for $k' \neq k$, Recall $C^\ast_k$ is the set of users in cluster $k$.
  Then
  \begin{align*}
    (\mtx{U} \mtx{U}^\top \mtx{W} )_{ij}  = \frac{\sqrt{ \vct{\theta}_i}}{\| \vct{\theta}^{(a)}\|_1 }  \sum_{i' \in C^\ast_k } \sqrt{ {\theta}_{i'}} {W}_{i'j},
  \end{align*}
  which is the weighted average of independent random variables.
   By Bernstein's inequality, with probability at least $1-n^{-3}$,
  \begin{align*}
    \left|\sum_{i'\in C^\ast_k} \sqrt{ {\theta}_{i'}} {W} _{i'j} \right|\leq  \sqrt{6 q \| \vct{\theta}^{(a)}\|_1 \log n } + \frac{2 \log n}{\sqrt{ {\theta}_{j} }}. 
  \end{align*}
  Then  with probability at least $1-n^{-1}$,
  \begin{align*}
  \| \mtx{U} \mtx{U}^\top \mtx W\|_{\infty,  \mtx{\Theta}^{-\frac{1}{2}}}
   \leq c_1  \left(  \sqrt{ \frac{ q \log n }{ G_{\min} \theta_{\min}} } + \frac{\log n}{G_{\min} \theta_{\min} } \right).
   \end{align*}
    Similarly we bound $\| \mtx{W} \mtx{U} \mtx{U}^\top \|_{\infty, \mtx{\Theta}^{-\frac{1}{2}}}$ and $\| \mtx{U} \mtx{U}^\top  \mtx{W} \mtx{U} \mtx{U}^\top \|_{\infty, \mtx{\Theta}^{-\frac{1}{2}}}$.
    Therefore, with probability at least $1-3n^{-1}$,
  \begin{align}
    \|\mathcal{P}_T(\mtx{W})\|_{\infty, \mtx{\Theta}^{-\frac{1}{2}}} \le 3 c_1  \left(  \sqrt{ \frac{ q \log n }{ G_{\min} \theta_{\min}} } + \frac{\log n}{G_{\min} \theta_{\min} } \right)
    < \delta/10,
     \label{eq:PTW}
  \end{align}
  where the last inequality follows from the theorem assumption \prettyref{eq:assump}.
Substituting  the bounds~\prettyref{eq:S4first} and \prettyref{eq:PTW} into the inequality~\prettyref{eq:DeltaYLowerBound}, we get that with  probability at least $1-4n^{-1}$,
\begin{align*}
S_4 > - \frac{\delta}{10} \left(  \| \mtx{Y}^\ast - \mtx{Y} \|_{1, \vct{\theta} }  +  \|\mtx{Y}_b \|_{1, \vct{\theta} }   \right).
\end{align*}

\paragraph{Putting together}~\\
Combining the bounds for $ S_i$ with $i=1,2,3,4 $, we conclude that with  probability at least $1-10n^{-1}$,
\begin{align*}
  & S_1+S_2+S_3+S_4 \\
>&   \frac{\delta}{10} \|\mtx{Y}_w^* - \mtx{Y}_w\|_{1, \vct{\theta}}+ \frac{\delta}{5}  \| \mtx{Y}_b  \|_{1, \vct{\theta}}  - \frac{\delta}{10} \left(  \| \mtx{Y}^\ast - \mtx{Y} \|_{1, \vct{\theta} }  +  \|\mtx{Y}_b \|_{1, \vct{\theta} }   \right) \\
\geq& 0,
\end{align*}
thereby proving that $\Delta(Y)>0$ for any feasible $\mtx{Y} \neq \mtx{Y}^\ast$.

\end{proof}

\subsubsection{Proof of \prettyref{lmm:degrees}}
\label{sec:proof_lmm_degrees}

\begin{proof}
The equation \eqref{eq:d_bound1} can be obtained straightforwardly by Chernoff's inequality. To prove \eqref{eq:d_bound2}, we only need to establish for all $i=1, \ldots, n$,
\begin{equation}
\label{eq:delta_lower_1}
\delta \geq 5p \max \left(\sqrt{\frac{12 \log n}{f_i}}, \frac{4 \log n + 1}{f_i}\right).
\end{equation}
For any $i \in C^\ast_a$, since
\[
f_i = \theta_i H_a \geq  p \theta_i  G_{\min} \geq p \theta_{\min}  G_{\min},
\]
Therefore,  the assumption~\eqref{eq:assump} implies that
\[
\delta > C_0 \sqrt{\frac{p \log n}{G_{\min}\theta_{\min}}} = C_0p \sqrt{\frac{\log n}{pG_{\min}\theta_{\min}}} \geq C_0p \sqrt{\frac{\log n}{f_i}}.
\]
Therefore, as long as $C_0$ is large enough, we have
\begin{equation}
\label{eq:delta_lower_2}
\delta \geq 5p \sqrt{\frac{12 \log n}{f_i}}.
\end{equation}
Since $\delta < p$, for sufficiently large $C_0$, we have that $\sqrt{\frac{12 \log n}{f_i}} \leq \frac{1}{5}$, and this implies
\[
\frac{4 \log n + 1}{f_i} \leq \frac{12 \log n}{f_i} \leq \sqrt{\frac{12 \log n}{f_i}} .
\]
The bound \eqref{eq:delta_lower_1} can then be deduced from~\eqref{eq:delta_lower_2}.
\end{proof}


\section*{Acknowledgment}

Y.\ Chen was supported by the School of Operations Research and Information Engineering at Cornell University. 
X.\ Li was supported by a startup fund from the Statistics Department at University of California, Davis.
J.\ Xu was supported by the National Science Foundation under
Grant CCF 14-09106, IIS-1447879, OIS 13-39388, and CCF 14-23088, and
Strategic Research
Initiative on Big-Data Analytics of the College of Engineering
at the University of Illinois, DOD ONR Grant N00014-14-1-0823, and Simons Foundation Grant 328025.

\appendixpage
\appendix
\section{Supporting lemmas}
In this section we state several additional technical lemmas concerning random matrices. These lemmas are used in the proof of our main theorems.

Recall that  $\circ$ denotes the element-wise product between matrices.

\begin{lemma}\label{lmm:matrixconcentration}
Let $\mtx{W} = ( \mtx{A}_b -\expect{\mtx{A}_b}) \circ \mtx{\Theta}^{-\frac{1}{2}}$.
 For any $c>0$,
there exists $c'>0$, such that with probability at least $1-n^{-c}$,
\begin{align}
\|  \mtx{W}  \| \le c' \left( \sqrt{n q} + \frac{ \sqrt{ \log n} }{ \theta_{\min} } \right).
\end{align}
\end{lemma}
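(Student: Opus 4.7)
The plan is to view $\mtx{W}$ as a symmetric random matrix with centered entries that are independent above the diagonal, and then invoke a sharp concentration bound for its operator norm. First, I would record the two relevant scalar parameters. For $(i,j)$ with $i\in C^*_a$ and $j\in C^*_b$, $a\neq b$, the entry $A_{ij}$ is Bernoulli with mean $\theta_i\theta_j q$, so
\[
\Var(W_{ij}) = \frac{\Var(A_{ij})}{\theta_i\theta_j} \le q, \qquad |W_{ij}|\le \frac{1}{\sqrt{\theta_i\theta_j}}\le \frac{1}{\theta_{\min}} \;\;\text{a.s.},
\]
while $W_{ij}=0$ whenever $i,j$ lie in the same cluster. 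Consequently
\[
\sigma_\ast^2 := \max_{i}\sum_{j}\Var(W_{ij}) \le nq, \qquad K := \max_{ij}\|W_{ij}\|_{\infty} \le \frac{1}{\theta_{\min}}.
\]

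Second, I would apply the Bandeira--van Handel bound (equivalently, a sharp non-commutative Khintchine inequality) for symmetric matrices with independent, bounded, centered entries, which gives
\[
\E\|\mtx{W}\| \;\lesssim\; \sigma_\ast + K\sqrt{\log n} \;\lesssim\; \sqrt{nq} + \frac{\sqrt{\log n}}{\theta_{\min}}.
\]
This already supplies the desired scaling in expectation. To turn it into a high-probability statement, the functional $\mtx{W}\mapsto \|\mtx{W}\|$ is convex and $2$-Lipschitz with respect to the independent above-diagonal entries (each supported in an interval of length $2K$), so Talagrand's concentration inequality for convex Lipschitz functions of bounded independent variables yields
\[
\Prob\!\left(\,\bigl|\,\|\mtx{W}\|-\E\|\mtx{W}\|\,\bigr| > t\,\right) \le 2\exp(-c\, t^2/K^2).
\]
Choosing $t\asymp K\sqrt{(c+1)\log n} = \sqrt{(c+1)\log n}/\theta_{\min}$ makes the failure probability at most $n^{-c}$ and the deviation is absorbed into the existing $\sqrt{\log n}/\theta_{\min}$ term, completing the argument.

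\textbf{Main obstacle.} The delicate point is the $\sqrt{\log n}$ (rather than $\log n$) factor attached to $1/\theta_{\min}$. A routine $\varepsilon$-net argument combined with scalar Bernstein gives only $\|\mtx{W}\|\lesssim \sqrt{nq\log n} + \log n/\theta_{\min}$, which is off by a full $\sqrt{\log n}$ in both terms. Getting the sharper scaling requires either the Bandeira--van Handel refinement cited above, or, as an alternative, a two-scale truncation $W_{ij} = W_{ij}\mathbf{1}\{|W_{ij}|\le K_0\} + W_{ij}\mathbf{1}\{|W_{ij}|> K_0\}$ with $K_0$ tuned to $\sqrt{q}/\theta_{\min}$-scale, handling the tame piece by matrix Bernstein (with the improved variance parameter $\sigma_\ast^2\le nq$) and the wild piece by noting it has $o(1)$ nonzero entries with high probability so that its operator norm is controlled by its Frobenius norm. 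Either route converts the two structural estimates on $\sigma_\ast$ and $K$ into the claimed bound.
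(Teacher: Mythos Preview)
Your proposal is correct and follows essentially the same route as the paper: both identify the variance parameter $\sigma_\ast^2 \le nq$ and the uniform bound $K \le 1/\theta_{\min}$, invoke the Bandeira--van Handel bound to control $\E\|\mtx{W}\|$, and then upgrade to high probability via Talagrand's inequality for convex Lipschitz functions. The only cosmetic difference is that the paper first symmetrizes $\mtx{W}$ by a Rademacher Hadamard multiplier before applying \cite[Corollary~3.6]{BVH14} (whose hypotheses ask for symmetric entry distributions), whereas you apply the general bounded-entry version directly; this does not change the substance of the argument.
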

\begin{proof}
Let $\mtx{W}'$ denote an independent copy of $\mtx{W}$. Let $\mtx{M}=(\mtx{M}_{ij})$ denote an $n\times n$ zero-diagonal symmetric matrix whose entries are Rademacher and independent
from $\mtx{W}$ and $\mtx{W}'$. We apply the usual symmetrization arguments:
\begin{align}
\Expect[\| \mtx{W}  \|]
= & ~ \Expect[\|\mtx{W}  - \Expect[\mtx{W} ' ] \|] \nonumber \\
\overset{(a)}{\leq} &~ \Expect[\| \mtx{W}-\mtx{W}' \|]  \nonumber \\
\overset{(b)}{=} &~  \Expect[\| ( \mtx{W}-\mtx{W}' ) \circ M \| ] \nonumber \\
\overset{(c)}{\le} &~ 2\Expect[ \| \mtx{W}  \circ \mtx{M} \|],
\nonumber 
\end{align}
where $(a)$ follow from  the Jensen's inequality, $(b)$ follows because $\mtx{W}-\mtx{W}'$ has the same distribution as $(\mtx{W}-\mtx{W}')\circ \mtx{M} $, and $(c)$ follow from the triangle inequality.

Next we upper bound $\Expect[ \| \mtx{W}  \circ \mtx{M}\|].$ Notice that $\mtx{W}  \circ \mtx{M}$ is a $n \times n$ symmetric and zero-diagonal
random matrix, where the entries $\{ ( \mtx{W}  \circ \mtx{M} )_{ij}, i<j \}$ are independent. Let $b_{ij} = \sqrt{q (1-q \theta_i \theta_j ) } $ if $i$ and $j$ in two different clusters; otherwise $b_{ij}=0$.
Let $\{ \xi_{ij}, i \le j \}$ denote independent random variables with
\begin{align*}
\xi_{ij} = \left \{
    \begin{array}{rl}
    \frac{1-q \theta_{i} \theta_{j} }{\sqrt{q (1-q \theta_i \theta_j ) \theta_i \theta_j} }  & \text{w.p. } \frac{1}{2} q \theta_i \theta_j  \\
    -   \frac{1-q \theta_{i} \theta_{j} }{\sqrt{ q (1-q \theta_i \theta_j ) \theta_i \theta_j} }  & \text{w.p. }  \frac{1}{2} q \theta_i \theta_j \\
    \frac{q \sqrt{ \theta_i \theta_j } }{\sqrt{q (1-q \theta_i \theta_j )} } & \text{w.p. }  \frac{1}{2} (1- q \theta_i \theta_j ) \\
     -\frac{q \sqrt{ \theta_i \theta_j } }{\sqrt{q (1-q \theta_i \theta_j )} } & \text{w.p. }  \frac{1}{2} (1- q \theta_i \theta_j ).
    \end{array} \right.
    \end{align*}
    Notice that $\xi_{ij}$ has a symmetric distribution with zero mean and unit variance.  Let $\mtx{X}$ denote the random matrix with $\mtx{X}_{ij} = \mtx{X}_{ij} =\xi_{ij} b_{ij}.$
    Then one can check that $\mtx{W}  \circ \mtx{M}$ has the same distribution as $\mtx{X}$. Notice that $\|\mtx{X} \|_{\infty} \le 1/\theta_{\min}$. It follows from \cite[Corollary 3.6]{BVH14}
    that there exists some absolute constant $c_1>0$ such that 
    \begin{align*}
    \expect{\mtx{W}  \circ \mtx{M} } = \expect{\mtx{X} } \le c_1 \left( \sqrt{nq} + \frac{\sqrt{ \log n} }{\theta_{\min} } \right).
    \end{align*}
Since the entries of $\| \mtx{W} \|_{\infty} \le 1/\theta_{\min} $, Talagrand's concentration inequality for 1-Lipschitz convex functions (see, \eg, \cite[Theorem 2.1.13]{tao.rmt}) yields the bound
\[
\prob{\| \mtx{W} ]\| \geq \Expect[\| \mtx{W}  \|]+t/ \theta_{\min} } \leq c_2 \exp(-c_3 t^2)
\]
for some absolute constants $c_2,c_3$, which implies that for any $c>0$,
there exists $c'>0$, such that
\begin{align*}
\prob{\| \mtx{W} \| \geq  c' \left( \sqrt{nq} + \frac{\sqrt{ \log n} }{\theta_{\min} } \right)  } \leq n^{-c}.
\end{align*}

\end{proof}


\begin{lemma}
[Theorem 6.1 in \cite{Tropp2011}]
\label{teo:tropp}
Consider a finite sequence $\{\mtx{X}_k\}$ of independent, random, self-adjoint matrices with dimension $d$. Assume that
\[
\E \mtx{X}_k = \mtx{0} \quad \text{and} \quad \|\mtx{X}_k\|\leq R.
\]
If the norm of the total variance satisfies
\[
\left\| \sum_{k}\E (\mtx{X}_k^2)\right\|\leq M^2,
\]
then, the following inequality holds for all $t\geq 0$
\[
\P\left\{\left\|\sum_{k}\mtx{X}_k\right\|\geq t\right\}\leq 2d \exp\left(\frac{-t^2/2}{M^2+Rt/3}\right).
\]
\end{lemma}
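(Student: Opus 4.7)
The statement above is (a version of) the matrix Bernstein inequality for sums of independent self-adjoint random matrices, so the plan is to follow the standard \emph{matrix Laplace transform} route. First, for any $\theta>0$, Markov's inequality applied to the exponential of $\lambda_{\max}$ gives
\[
\Prob\Big\{\lambda_{\max}\big(\textstyle\sum_k\mtx{X}_k\big)\geq t\Big\}\;\le\;e^{-\theta t}\,\Expect\Tr\exp\big(\theta\textstyle\sum_k\mtx{X}_k\big),
\]
since for a self-adjoint matrix $\mtx{Y}$ one has $e^{\theta\lambda_{\max}(\mtx{Y})}=\lambda_{\max}(e^{\theta\mtx{Y}})\le\Tr\,e^{\theta\mtx{Y}}$. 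Everything then reduces to bounding the expected trace-exponential of the sum; the factor $2d$ in the final bound will come from symmetrizing via $\|\mtx{Y}\|=\max(\lambda_{\max}(\mtx{Y}),\lambda_{\max}(-\mtx{Y}))$ and a union bound.

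To decouple the summands I would invoke Lieb's concavity theorem for the map $\mtx{A}\mapsto\Tr\exp(\mtx{H}+\log\mtx{A})$; combined with Jensen's inequality and iterated conditioning this produces the master estimate
\[
\Expect\Tr\exp\big(\theta\textstyle\sum_k\mtx{X}_k\big)\;\le\;\Tr\exp\Big(\textstyle\sum_k\log\Expect e^{\theta\mtx{X}_k}\Big).
\]
Next I would bound each matrix cumulant $\log\Expect e^{\theta\mtx{X}_k}$ in the semidefinite order. Starting from the scalar inequality $e^{\theta x}\le 1+\theta x+\frac{(\theta x)^{2}/2}{1-\theta R/3}$, valid for $|x|\le R$ and $0<\theta<3/R$, promoting it via spectral calculus to a semidefinite inequality, and taking expectations using $\Expect\mtx{X}_k=\mtx{0}$ together with the operator-monotone bound $\log(\mtx{I}+\mtx{U})\preceq\mtx{U}$ yields
\[
\log\Expect e^{\theta\mtx{X}_k}\;\preceq\;g(\theta)\,\Expect\mtx{X}_k^{2},\qquad g(\theta)=\frac{\theta^{2}/2}{1-\theta R/3}.
\]

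Summing the cumulant bounds in the semidefinite order, using monotonicity of $\Tr\exp$ on the positive semidefinite cone, the hypothesis $\|\sum_k\Expect\mtx{X}_k^{2}\|\le M^{2}$, and the crude dimensional bound $\Tr e^{\mtx{Z}}\le d\cdot e^{\lambda_{\max}(\mtx{Z})}$, I obtain
\[
\Expect\Tr\exp\big(\theta\textstyle\sum_k\mtx{X}_k\big)\;\le\;d\cdot\exp\big(g(\theta)\,M^{2}\big),
\]
so that $\Prob\{\lambda_{\max}(\sum_k\mtx{X}_k)\ge t\}\le d\exp\!\big(g(\theta)M^{2}-\theta t\big)$. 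Optimizing over $\theta\in(0,3/R)$ by taking $\theta=t/(M^{2}+Rt/3)$ produces the desired exponent $-\tfrac{t^{2}/2}{M^{2}+Rt/3}$. Applying the same chain of inequalities to $\{-\mtx{X}_k\}$ (which satisfies identical hypotheses with the same $R$ and $M$) and union-bounding the two tails gives the operator-norm bound with the prefactor $2d$ as stated.

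The main obstacle is Lieb's concavity theorem: taking it as a black box, everything else is a short scalar inequality followed by an elementary optimization. The earlier Ahlswede--Winter strategy, which replaces Lieb by Golden--Thompson ($\Tr e^{\mtx{A}+\mtx{B}}\le\Tr(e^{\mtx{A}}e^{\mtx{B}})$), is a natural fallback but yields a cruder variance proxy $\sum_k\|\Expect\mtx{X}_k^{2}\|$ rather than $\|\sum_k\Expect\mtx{X}_k^{2}\|$, so Lieb (or an equivalent Stein-pair technique) really is the essential ingredient for the sharp form of the lemma stated here.
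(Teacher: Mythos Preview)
Your outline is correct and is essentially Tropp's own proof of his Theorem~6.1: the matrix Laplace transform method combined with Lieb's concavity theorem to obtain the subadditivity of matrix cumulants, followed by a Bernstein-type scalar MGF bound and optimization over~$\theta$. There is nothing to compare against here, because the paper does not prove this lemma at all---it is simply quoted from \cite{Tropp2011} as a black box and then applied in the proof of Lemma~\ref{teo:wigner_spec}. So you have supplied a (correct) proof where the paper provides none.
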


\begin{lemma}
\label{teo:wigner_spec}
Let $\mtx{A}=(A_{ij})_{1\leq i, j\leq n}$ be a symmetric random matrix whose diagonal entries are all zeros. Moreover, suppose $A_{ij}$, $1\leq i<j\leq n$ are independent zero-mean random variables satisfying $|A_{ij}|\leq R$ and $\Var(A_{ij})\leq \sigma^2$. Then, with probability at least $1-\frac{2}{n^4}$, we have
\[
\|\mtx{A}\|\leq C_0 \left(\sigma \sqrt{n \log n}+ R\log n \right)
\]
for some numerical constant $C_0$.
\end{lemma}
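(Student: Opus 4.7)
The plan is to deduce this lemma as a direct application of the matrix Bernstein inequality stated in Lemma~\ref{teo:tropp}, by decomposing $\mtx{A}$ into a sum of independent self-adjoint rank-two perturbations, one per edge.

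First, I would write
\[
\mtx{A} \;=\; \sum_{1\le i<j\le n} \mtx{X}_{ij}, \qquad \mtx{X}_{ij} \;:=\; A_{ij}\bigl(\mtx{e}_i \mtx{e}_j^\top + \mtx{e}_j \mtx{e}_i^\top\bigr),
\]
where $\{\mtx{e}_k\}$ is the standard basis of $\reals^n$. Each $\mtx{X}_{ij}$ is symmetric, they are mutually independent (since the $A_{ij}$'s are), and $\E \mtx{X}_{ij} = \mtx{0}$ because $\E A_{ij}=0$. Moreover, since the rank-two matrix $\mtx{e}_i \mtx{e}_j^\top + \mtx{e}_j \mtx{e}_i^\top$ has operator norm~$1$, we get $\|\mtx{X}_{ij}\| = |A_{ij}| \le R$. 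This supplies the value $R$ to plug into Lemma~\ref{teo:tropp}.

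Next, I would compute the variance proxy. A direct calculation gives $\mtx{X}_{ij}^2 = A_{ij}^2 (\mtx{e}_i \mtx{e}_i^\top + \mtx{e}_j \mtx{e}_j^\top)$, so that
\[
\sum_{i<j} \E \mtx{X}_{ij}^2 \;=\; \sum_{i<j} \E[A_{ij}^2]\bigl(\mtx{e}_i \mtx{e}_i^\top + \mtx{e}_j \mtx{e}_j^\top\bigr)
\]
is a diagonal matrix whose $k$-th diagonal entry equals $\sum_{j\ne k} \E[A_{kj}^2] \le (n-1)\sigma^2$. Hence
\[
\Bigl\| \sum_{i<j} \E \mtx{X}_{ij}^2 \Bigr\| \;\le\; n\sigma^2 \;=:\; M^2.
\]

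Finally, I would apply Lemma~\ref{teo:tropp} with $d=n$, the above $R$ and $M^2$, and choose $t = C_0(\sigma\sqrt{n\log n} + R\log n)$ for a sufficiently large absolute constant $C_0$. Under this choice, the exponent satisfies $\frac{t^2/2}{M^2 + Rt/3} \gtrsim \log n$, since each of the two branches $\sigma\sqrt{n\log n}$ and $R\log n$ in $t$ dominates the corresponding term in $M^2 + Rt/3$ up to a $\log n$ factor. Tuning $C_0$ to make this ratio at least $5\log n$ turns the bound $2n\exp(-\tfrac{t^2/2}{M^2 + Rt/3})$ into $\frac{2}{n^4}$, yielding the claim. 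No step is genuinely difficult; the only mild care needed is the bookkeeping on constants to ensure the two regimes (Gaussian-tail and Bernstein-tail) each contribute at most $1/n^5$ before multiplication by the dimension factor $2n$.
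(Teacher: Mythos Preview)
Your proposal is correct and follows essentially the same route as the paper: the same edge-wise decomposition $\mtx{A}=\sum_{i<j}\mtx{X}_{ij}$ with $\mtx{X}_{ij}=A_{ij}(\mtx{e}_i\mtx{e}_j^\top+\mtx{e}_j\mtx{e}_i^\top)$, the same variance bound $\sum_{i<j}\E\mtx{X}_{ij}^2\preceq (n-1)\sigma^2\mtx{I}_n$, and the same invocation of Lemma~\ref{teo:tropp}. Your write-up actually spells out the choice of $t$ and the two-regime exponent check more explicitly than the paper does, but there is no substantive difference in the argument.
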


\begin{proof}
For each pair $(i, j): 1\leq i<j\leq n$, let $\mtx{X}_{ij}$ be the matrix whose $(i, j)$ and $(j, i)$ entries are both $A_{ij}$, whereas other entires are zeros. Then we have
\[
\mtx{A}=\sum_{1\leq i<j \leq n} \mtx{X}_{ij}.
\]
Moreover, we can easily show that $\E \mtx{X}_{ij}=\mtx{0}$, $\|\mtx{X}_{ij}\|\leq R$ and
\[
\mtx{0}\preceq \sum_{1\leq i<j\leq n} \E\mtx{X}_{ij}^2 \preceq (n-1)\sigma^2 \mtx{I}_n.
\]
Applying Lemma \ref{teo:tropp} completes the proof.
\end{proof}

\bibliographystyle{plainnat_ini}
\bibliography{DCSBM_CMM}


\end{document}